\documentclass[a4paper,11pt]{amsart}
\usepackage[english]{babel}
\usepackage[utf8]{inputenc}
\usepackage[margin=2.5cm]{geometry}
\usepackage{hyperref}
\usepackage{amsmath}
\usepackage{amsthm}
\usepackage{amsfonts}
\hypersetup{colorlinks,urlcolor=blue,citecolor=lightblue,linkcolor=lightpink,
linktocpage,bookmarksnumbered}
\usepackage{graphicx}
\usepackage{import}
\usepackage{xifthen}
\usepackage{amsrefs}
\usepackage{enumerate}
\usepackage{pdfpages}
\usepackage{transparent}
\usepackage{parskip}
\usepackage{tikz-cd}
\usetikzlibrary{babel}
\usepackage{amssymb}
\usepackage{float}
\usepackage[capitalize]{cleveref}
\usepackage{adjustbox}

\docsvlist{N,Z,Q,R,C,F,P,RP,S,D,T,H,O}

\docsvlist{A,C,F,O,B,S,M,F,D,I,H,O}

\docsvlist{F,H,K,G,f,g,h,k}

\docsvlist{F,K,G,H,C,f,k,g,h,c,m,n,Q}

\newtheorem{teo}{\bfseries Theorem}[section]

\newtheorem{defi}[teo]{\bfseries Definition}
\newtheorem{lema}[teo]{\bfseries Lemma}
\newtheorem{prop}[teo]{\bfseries Proposition}
\newtheorem{cor}[teo]{\bfseries Corollary}

\theoremstyle{remark}
\newtheorem{remark}[teo]{\bfseries Remark}

\DeclareMathOperator{\res}{res}

\begin{document}

\definecolor{lightblue}{rgb}{0.35, 0.55, 1}
\definecolor{lightpink}{rgb}{1,0.3,0.6}

\title{The Coefficients of the $C_p$-Equivariant Geometric Complex Cobordism}
\author{Sebastian G\'omez Rend\'on}

\begin{abstract}
    We give a complete calculation of the cobordism ring of stably almost complex $C_p$-manifolds in terms of generators and relations. We also compare these generators with the geometrically-defined generators obtained by Kosniowski.
\end{abstract}

\maketitle

\begin{figure}[ht]
        \centering
        \includegraphics[width = 0.9\textwidth]{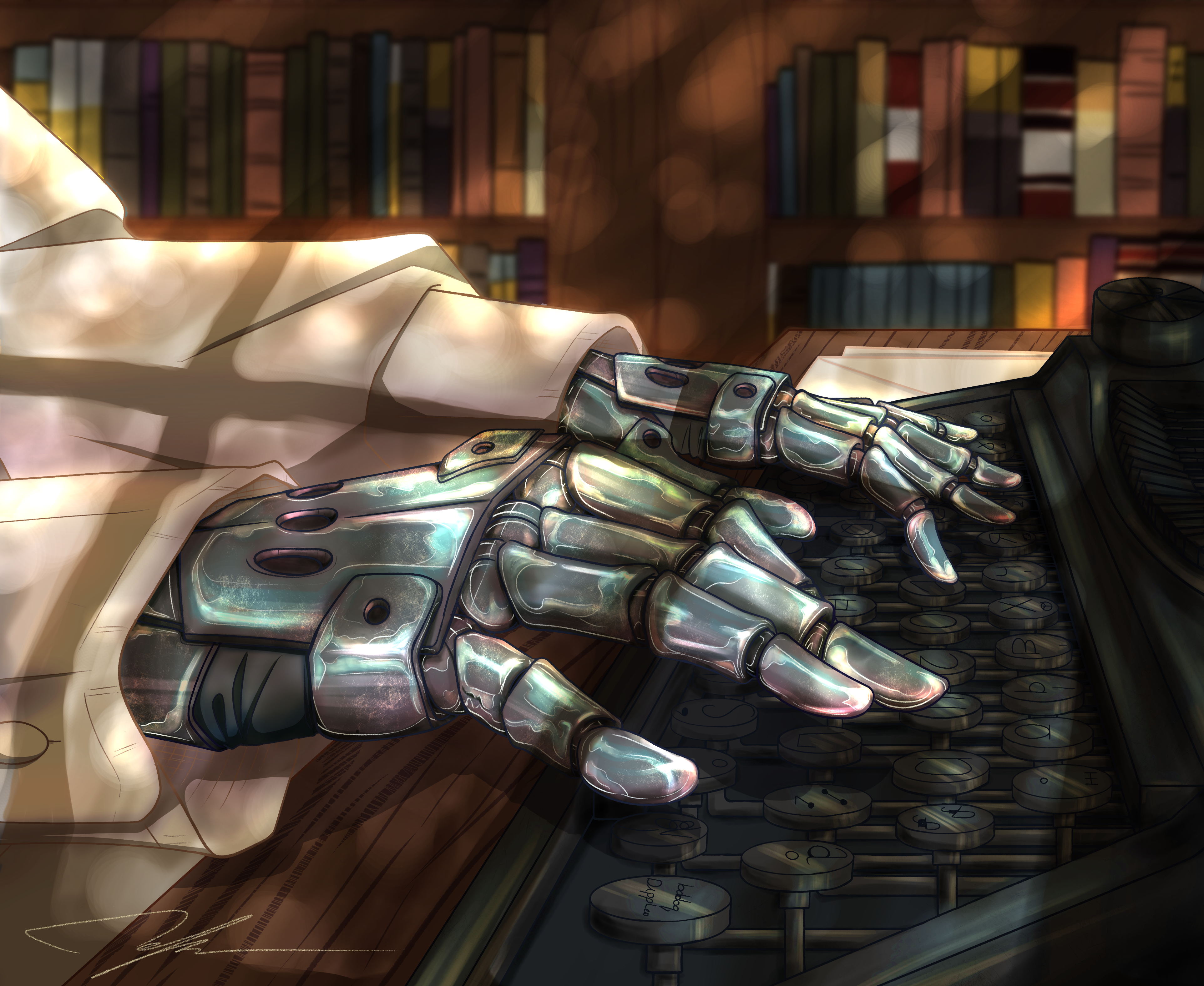}
        \caption{\textit{The Beauty in the Process of Creation}, Nicole Escanes Garc\'ia, 2026.}
\end{figure}

\section{Introduction}
For a compact Lie group $G$, a stably almost complex $G$-manifold $M$ is a closed smooth $G$-manifold such that its tangent bundle admits a complex structure after stabilization. The set of these manifolds, under the cobordism relation, forms $\Omega^{G,\text{geo}}_\ast$, the $G$-equivariant geometric complex cobordism ring. On the other hand, homotopical $G$-equivariant complex cobordism $MU^G$ was defined by tom Dieck \cite{dieck1970bordism} as a genuine $G$-spectrum. 

It is well known that the Pontrjagin-Thom map  $PT:\Omega^{G,\text{geo}}_\ast\to MU^G_\ast$ fails to be an isomorphism if $G\neq e$, since the usual transversality arguments fail in general for equivariant maps. The non-existence of an isomorphism between $\Omega^{G,\text{geo}}_\ast$ and $MU^G_\ast$ is even more fundamental as $MU^G$ has non-zero elements in negative degrees, namely the Euler classes of the irreducible complex $G$-representations. In \cite{comezana1996calculations}, Comeza\~{n}a showed that for $G$ abelian, the Pontrjagin-Thom map is injective, making $\Omega^{G,\text{geo}}_\ast$ an $MU_\ast$-subalgebra of $MU^G_\ast$.

Overall, the explicit structure of $\Omega^{G,\text{geo}}_\ast$ has not been well-understood. For $G=C_p$,  the
cyclic group of order $p$ where $p$ is any prime, Kosniowski \cite{kosniowski1976generators} found geometrically-defined generators for $\Omega^{G,\text{geo}}_\ast$ as an $MU_\ast$-algebra, without relations among them. More recently, Carlisle \cite{carlisle2022complex} described generators and relations for $\Omega^{C_2,\text{geo}}_\ast$ as an $MU_\ast$-algebra. The purpose of this paper is to give a complete calculation of $\Omega^{G,\text{geo}}_\ast$, in terms of generators and relations, for $G=C_p$. We achieve this by constructing a genuine $C_p$-spectrum $\Omega^{C_p}$ that represents $C_p$-equivariant geometric complex cobordism, that is we have an isomorphism $\Omega^{C_p,\text{geo}}_\ast\cong \Omega^{C_p}_\ast$. In fact, we can construct a genuine $G$-spectrum $\Omega^{G}$ such that $\Omega^{G,\text{geo}}_\ast\cong \Omega^{G}_\ast$ for $G$ finite abelian. This is analogous to a similar result for unoriented geometric cobordism, originally due to Wasserman \cite{wasserman1969equivariant}, and shown by Schwede \cite{schwede2018global} in the language of global spectra. 

The other key ingredient is an explicit presentation of $MU^{C_p}_\ast$ in terms of generators and relations, which was given by Strickland \cite{strickland} for $p=2$, and by Hu \cite{hu2025equivariant} for all $p$. This allows us to express $\Omega^{C_p}_\ast$ as a pullback of $MU^{C_p}_\ast$ and the geometric fixed points $\Phi^{C_p}\Omega^{C_p}_\ast$, $\Phi^{C_p}MU^{C_p}$. However, an alternative presentation of $MU^{C_p}_\ast$ is needed to take account of an additional involution in the geometric fixed points of the Pontrjagin-Thom map, which arises from interchanging the roles of the
stable normal bundle of a manifold $M$ and the normal bundle of the inclusion $M^G\to M$ \cite{dieck1970bordism}. We obtain this new presentation using methods similar to those of \cite{strickland} and \cite{hu2025equivariant}.

\textbf{Main result.} The following is the main result of this paper.

\textit{Theorem} \ref{thm:generators-geometric-cobordism}. For any prime $p$, as an $MU_\ast$-algebra, $\Omega^{C_p}_\ast$ has generators
    \[d_{l,j}^{(i)}, q_j \text{ with } |d_{l,j}^{(i)}|=2(l+j+1), |q_j|=2(j-1)\]
for $1\leq i\leq p-1$ and $0\leq l,j$, which are subject of the following relations
    \begin{align*}
        d^{(i)}_{l,j+1}(d^{(i')}_{k,s}-t^{(i')}_{k,s}) & =d^{(i')}_{k,s}(d^{(i)}_{l,j}-t^{(i)}_{l,j}) \\
    d^{(i)}_{l,j+1}(q_k-c_k) & =q_{k+1}(d^{(i)}_{l,j}-t^{(i)}_{l,j}) \\
    q_{j+1}(q_k-c_k) & =q_{k+1}(q_j-c_j) \\
    q_0 & =0\\
    d_{0,j}^{(1)} & =0 \text{ for all } j\geq 0.
    \end{align*}

Here, $c_k \in MU_\ast$ are the coefficients of the $p$-series $[p]u$ of the universal formal group law $F$, and $t_{l,j}^{(i)} \in MU_\ast$ is the coefficient of $x^{l}u^{j}$ in the series $(x +_F [i]u)^{-1}$ where the inverse is taken modulo $[p]u$. For $p = 2$, this is Theorem 3.1 in \cite{carlisle2022complex}. 

The organization of this paper is as follows. In Section \ref{section2}, we review
the basics of equivariant stably almost complex cobordism. We define the $C_p$-equivariant Thom spectrum $\Omega^{C_p}$ and show that its coefficient ring is isomorphic to $\Omega^{C_p,\text{geo}}_\ast$. We also establish the pullback diagram needed to calculate $\Omega^{C_p}_\ast$. In Section \ref{section:5}, we obtain an alternative
presentation of $MU^{C_p}_\ast$, which is better suited toward computing $\Omega^{C_p}_\ast$ as a $MU_\ast$-subalgebra. Section \ref{section4} states and proves the main theorem
that describes $\Omega^{C_p}_\ast$ explicitly. Finally, in Section \ref{section5}, we compare the
generators of $\Omega^{C_p}_\ast$ with those previously obtained by Kosniowski.

\textbf{Acknowledgments.} The author would like to thank Po Hu for suggesting this problem as well for numerous and valuable comments on both the organization and the mathematical content of this work.
\vspace{0.5 cm}

\section{Equivariant Complex Cobordism and Thom Spectra}\label{section2}
We review some standard definitions and results about equivariant complex cobordism. Let $G$ be a compact Lie group. We start with the definition of a stably almost complex $G$-manifold (see e.g \cite{hanke2005geometric})
\begin{defi}
Let $M$ be a closed smooth $G$-manifold. $M$ has a \textbf{stable almost complex} $G$-structure if there is an  equivariant bundle map
\[\phi: TM\oplus \underline{\R}^{k}\to TM\oplus \underline{\R}^{k}\]
for some $k\geq 0$, such that $\phi^2=-id_{TM\oplus \underline{\R}^{k}}$, where $\underline{\R}^{k}=M\times \R^k$ has trivial action on the fibers. A \textbf{stably almost complex} $G$-manifold is a closed smooth G-manifold together with a stable almost complex structure.
\end{defi}

It follows from this definition that the normal bundle $\nu(M^G\hookrightarrow M)$ has a complex $G$-bundle structure without requiring any stabilization. The definition of cobordism on stably almost complex $G$-manifolds follows the same lines as in the nonequivariant case.

\begin{defi}
Let $X$ be a $G$-space. The $n$-th \textbf{geometric $G$-equivariant complex cobordism} group of $X$, denoted by $\Omega^{G,\text{geo}}_n(X)$, is the group of cobordism classes of $n$-dimensional stably almost complex $G$-manifolds $[M\to X]$. 
\end{defi}

We recall now the definition of the homotopical $G$-equivariant complex cobordism spectrum as introduced in \cite{dieck1970bordism}. Recall that a complete complex universe of a compact Lie group $G$ is a complex $G$-representation $\mathcal{U}$ which contains infinitely many copies of each irreducible complex representation of $G$. If $\mathcal{U}$ is a complete complex $G$-universe, for each finite dimensional subrepresentation $V<_{\text{rep}} \mathcal{U}$ let
\[Gr^{\mathcal{U}}(V)=\{W< V\oplus \mathcal{U} \mid |W|_\C=|V|_\C\}\]
be the Grassmannian of all complex linear subspaces of $V\oplus \mathcal{U}$ of dimension $|V|_\C$. Define
\[MU^G(V)=\text{Th}(\xi^{\mathcal{U}}(V)\to Gr^{\mathcal{U}}(V))\]
where $\xi^{\mathcal{U}}(V)\to Gr^{\mathcal{U}}(V)$ is the tautological $|V|_\C$-dimensional complex bundle. Then $MU^G$ is defined as the $G$-spectrum given by the prespectrum consisting of the spaces $MU^G(V)$. 

We define another $G$-spectrum $\Omega^G$, which will represent the geometric $G$-equivariant complex bordism $\Omega^{G,\text{geo}}_\ast$. As with $Gr^\mathcal{U}(V)$, let 
\[Gr^{\C^\infty}(V)=\{W<V\oplus \C^\infty\mid |W|_\C=|V|_\C\}\]

\begin{defi}
For a finite dimensional subrepresentation $V<_{\text{rep}}\mathcal{U}$ let
\[\Omega^G(V)=\text{Th}(\xi^{\C^\infty}(V)\to Gr^{\C^\infty}(V))\]
where $\xi^{\C^\infty}(V)\to Gr^{\C^\infty}(V)$ is the tautological $|V|_\C$-dimensional complex bundle. Then  $\Omega^G$ is defined as the $G$-spectrum given by the prespectrum consisting of the spaces $\Omega^G(V)$.
\end{defi}
The inclusion of universes $j:\C^\infty \to \mathcal{U}$ induces a map of $G$-spectra
\begin{align*}\tag{1}\label{eq:01}
    j:\Omega^G\to MU^G
\end{align*}
We review now the construction of the Pontrjagin-Thom map $PT:\Omega^{G,\text{geo}}_\ast\to MU^G_\ast$ (see \cite{hanke2005geometric}). Given a class $[M]\in \Omega^{G,\text{geo}}_{2k}$, we can embed $M$ in a $G$-representation $V\oplus \C^{k}$. We can choose $V$ large enough so that the normal bundle $\nu_M^{V\oplus \C^k}$ has a complex structure. By collapsing the complement of the disc bundle of $\nu_M^{ V\oplus \C^k}$, we get a map
\[S^{V+2k}\to Th(\nu_M^{ V\oplus \C^k})\]
Consider the classifying map $\nu_M^{ V\oplus \C^k} \to \xi^{\C^k}(V)$, as well as the inclusions
\[\xi^{\C^k}(V)\to \xi^{\C^\infty}(V)\xrightarrow{j} \xi^{\mathcal{U}}(V)\]
given by the inclusions $\C^k\to \C^\infty\to \mathcal{U}$. By collapsing the complement of the disc bundles, we obtain maps
\[S^{V+2k}\to  Th(\nu_M^{ V\oplus \C^k})\to \text{Th}( \xi^{\C^\infty}(V))\to \text{Th}(\xi^{\mathcal{U}}(V)).\]
Then 
\[PT([M])=\text{colim}_V [S^{V+2k}\to\text{Th}(\xi^{\mathcal{U}}(V))]\in MU_{2k}^G.\]
From this, it is clear that $PT:\Omega^{G,\text{geo}}_\ast\to MU^G_\ast$ factors through $\Omega^G$
\[PT:\Omega^{G,\text{geo}}_\ast\to\Omega^G_\ast \to MU^G_\ast.\]
From now on, let $G$ be a finite abelian group, and let $\C_1,\ldots, \C_{n}$ be the nontrivial irreducible complex representations of $G$, which are all
1-dimensional. We recall the Euler class of the representation $\C_i$. Considering each $\C_i$ as a vector bundle over a point, we get the commutative diagram
\[\begin{tikzcd}\tag{2}\label{diagram:Euler-classes}
\C_i \arrow[r] \arrow[d] & \xi^{\mathcal{U}}(\C_i) \arrow[d] \\
\ast \arrow[r]           & Gr^{\mathcal{U}}(\C_i)           
\end{tikzcd}\]
Taking Thom spaces, we get a map $S^{\C_i}\to MU^G(\C_i)$ and by precomposing with the inclusion of the fixed points $S^0\to S^{\C_i}$ we get a map
\[S^0\to MU^G(\C_i)\]
This map defines a class $u_i\in MU^G_{-2}$ which is the Euler class of the representation $\C_i$. 

The following proposition is analogous to the calculation of $\Phi^{C_p}MU^{C_p}$ in \cite{kriz1999z}.

\begin{prop}\label{prop:geo-fixed-points}
    Let $G$ be a finite abelian group with $|G|=n+1$, then
    \begin{equation*}\tag{3}\label{eq:03}
        \Phi^{G}\Omega^{G} = \bigvee_{m_1,\ldots,m_{n}\geq 0} MU\wedge \Sigma^{2m_1}BU(m_1)_+\wedge\cdots\wedge \Sigma^{2m_{n}} BU(m_{n})_+.
    \end{equation*}
\end{prop}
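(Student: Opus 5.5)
Since $\Omega^G$ is given by the prespectrum $V\mapsto \Omega^G(V)$ indexed on a complete universe, I will compute $\Phi^G\Omega^G$ as the spectrum associated to the prespectrum of fixed points $V\mapsto \Omega^G(V)^G$. This is the standard description of geometric fixed points of a Thom-type prespectrum, as in the computation of $\Phi^{C_p}MU^{C_p}$ in \cite{kriz1999z}. Fixed points commute with Thom spaces in the sense that $\text{Th}(\xi)^G=\text{Th}(\xi^G|_{B^G})$. So the first step is to describe $Gr^{\C^\infty}(V)^G$ and the restriction of the tautological bundle to it.

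Write $V=\C^{a_0}\oplus \C_1^{a_1}\oplus\cdots\oplus\C_n^{a_n}$. Then $V\oplus\C^\infty\cong \C^{a_0+\infty}\oplus\bigoplus_i \C_i^{a_i}$. A $G$-invariant subspace $W$ of complex dimension $|V|_\C$ splits into isotypic pieces $W=W_0\oplus W_1\oplus\cdots\oplus W_n$, with $W_0\subset \C^{a_0}\oplus\C^\infty$ and $W_i\subset \C_i^{a_i}$. Hence
\[Gr^{\C^\infty}(V)^G=\coprod_{m_0+m_1+\cdots+m_n=|V|} Gr_{m_0}(\C^{a_0+\infty})\times Gr_{m_1}(\C^{a_1})\times\cdots\times Gr_{m_n}(\C^{a_n}).\]
Over each component, the tautological bundle is $\xi_{m_0}\oplus\xi_{m_1}\otimes\C_1\oplus\cdots$, and its fixed subbundle is just $\xi_{m_0}$. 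Therefore
\[\Omega^G(V)^G=\bigvee_{m_1,\ldots,m_n}\text{Th}\bigl(\xi_{m_0}\to Gr_{m_0}(\C^{a_0+\infty})\bigr)\wedge \prod_i Gr_{m_i}(\C^{a_i})_+,\]
with $m_0=|V|-\sum_i m_i$. This is where $\Omega^G$ differs from $MU^G$: the non-trivial isotypic parts only see the finite spaces $\C_i^{a_i}$ coming from $V$, not an infinite universe.

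Next I pass to the colimit over $V$ in the complete universe, letting $a_0$ and every $a_i$ go to infinity. The first factor, $\text{Th}(\xi_{m_0}\to BU(m_0))$, reindexed by $a_0$, assembles to $MU$. The degree bookkeeping comes from the fixed-point prespectrum being indexed on $V^G=\C^{a_0}$. Since $m_0=a_0+\sum_i(a_i-m_i)$, the Thom space of $\xi_{m_0}$ sits in a shift of $a_0$ by $2\sum_i(a_i-m_i)$ relative to the spectrum level. Reparametrizing by $k_i=a_i-m_i$, the codimension of $W_i$ in $\C_i^{a_i}$, the component contributes $\Sigma^{2\sum k_i}MU\wedge\prod_i Gr_{a_i-k_i}(\C^{a_i})_+$. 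Using $Gr_{a-k}(\C^a)\cong Gr_k(\C^a)$ via orthogonal complements and letting $a_i\to\infty$, this becomes $\Sigma^{2\sum k_i}MU\wedge\prod_i BU(k_i)_+$. Renaming $k_i$ as $m_i$ gives exactly \eqref{eq:03}.

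The main obstacle is this last step: checking that the structure maps of the fixed-point prespectrum are compatible with the indexing. Increasing $a_i$ by one sends a component with fixed $m_i$ to one with fixed codimension, not fixed $m_i$. I will first verify this explicitly for the structure maps $\Sigma^{\C_i}\Omega^G(V)\to\Omega^G(V\oplus\C_i)$, restricted to fixed points. On fixed points $S^{\C_i}$ becomes $S^0$, and the map on Grassmannians is $W\mapsto W\oplus\C_i$, which preserves codimension. This is what forces the indexing by codimension and the suspension $\Sigma^{2m_i}$. The structure maps for trivial summands $\C$ should simply give the $MU$ structure maps. Once these are checked, the colimit of the wedge decomposition is the wedge of the colimits, which finishes the proof.
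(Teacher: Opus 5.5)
Your proposal is correct and follows essentially the same route as the paper. You compute $\Phi^G\Omega^G$ as $\text{colim}_V\Sigma^{-V^G}\Omega^G(V)^G$, split the fixed Grassmannian isotypically so that only $W_0$ contributes to the fixed fibre, use $m_0=a_0+\sum_i(a_i-m_i)$ for the degree shift, and check that the $\mathbb{C}_i$-structure maps $W\mapsto W\oplus\mathbb{C}_i$ preserve codimension, so that orthogonal complements turn each Grassmannian colimit into $BU$ of the codimension. The paper does exactly this, and the only difference is cosmetic: you attach the total suspension $\Sigma^{2\sum_i k_i}$ to the $MU$ factor instead of distributing it over the $BU(m_i)_+$ factors, which is the same spectrum.
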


\begin{proof}
Since $\Omega^{G}$ is an inclusion prespectrum, we have
\[\Phi^G\Omega^G\simeq \text{colim}_V \Sigma^{-V^G} (\Omega^G(V))^G.\]
For a given finite dimensional subrepresentation $V<_{\text{rep}}\mathcal{U}$ of complex dimension $k$, we can write $V=V_0\oplus\cdots \oplus V_n$ with $V_i\cong \C_i^{l_i}\leq_{\text{rep}}\C_i^\infty$, and $k=l_0+\cdots +l_n$. In particular, $V_0=V^G$. On the base space, $(Gr^{\C^\infty}(V))^G$ is the subspace of $Gr^{\C^\infty}(V)$ consisting of all subrepresentations of $V\oplus \C^\infty$. For such a subrepresentation $W$, we can write $W=W_0\oplus \cdots \oplus W_n$ with $W_i\cong \C_i^{k_i}$ for some $k_i\geq 0$. Thus, $W_i\subset V_i$ for $1\leq i\leq n$ and $W_0=W^G\subset (V\oplus \C^\infty)^G=V_0\oplus \C^\infty$.
Note that a subrepresentation of $V_i$ is the same as a $\C$-linear subspace of $V_i$. Hence, 
\begin{align*}
    (Gr^{\C^\infty}(V))^G= \bigvee_{k_0+\cdots +k_n=k} Gr(k_0,V_0\oplus\C^\infty)_+
    \wedge Gr(k_1,V_1)_+ \wedge \cdots \wedge Gr(k_n,V_n)_+.
\end{align*}
where $k_0 \geq 0$, and $0 \leq k_i \leq l_i$ for $1 \leq i \leq n$. For $W\in (Gr^{\C^\infty}(V))^G$, the fixed points in the fiber of $\xi^{\C^\infty}(V)$ over $W$ consist of $(w,W)$ with $w\in W_0$. Therefore,
\begin{align*}
    \Sigma^{-V^G}(\Omega^G(V))^G= \bigvee_{k_0+\cdots +k_n=k} \Sigma^{-2l_0}MU(k_0) \wedge Gr(k_1,V_1)_+\wedge\cdots \wedge Gr(k_n,V_n)_+.
\end{align*}
Since $\sum l_i=\sum k_i=k$, we get $l_0=k_0+\sum (k_i-l_i)$. Thus, taking the colimit over $V$ in the formula above, we get
\begin{equation*}
     \Phi^G \Omega^G\simeq \text{colim}_V \bigvee_{k_0+\cdots +k_n=k} \Sigma^{-2k_0} MU(k_0) \wedge \Sigma^{2(l_1-k_1)} Gr(k_1,V_1)_+\wedge\cdots \wedge \Sigma^{2(l_n-k_n)}Gr(k_n,V_n)_+.
\end{equation*}
Now we identify the maps in the colimit. Given an inclusion $V\hookrightarrow V'$, the map $\Sigma^{-V}(\Omega^G(V))^G\to \Sigma^{-V'}(\Omega^G(V'))^G$
comes from desuspending the map resulting from taking fixed points and one point compactification of the map
\begin{align*}   \tag{4}\label{eq:inclusions}
    \xi^{\C^\infty}(V)\to \xi^{\C^\infty}(V') \\
    (w,W)\mapsto ((w,0),W\oplus (V'-V)).
\end{align*}
It suffices to consider the case $V'=V\oplus \C_i$ with $0\leq i\leq n$. As before, we write $V'=V_0'\oplus\cdots \oplus V_n'$ with $V_i'\cong \C_i^{l'_i}$. Thus, $l'_j=l_j$ for $j\neq i$ and $l'_i=l_i+1$. For the case $V'=V\oplus \C_0$, after taking fixed points and compactification, the map \eqref{eq:inclusions} sends the wedge summand corresponding to the partition $k_0,\ldots,k_n$ of $|V|_\C$ to the wedge summand corresponding to the partition $k_0+1,k_1,\ldots, k_n$ of $|V'|_\C$. On such summands, this map on the zeroth term of the smash product is given by the analog of \eqref{eq:inclusions}
\[\Sigma^{-2k_0}MU(k_0)\to \Sigma^{-2(k_0+1)}MU(k_0+1)\]
and is the identity in the other smash product factors. For the case $V'=V\oplus \C_i$ for fixed $1\leq i\leq n$, the map in the colimit sends the wedge summand corresponding to the partition $k_0,\ldots,k_i,\ldots k_n$ of $|V|_\C$ to the wedge summand corresponding to $k_0,\ldots,k_i+1,\ldots, k_n$ of $|V'|_\C$. On such summands, this map on the $i$-th term of the smash product is given by the inclusion
\[\Sigma^{2(l_i-k_i)}Gr(k_i,V_i)\to \Sigma^{2(l_i+1-k_i-1)}Gr(k_i+1,V_i\oplus \C_i)\]
and is the identity map in the other smash product factors. In particular, each $l_i-k_i$ is constant through the colimit.

Recall that for an $l$-dimensional complex inner product space $V$, and for $k\leq l$ there is a natural homeomorphism
\begin{align*} \tag{5}\label{eq:inverse-H-space}
     i_{k,l}: Gr(k,V)\to Gr(l-k,V) \\
    W\mapsto V-W
\end{align*}
Using these maps in our context, we can write $\Phi^G\Omega^G$ as
\begin{equation*}
     \text{colim}_V\bigvee_{k_0+\cdots +k_n=k} \Sigma^{-2k_0} MU(k_0) \wedge \Sigma^{2(l_1-k_1)} Gr(l_1-k_1,V_1)_+\wedge\cdots \wedge \Sigma^{2(l_n-k_n)}Gr(l_n-k_n,V_n)_+. 
\end{equation*}
Setting $l_i-k_i=m_i$, we get the desired result.
\end{proof}

The geometric fixed points of $\Omega^{G,\text{geo}}_\ast$, on the other hand, are given by the following well known result.

\begin{lema}\cite{comezana1996calculations}*{Proposition 4.2}\label{lema:identify-geo-fixed-points}
    For any finite abelian group $G$ with $|G|=n+1$ we have
    \[\Phi^G \Omega^{G,\text{geo}}\cong  \bigoplus_{m_1,\ldots,m_{n}\geq 0}MU_{\ast-2(m_1+\ldots+m_{n})}( BU(m_1)\times\cdots\times BU(m_n)).\]
\end{lema}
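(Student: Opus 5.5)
The plan is to identify the geometric fixed points of geometric bordism with a bordism group of fixed-point data. This is the classical tom Dieck–Conner–Floyd localization, and the proof is geometric.

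\textbf{Step 1: interpret the left side.} First I make precise what $\Phi^G\Omega^{G,\text{geo}}_\ast$ means. Following \cite{comezana1996calculations}, it is the localization of $\Omega^{G,\text{geo}}_\ast$ obtained by inverting the Euler classes of the nontrivial representations. Geometrically, this is bordism of stably almost complex $G$-manifolds $M$ with boundary, where $G$ acts freely on $\partial M$, or more generally without fixed points on $\partial M$; one uses the family of proper subgroups and iterates over it. The $G$-manifold with boundary is taken up to the evident relative cobordism. I will use the standard exact sequence of the pair of families $(\mathcal{F}_{\text{all}},\mathcal{F}_{\text{proper}})$. Its relative term is bordism of $G$-manifolds with no fixed points on the boundary.

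\textbf{Step 2: restrict to fixed points.} Next I build a map from this relative bordism group to the bordism of fixed-point data. Send $M$ to its fixed set $M^G$, a closed manifold since $\partial M$ has no fixed points, together with its normal bundle $\nu=\nu(M^G\hookrightarrow M)$. As remarked after the definition of stable almost complex $G$-structure, $\nu$ is a genuine complex $G$-bundle over a trivial $G$-space. It therefore splits canonically into isotypic pieces $\nu=\nu_1\oplus\cdots\oplus\nu_n$, where $\nu_i\cong E_i\otimes\C_i$ for complex vector bundles $E_i$ of dimension $m_i$. The tangent bundle of $M^G$ inherits a stable almost complex structure, namely the $G$-fixed part of $TM$. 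So each component of $M^G$ determines a class in $MU_{\ast-2\sum m_i}(BU(m_1)\times\cdots\times BU(m_n))$, given by the classifying maps of the $E_i$. Cobordisms restrict to cobordisms of fixed sets, so the map is well defined.

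\textbf{Step 3: construct the inverse.} The inverse takes $(F,E_1,\dots,E_n)$ to the disk bundle $D(\bigoplus E_i\otimes\C_i)$ over $F$. Its boundary, the sphere bundle, has no $G$-fixed points because every $\C_i$ is nontrivial. Composing in one direction gives the identity on fixed-point data immediately. For the other direction, $M$ is cobordant, relative to fixed-point-free boundaries, to a tubular neighborhood of $M^G$. To see this, take $M\times[0,1]$ and remove an open tubular neighbourhood of $M^G\times\{1\}$; what remains of $M$ has no fixed points.

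\textbf{Main obstacle.} I expect the hardest step to be the careful bookkeeping in Step 1. The issue is that the localized ring really coincides with the relative bordism group with fixed-point-free boundaries, and that the stable almost complex structures are compatible on the pieces. This is the content of \cite{comezana1996calculations}*{Proposition 4.2}, which we are permitted to quote, so Steps 2 and 3 give the isomorphism with the stated grading shift of $2(m_1+\cdots+m_n)$.
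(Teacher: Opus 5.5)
The paper gives no argument for this lemma beyond the citation to Comeza\~na. Your Steps 2 and 3 are the classical Conner--Floyd/tom Dieck argument behind that citation, and they are correct. You send $M$ to $M^G$ with its isotypically split normal bundle. You invert this with the disk bundle $D(\bigoplus_i E_i\otimes\mathbb{C}_i)$. You use $M\times[0,1]$ to show that $M$ is relatively bordant to a tubular neighbourhood of $M^G$.

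Two small precisions for the structures. The stable structure on $M^G$ should come from $(TM\oplus\underline{\mathbb{R}}^k)^G|_{M^G}=TM^G\oplus\underline{\mathbb{R}}^k$, not from ``the fixed part of $TM$''. This summand is $\phi$-invariant because $\phi$ is equivariant and the stabilizing summand carries the trivial action. The resulting splitting $\phi=\phi_0\oplus\phi_\nu$, together with homotopy invariance over the disk bundle, is all you need for the compatibility of structures on the pieces.

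The real problem is Step 1 and your closing paragraph. $\Phi^G\Omega^{G,\text{geo}}_\ast$ is \emph{not} the localization of $\Omega^{G,\text{geo}}_\ast$ at Euler classes.
\begin{itemize}
\item The Euler classes $u_i$ live in $MU^G_{-2}$ and do not lie in geometric bordism at all, since $\Omega^{G,\text{geo}}_{-2}=0$.
\item Inverting them inside $MU^G_\ast$ gives $\Phi^GMU^G_\ast=MU_\ast[u_i^{\pm1},u_i^{-1}b_l^{(i)\prime}]$, which has nonzero classes in negative degrees.
\item The right-hand side of the lemma is concentrated in degrees $\geq 0$. It corresponds to $MU_\ast[u_i^{-1},u_i^{-1}b_l^{(i)\prime}]$, which contains $u_i^{-1}$ but not $u_i$.
\end{itemize}
So the identification you call the main obstacle is false. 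Deferring it to Comeza\~na's Proposition 4.2 would in any case be circular, since that proposition is exactly the lemma you are proving.

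The fix is to take the geometric description as the definition: $\Phi^G\Omega^{G,\text{geo}}_\ast:=\Omega^{G,\text{geo}}_\ast(\widetilde{E\mathcal{P}})$. This is the relative bordism of $G$-manifolds whose boundary has no $G$-fixed points, for $\mathcal{P}$ the family of proper subgroups. It is also the form in which the paper uses the object, as the third term of the isotropy separation sequence in the five-lemma argument for Theorem~\ref{teo:PT-iso}. With that definition nothing is left to quote, and Steps 2--3 are a complete proof.

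For arbitrary finite abelian $G$ the boundary condition must be ``no $G$-fixed points'', not ``free''. The pair (all, free) would also detect non-free, fixed-point-free pieces such as $G/H$ for nontrivial proper $H$. No iteration over families is needed.
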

Recall that there is a map $\iota: BU\to BU$ which classifies the stable additive inverse of a bundle. By an abuse of notation we also denote by $\iota:BU_+\wedge \ldots\wedge BU_+\to BU_+\wedge \ldots\wedge BU_+$ the map which is the $n$-fold product of the map $\iota: BU\to BU$. Recall that the inclusion of universes $\C^\infty\to \mathcal{U}$ induces a map $j:\Omega^G\to MU^G$ of $G$-spectra \eqref{eq:01}. 

\begin{lema}\label{lema:geo-fix-j}
    The map induced on geometric fixed points $\Phi^G j_\ast: \Phi^G\Omega^G_\ast \to \Phi^G MU^G_\ast$ is
    \[\Phi^G j_\ast= \iota_\ast\circ k_\ast\]
    where $k_\ast:\Phi^G\Omega^G_\ast \to \Phi^G MU^G_\ast$ is the map induced from the inclusion 
    \[BU(m_1)_+\wedge\ldots\wedge BU(m_n)_+\to BU_+\wedge \ldots\wedge BU_+\]
\end{lema}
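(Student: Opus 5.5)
We compute $\Phi^G j$ at the level of the prespectra, following the proof of Proposition \ref{prop:geo-fixed-points} and running the same analysis for $MU^G$. The map $j$ is levelwise the map of Thom spaces $\mathrm{Th}(\xi^{\C^\infty}(V))\to \mathrm{Th}(\xi^{\mathcal U}(V))$ induced by $\C^\infty\subset\mathcal U$. Both spectra are inclusion prespectra, so $\Phi^G j$ is the colimit over $V$ of the desuspended fixed-point maps $\Sigma^{-V^G}(\Omega^G(V))^G\to\Sigma^{-V^G}(MU^G(V))^G$.

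\textbf{Step 1: the fixed points of $Gr^{\mathcal U}(V)$.} Write $\mathcal U=\bigoplus_{i=0}^n \C_i^\infty$ and $V=V_0\oplus\cdots\oplus V_n$ as before. A fixed point $W$ decomposes as $W=\bigoplus W_i$ with $W_i\subset V_i\oplus\C_i^\infty$. So
\[(Gr^{\mathcal U}(V))^G=\coprod_{k_0+\cdots+k_n=k} Gr(k_0,V_0\oplus\C^\infty)\times\prod_{i\ge1}Gr(k_i,V_i\oplus\C_i^\infty).\]
The fixed part of the tautological bundle is again the bundle $W_0$. After desuspending by $V_0$ and passing to the colimit, this gives the Kriz-type description
\[\Phi^GMU^G\simeq\bigvee MU\wedge\prod_i \Sigma^{2(l_i-k_i)}BU(k_i)_+ ,\]
which stabilises in $l_i\to\infty$ to $MU\wedge (\Z\times BU)_+^{\wedge n}$ indexed by $m_i=l_i-k_i\in\Z$. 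Here the $i$-th factor is the colimit over $V_i$ of $Gr(k_i,V_i\oplus\C_i^\infty)$, and the bonding maps are $W_i\mapsto W_i\oplus\C_i$. This identifies the factor with $BU$ via the virtual bundle $W_i-V_i$, of virtual dimension $-m_i$.

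\textbf{Step 2: compare the two descriptions.} Under $j$, the summand for $(k_0,\dots,k_n)$ maps to the same-indexed summand. On the $i$-th factor the map is the inclusion $Gr(k_i,V_i)\hookrightarrow Gr(k_i,V_i\oplus\C_i^\infty)$, and on the $MU(k_0)$ factor it is the identity. In Proposition \ref{prop:geo-fixed-points}, however, the target summand $BU(m_i)$ was obtained through the homeomorphism $i_{k_i,l_i}$ of \eqref{eq:inverse-H-space}, that is, by recording the complement $V_i-W_i$. Hence on the $i$-th factor the composite
\[Gr(m_i,V_i)\xrightarrow{i^{-1}}Gr(k_i,V_i)\to Gr(k_i,V_i\oplus\C_i^\infty)\]
sends the bundle $E=V_i-W_i$ to $W_i=V_i-E$. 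As a stable class in $BU$ (after subtracting the trivial $V_i$) this is $-E$. So the map is the inclusion $k\colon BU(m_i)\to BU$ followed by $\iota$, and taking the product over $i$ gives $\Phi^Gj_\ast=\iota_\ast\circ k_\ast$.

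\textbf{Main obstacle.} The delicate point is that these identifications are compatible with the colimit structure maps. The stable identification of $\mathrm{colim}_{V_i}Gr(k_i,V_i\oplus\C_i^\infty)$ with $BU$ must be made coherently, so that the map $W\mapsto W-V_i$ commutes up to homotopy with the bonding maps on both sides. The suspension coordinates $\Sigma^{2m_i}$ must also match. I would handle this by working with classifying maps: I would check that the class in $\widetilde K$ of $V_i-W_i$ on the source and of $W_i-V_i$ on the target is preserved by adding $\C_i$. I would also use that the grading shift $\Sigma^{2m_i}$ is the same on both sides, since $l_i-k_i$ is constant through the colimit, as noted in the proof of Proposition \ref{prop:geo-fixed-points}.
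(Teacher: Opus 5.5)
Your proposal is correct and is essentially the paper's argument. You compute $\Phi^G j$ levelwise on fixed points, where it is the identity on $MU(k_0)$ and the inclusion $Gr(k_i,V_i)\hookrightarrow Gr(k_i,V_i\oplus\mathbb{C}_i^\infty)$ on the other factors, and $\iota$ appears precisely because $\Phi^G\Omega^G$ was identified through the complement homeomorphism $i_{k,l}$ of \eqref{eq:inverse-H-space} while $\Phi^G MU^G$ is identified through $W_i$ itself. Your direct computation of the composite $E\mapsto V_i-E$, together with your check that the classes $V_i-W_i$ and $W_i-V_i$ are unchanged by the bonding maps $W_i\mapsto W_i\oplus\mathbb{C}_i$, makes explicit what the paper's commutative square leaves implicit.
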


\begin{proof}
    By \cite{dieck1970bordism} (see also \cite{kriz1999z}*{(2.9)}) we have
    \begin{equation*}\tag{6}\label{eq:6}
        \Phi^G MU^G =\bigvee_{m_1,\ldots,m_{n}\in \Z} MU\wedge \Sigma^{2m_1}BU_+\wedge\cdots\wedge \Sigma^{2m_{n}} BU_+.
    \end{equation*}
    In particular, for a $k$-dimensional complex representation $V$ we have
    \[\scalebox{0.95}{$\displaystyle \Sigma^{-V^G}(MU^G(V))^G= \bigvee_{k_0+\ldots+k_n=k}MU(k_0)\wedge \Sigma^{2(l_1-k_1)}Gr(k_1,V_1\oplus \C^\infty_1)_+\wedge\ldots\wedge \Sigma^{2(l_n-k_n)}Gr(k_n,\C^\infty_n)_+.$}\]
where $k_0,\ldots,k_n\geq 0$. By Proposition \ref{prop:geo-fixed-points}, we have a map $(\Omega^G(V))^G\to (MU^G(V))^G$ which in the first smash factor is the identity $MU(k_0)\to MU(k_0)$, and on the other smash factors is the inclusion $Gr(k_i,V_i)\to Gr(k_i,\C^\infty_i)$. Recall that we used the homeomorphisms $i_{k,l}$ \eqref{eq:inverse-H-space} to identify $\Phi^G\Omega^G$. Taking them into account, we obtain the following commutative diagram
\begin{center}
\begin{adjustbox}{width=\textwidth,center}
\begin{tikzcd}
\bigvee MU(k_0)\wedge Gr(k_1,V_1)_+\wedge\ldots\wedge Gr(k_n,V_n)_+ \arrow[d] \arrow[r] &
\bigvee MU(k_0)\wedge Gr(l_1-k_1,V_1)_+\wedge\ldots\wedge Gr(l_n-k_n,V_n)_+ \arrow[d] \\
\bigvee MU(k_0)\wedge Gr(k_1,\C^\infty_1)_+\wedge\ldots\wedge Gr(k_n,\C^\infty_n)_+ \arrow[r] &
\bigvee MU(k_0)\wedge Gr(l_1-k_1,\C^\infty_1)_+\wedge\ldots\wedge Gr(l_n-k_n,\C^\infty_n)_+
\end{tikzcd}
\end{adjustbox}
\end{center}
After taking desuspensions and passing to the colimit, the left vertical map corresponds to $k_\ast$, the bottom map is $\iota_\ast$, and the right vertical map is $\Phi^G j_\ast$.
\end{proof}

Recall that $MU_\ast(BU(1)_+)=MU_\ast\{b_0',b_1',b_2',\ldots\}$ with $|b_l'|=2l$ and $b_0'=1$. Hence, in \eqref{eq:03} for each $1\leq i\leq n$ and $m_i\geq 0$, $MU_\ast(\Sigma^{2m_i}BU(m_i))$ is the $MU_\ast$-module generated by the degree $m_i$ monomials in $u_i^{-1}b_0',u_i^{-1}b_1',\ldots$ where $u^{-1}_i$ is the inverse of the Euler class \eqref{diagram:Euler-classes}. Let $b_l^{(i)\prime}$ denote the image of $b_l'$ under the inclusion of $BU(1)$ into the $i$-th copy of $BU$ in \eqref{eq:6}. We get
\[\Phi^G\Omega^G_\ast=MU_\ast[u_i^{-1},u_i^{-1}b_l^{(i)\prime}\mid 1\leq l, 1\leq i\leq n] \text{ with } |u_i^{-1}|=2 \text{, } |b_l^{(i)\prime}|=2l.\]
Similarly,
\[\Phi^GMU^G_\ast=MU_\ast[u_i^{\pm 1}, u_i^{-1}b_{l}^{(i)\prime}\mid 1\leq l, 1\leq i\leq n] \text{ with } |u_i|=-2 \text{, }|b_l^{(i)\prime}|=2l\]
Note that $\Phi^G \Omega^G_\ast$ and $\Phi^G\Omega^{G,\text{geo}}_\ast$ have the same presentation as $MU_\ast$-algebras. We still need to see that $\Phi^G PT: \Phi^G\Omega^{G,\text{geo}}_\ast \to \Phi^G \Omega^G_\ast$ is the identity map.

\begin{prop}\label{prop: commutative-diagram-geo-fixed-pts}
For a finite abelian group $G$, the following diagram is commutative 
   \[\begin{tikzcd}
{\Omega^{G,\text{geo}}_\ast} \arrow[r, "{\Phi_{\Omega^{G,\text{geo}}}}"] \arrow[d, "PT"'] & {MU_\ast[u_i^{-1},u_i^{-1}b_l^{(i)\prime}]} \arrow[d, "\Phi^G PT=id"]          \\
\Omega^{G}_\ast \arrow[d, "j_\ast"'] \arrow[r, "\Phi_{\Omega^{G}}"]                       & {MU_\ast[u_i^{-1},u_i^{-1}b_l^{(i)\prime}]} \arrow[d, "\iota_\ast\circ k_\ast"] \\
MU^G_\ast \arrow[r, "\Phi_{MU^{G}}"]                                                      & {MU_\ast[u_i^{\pm 1},u_i^{-1}b_l^{(i)\prime}]}                             
\end{tikzcd}\tag{7}\label{diagram:big-square}\]
The horizontal maps are the canonical maps for each spectrum into its geometric fixed points, and $k_\ast:MU_\ast[u_i^{-1},u_i^{-1}b_l^{(i)\prime}]\to MU_\ast[u_i^{\pm1},u_i^{-1}b_l^{(i)\prime}] $ is the inclusion. Moreover, $\Phi^G PT =id$.
\end{prop}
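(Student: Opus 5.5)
The plan is to treat the two squares of \eqref{diagram:big-square} separately, and then identify the upper right vertical map with the identity.

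\textbf{Lower square.} The horizontal maps $\Phi_{\Omega^G}$ and $\Phi_{MU^G}$ are the components of the natural transformation from $G$-fixed points $(-)^G$ to geometric fixed points $\Phi^G(-)$, evaluated on homotopy groups. For a map of $G$-spectra $j:\Omega^G\to MU^G$, naturality gives $\Phi_{MU^G}\circ j_\ast=\Phi^G j_\ast\circ \Phi_{\Omega^G}$. By Lemma \ref{lema:geo-fix-j}, $\Phi^G j_\ast=\iota_\ast\circ k_\ast$, which gives commutativity of the lower square. In the polynomial presentations, $k_\ast$ is the inclusion inverting the $u_i$, since the summands $\Sigma^{2m_i}BU(m_i)_+$ include into $\Sigma^{2m_i}BU_+$ with $m_i\ge 0$.

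\textbf{Upper square.} Here $\Phi_{\Omega^{G,\text{geo}}}$ is the geometric fixed point map of Lemma \ref{lema:identify-geo-fixed-points}. It sends $[M]$ to $\sum_F [F\to \prod_i BU(m_i)]$, summing over components $F$ of $M^G$, where the normal bundle $\nu(M^G\hookrightarrow M)$ splits into isotypical pieces $\nu_i$ of rank $m_i$, and $\nu_i$ is classified in $BU(m_i)$. I would take $\Phi^G PT$ to be defined as the map making this square commute, namely $\Phi_{\Omega^G}\circ PT$ expressed via the identification. The task is then to check on geometric representatives that this composite agrees with $\Phi_{\Omega^{G,\text{geo}}}$ under the isomorphism of Lemma \ref{lema:identify-geo-fixed-points} with Proposition \ref{prop:geo-fixed-points}.

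To do this, I would compute $\Phi_{\Omega^G}(PT[M])$ concretely. Embed $M\subset V\oplus\C^k$ with complex normal bundle $\nu$. Take $G$-fixed points of the collapse map $S^{V+2k}\to \mathrm{Th}(\nu)\to \Omega^G(V)$. This gives $S^{V^G+2k}\to \mathrm{Th}(\nu|_{M^G})^G=\mathrm{Th}(\nu^G|_{M^G})$, which is the nonequivariant Pontrjagin--Thom collapse for $M^G\subset V^G\oplus\C^k$. Its composite with the classifying map lands in the wedge summand of Proposition \ref{prop:geo-fixed-points} indexed by the ranks $k_i$ of the isotypical pieces $\nu_i$, with $\nu_i\subset V_i$. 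Hence the resulting class is $[M^G]$ together with the classes of $\nu_i\to Gr(k_i,V_i)$.

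Next I would apply the homeomorphisms $i_{k,l}$ from \eqref{eq:inverse-H-space} used in Proposition \ref{prop:geo-fixed-points}. These replace $\nu_i$ by its complement $V_i-\nu_i$ in $V_i$. Because $TM\oplus\nu=V\oplus\C^k$ restricted to $M^G$, comparing isotypical parts for $i\ge 1$ shows that $V_i-\nu_i\cong (TM|_{M^G})_i=\nu(M^G\hookrightarrow M)_i$. The rank of this piece is $l_i-k_i=m_i$, which matches the indexing. Thus the image is $\sum_F[F\to \prod BU(m_i)]$ classifying the normal bundle of the fixed set, which is exactly $\Phi_{\Omega^{G,\text{geo}}}[M]$. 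So $\Phi^G PT=\mathrm{id}$.

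\textbf{Main obstacle.} The delicate step is this last identification. It requires checking that the complex structures agree, including the stabilization issue. One must also check that the suspension coordinates $\Sigma^{2m_i}$, equivalently the powers $u_i^{-1}$, are matched consistently with the degree shift in Lemma \ref{lema:identify-geo-fixed-points}. This amounts to showing that both sides use the same Thom isomorphism conventions.
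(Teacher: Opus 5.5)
Your proposal is correct, but for the upper square it follows a different route from the paper. The lower square is handled the same way in both, via naturality and Lemma \ref{lema:geo-fix-j}.

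For the upper square, the paper computes nothing. It cites \cite{dieck1970bordism}*{Proposition 4.1} for two facts: the outer square commutes, and the whole right-hand vertical composite is $\iota_\ast\circ k_\ast$. Combining this with the lower square gives $\iota_\ast\circ k_\ast=\iota_\ast\circ k_\ast\circ\Phi^G PT$. The paper then cancels $\iota_\ast\circ k_\ast$, which is injective since $\iota_\ast$ is an involution and $k_\ast$ an inclusion.

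You instead take $G$-fixed points of the Pontrjagin--Thom collapse. You then push the result through the identifications of Proposition \ref{prop:geo-fixed-points}, using that the complement homeomorphisms \eqref{eq:inverse-H-space} turn the isotypical pieces $\nu_i$ of the stable normal bundle into $\nu(M^G\hookrightarrow M)_i$. This is essentially a self-contained proof of the part of tom Dieck's result that is needed, and it requires no cancellation. It also shows why $\iota$ appears in Lemma \ref{lema:geo-fix-j} but not in the upper square: complementation is already built into the identification of $\Phi^G\Omega^G$, but not into that of $\Phi^G MU^G$. The cost is that you must check by hand the complex structures on the nontrivial isotypical parts and the Thom class and suspension conventions, which the paper delegates to the citation.

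One point to tighten: you cannot define $\Phi^G PT$ as ``the map making the square commute'', because $\Phi_{\Omega^{G,\text{geo}}}$ is not surjective. For $G=C_p$, a point with normal representation $\mathbb{C}_i$ is not in its image. The boundary map of the isotropy separation sequence sends it to the free circle $S(\mathbb{C}_i)$, which is nonzero in $\Omega^{C_p,\text{geo}}_1((EC_p)_+)\cong MU_1(BC_p)\cong\mathbb{Z}/p$.

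So your computation proves that the diagram commutes with the identity in the upper right corner. The Five Lemma argument for Theorem \ref{teo:PT-iso} needs more: that the map actually induced by $PT$ on geometric fixed points is the identity. The paper extracts this from the citation. To get it yourself, run the same collapse computation for $G$-manifolds whose boundary has no $G$-fixed points, e.g.\ the disk bundle of a fixed-point datum $(F,\nu_1,\ldots,\nu_n)$. These represent every class of $\Phi^G\Omega^{G,\text{geo}}_\ast$, and the boundary disappears on passing to fixed points, so your argument goes through unchanged.
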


\begin{proof}
The lower diagram commutes by Lemma \ref{lema:geo-fix-j}. In \cite{dieck1970bordism}*{Proposition 4.1} it is proven that the outer square commutes, and that the right vertical composition equals $\iota_\ast\circ k_\ast$. Therefore, we get $\iota_\ast\circ k_\ast=\iota_\ast\circ k_\ast\circ \Phi^G PT$. Since $\iota_\ast$ is an involution and $k_\ast$ is the inclusion we conclude that  $\Phi^G PT =id$.
\end{proof}
From now on, we restrict to the case $G=C_p$. The isotropy separation sequence in this case is given by
\[(EC_{p})_+\to S^0\to \widetilde{EC_{p}}.\]
Here $\widetilde{EC_{p}}$ denotes the unreduced suspension. We obtain the following diagram,
 \[\begin{tikzcd}
\Omega^{C_p,\text{geo}}_\ast(EC_p)_+ \arrow[r] \arrow[d, "PT(EC_p)_+"'] & \Omega^{C_p,\text{geo}}_\ast \arrow[r] \arrow[d, "PT"'] & {MU_\ast[u_i^{-1},u_i^{-1}b_l^{(i)\prime}]} \arrow[d, "id"] \\
\Omega^{C_p}_\ast(EC_p)_+ \arrow[r]                                  & \Omega^{C_p}_\ast \arrow[r]                          & {MU_\ast[u_i^{- 1},u_i^{-1}b_l^{(i)\prime}]}                                 
\end{tikzcd}\]
By \cite{dieck1970bordism}, transversality holds for free manifolds. So the left vertical map in the diagram above is an isomorphism. By the Five Lemma, we get
\begin{teo}\label{teo:PT-iso}
     The Pontrjagin-Thom map $PT:\Omega^{C_p,\text{geo}}_\ast\to \Omega^{C_p}_\ast$ is an isomorphism.
\end{teo}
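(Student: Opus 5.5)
The plan is to compare the isotropy separation long exact sequences for the two theories and apply the Five Lemma. Smashing the cofiber sequence $(EC_p)_+\to S^0\to \widetilde{EC_p}$ with $\Omega^{C_p}$ and taking $C_p$-fixed homotopy gives a long exact sequence
\[\cdots\to \Omega^{C_p}_\ast(EC_p)_+\to \Omega^{C_p}_\ast\to (\widetilde{EC_p}\wedge\Omega^{C_p})^{C_p}_\ast\to \Omega^{C_p}_{\ast-1}(EC_p)_+\to\cdots\]
For $G=C_p$, the only proper subgroup is trivial, so $(\widetilde{EC_p}\wedge\Omega^{C_p})^{C_p}\simeq \Phi^{C_p}\Omega^{C_p}$. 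By Proposition \ref{prop:geo-fixed-points}, this has coefficients $MU_\ast[u_i^{-1},u_i^{-1}b_l^{(i)\prime}]$.

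On the geometric side I will use the corresponding Conner--Floyd type exact sequence
\[\cdots\to\Omega^{C_p,\text{geo}}_\ast(EC_p)_+\to \Omega^{C_p,\text{geo}}_\ast\to \Phi^{C_p}\Omega^{C_p,\text{geo}}_\ast\to \Omega^{C_p,\text{geo}}_{\ast-1}(EC_p)_+\to\cdots.\]
Here the third term is bordism of manifolds with boundary having free action (all isotropy is either trivial or $C_p$). It is identified with fixed point data by restricting to a tubular neighbourhood of $M^{C_p}$, which is Lemma \ref{lema:identify-geo-fixed-points}. The boundary map sends a class to the boundary of the corresponding sphere bundle, which is a free manifold.

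The main step is to check that $PT$ induces a map of long exact sequences, i.e.\ that the three squares commute. The square involving $(EC_p)_+$ commutes by naturality of the Pontrjagin--Thom construction in the target space. The middle square is the top square of \eqref{diagram:big-square}, with right vertical map $\Phi^{C_p}PT=id$, as given by Proposition \ref{prop: commutative-diagram-geo-fixed-pts}. The connecting-map square requires care. I would argue it commutes because both connecting maps are constructed from the same geometric description: on the homotopical side the boundary map $\widetilde{EC_p}\to\Sigma (EC_p)_+$, applied to a fixed-point datum, is realised by the sphere bundle of the normal bundle. This compatibility, and the precise identification of the geometric exact sequence with the isotropy separation one, is what I expect to be the main obstacle; I would rely on tom Dieck's construction \cite{dieck1970bordism} for it.

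With the squares commuting, the outer vertical maps are isomorphisms. The maps on $(EC_p)_+$ terms are isomorphisms because transversality holds for free $C_p$-manifolds \cite{dieck1970bordism}, so geometric and homotopical bordism of free $C_p$-spaces agree. The maps on geometric fixed points are isomorphisms because $\Phi^{C_p}PT$ is the identity. The Five Lemma then gives that $PT:\Omega^{C_p,\text{geo}}_\ast\to\Omega^{C_p}_\ast$ is an isomorphism.
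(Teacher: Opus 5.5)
Your proposal is correct and follows the paper's proof. Both compare the isotropy separation sequences for $(EC_p)_+\to S^0\to\widetilde{EC_p}$, use transversality for free actions on the $(EC_p)_+$ terms and $\Phi^{C_p}PT=id$ from the top square of \eqref{diagram:big-square} on the geometric fixed point terms, and conclude by the Five Lemma. The paper only draws the two squares away from the connecting maps, so your point that the boundary square (sphere bundle of the normal data versus the connecting map $\widetilde{EC_p}\to\Sigma(EC_p)_+$) must also commute fills in a step the paper leaves implicit rather than departing from it.
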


\begin{remark}
    In \cite{schwede2018global}*{Theorem 6.2.33}, Schwede proves an analogous result for unoriented $G$-manifolds, for $G$ a product of a torus and  finite group. Schwede's result reinterprets a previous result of Wasserman \cite{wasserman1969equivariant}*{Theorem 3.11}. An analogous statement for stably complex $G$-manifolds generalizing Theorem \ref{teo:PT-iso} can be shown, but this is beyond the scope of this note.
\end{remark}

Similarly, by evaluating $MU^{C_p}$ and $\Omega^{C_p}$ on the isotropy separation sequence, we obtain the following diagram. 
\[\begin{tikzcd}\tag{8}\label{eq:08}
\Omega^{C_p}_\ast(EC_p)_+ \arrow[r] \arrow[d, "j_\ast(EC_p)_+"'] & \Omega^{C_p}_\ast \arrow[r] \arrow[d, "j_\ast"'] & {MU_\ast[u_i^{-1},u_i^{-1}b_l^{(i)\prime}]} \arrow[d, "\iota_\ast\circ k_\ast"] \\
MU^{C_p}_\ast(EC_p)_+ \arrow[r]                                  & MU^{C_p}_\ast \arrow[r]                          & {MU_\ast[u_i^{\pm 1},u_i^{-1}b_l^{(i)\prime}]}                                 
\end{tikzcd}\]
    
The lower square in \eqref{diagram:big-square} is the right square in the diagram above. $MU^{C_p}$ and $\Omega^{C_p}$ are split spectra and the underlying nonequivariant map $\Omega^{C_p}\xrightarrow{j_\ast} MU^{C_p}$ is a nonequivariant equivalence. Since $EC_{p+}$ is a free $C_p$-space, we get that the map $j_\ast(EC_{p})_+$ is an isomorphism by \cite{greenlees1995equivariant}*{Theorem 3.15}. The map $PT:\Omega^{G,\text{geo}}_\ast \to MU^G_\ast$ is injective for all abelian $G$ by \cite{comezana1996calculations}*{Theorem 5.4}, so it follows from Theorem \ref{teo:PT-iso} that $j_\ast:\Omega^{C_p}_\ast \to MU^{C_p}_\ast$ is injective. By \cite{kriz1999z}*{Lemma 2.1}, the right square in \eqref{eq:08} is a pullback square.

We want to change the right vertical map in \eqref{eq:08} so that it becomes only the inclusion map. To this end, we identify $\iota_\ast(u_i^{-1}b_l^{(i)\prime})$. In the Hopf algebra structure on $MU_\ast(BU)$, the map $\iota_\ast$ is the antipode. Taking $b_0'=1$, the coproduct of this Hopf algebra is given by
\[\Delta(b_l')=\sum_{k=0}^l b_k'\otimes b_{l-k}'\]
and the product is given by the usual polynomial product. The unit $\eta: MU_\ast\to MU_\ast(BU)$ is determined by sending $1$ to $b_0'$, and the counit $\epsilon:MU_\ast(BU)\to MU_\ast$ is determined by
\[\epsilon(b_0')=1 \text{, } \epsilon(b_l')=0 \text{ for } l>0.\]
It follows that
\[\iota_\ast(b_0')=b_0'\text{, } \sum_{k=0}^l\iota_\ast(b_l')b_{l-k}'=0\]
Hence, $\iota_\ast(b_l^{\prime})=d_l'$ where $d_0'+d_1'x+d_2'x^2+\cdots$ is the multiplicative inverse of $b_0'+b_1'x+b_2'x^2+\cdots$ in $MU_\ast(BU)[[x]]$. Doing this for each copy of $BU_+$ in \eqref{eq:6}, define 
\[d_l^{(i)}=\iota_\ast(u_ib_l^{(i)\prime})\]
 So we have
\[(b_0^{(i)}+b_1^{(i)}x+\cdots)(d_0^{(i)}+d_1^{(i)}x+\cdots)=1\]
where $b_l^{(i)}=u_ib_l^{(i)\prime}$. In particular, $d_0^{(i)}=u_i^{-1}$. Note that $d_l^{(i)}$ satisfy $d_{l}^{(i)}\equiv b_l^{(i)}$ modulo $u_i^{-1}, b_{k}'$ for $k<l$. Hence, $MU_\ast[u_i^{\pm1},u_i^{-1}b_l^{(i)\prime}]=MU_\ast[u_i^{\pm1},d_l^{(i)}]$. We obtain a new presentation
\[\Phi^{C_p}MU^{C_p}_\ast=MU_\ast[u_i^{\pm 1}, d_l^{(i)}\mid 1\leq l, 1\leq i\leq p-1]\] 
with $|u_i^{-1}|=2$ and $|d_l^{(i)}|=2(1+l)$. In particular, within $MU_\ast[u_i^{\pm1},d_l^{(i)}]$ we have an isomorphism
\[\Phi^{C_p}\Omega^{C_p}=MU_\ast[u_i^{-1},u_i^{-1}b_l^{(i)\prime}]\cong \iota_\ast(MU_\ast[u_i^{-1},u_i^{-1}b_l^{(i)\prime}])=MU_\ast[u_i^{-1},d_l^{(i)}]\]
Thus, the pullback square at the bottom of \eqref{diagram:big-square} takes now the form
\[\begin{tikzcd}\tag{9}\label{eq:09}
\Omega^{C_p}_\ast \arrow[r, "\iota_\ast\circ \Phi_{\Omega^{C_p}}"] \arrow[d, "j_\ast"'] & {MU_\ast[u_i^{-1},d_l^{(i)}]} \arrow[d, "k_\ast"] \\
MU^{C_p}_\ast \arrow[r, "\Phi_{MU^{C_p}}"']               & {MU_\ast[u_i^{\pm 1},d_l^{(i)}]}                 
\end{tikzcd}\]
where $k_\ast: MU_\ast[u_i^{-1},d_l^{(i)}]\to MU_\ast[u_i^{\pm 1},d_l^{(i)}]$ is the inclusion.

\vspace{0.5cm}
\section{Calculation of \texorpdfstring{$MU^{C_p}_\ast$}{MU Cp *}}\label{section:5}

In this section, we give a new explicit presentation of $MU^{C_p}_\ast$, the lower left corner of \eqref{eq:09}. As an $MU_\ast$-algebra, $MU^{C_p}_\ast$ has been calculated in terms of generators and relations by Strickland \cite{strickland} for $p=2$, and by Hu \cite{hu2025equivariant} for general $p$, using the following pullback diagram \cite{kriz1999z}*{Theorem 1.1}
\begin{equation*}
    \begin{tikzcd}
MU^{C_p}_\ast \arrow[d] \arrow[r]       & {MU_\ast[u_i^{\pm 1}, b_l^{(i)}\mid 1\leq l, 1\leq i\leq n]} \arrow[d, "\phi"] \\
{MU_\ast [[u]] /[p]u} \arrow[r] & {(MU_\ast [[u]] /[p]u)[u^{-1}]}                              
\end{tikzcd}\tag{10}\label{diagram:pullback-kriz}
\end{equation*}
where $b_l^{(i)}=u_ib_l^{(i)\prime}$, and $[p]u$ is the $p$-series of the universal formal group law, which we denote by 
\[F(x,y)=x+_F y= \sum_{k,j\geq 0}a_{k,j}x^ky^j.\]
The right vertical map is given by
 \[\phi(u_i)=[i]u \text{, and } \phi(b_l^{(i)})=\text{Coeff}_{x^l}(x+_F [i]u)=\sum_{j\geq 0}a_{l,j}^{(i)}u^j\]
The bottom horizontal map is inverting $u$. However, the upper right corner of \eqref{diagram:pullback-kriz} is not the presentation of $\Phi^{C_p}MU^{C_p}_\ast$ that we want to use. Instead, we need an alternative presentation of $MU^{C_p}_\ast$ which uses $MU_\ast[u_i^{\pm 1},d_l^{(i)}]$ in place of $MU_\ast[u_i^{\pm 1}, b_l^{(i)}]$ in the upper right corner of \eqref{diagram:pullback-kriz}.

If $[i]u$ is the $i$-series of $u$ over $F$, we write
\[x+_F [i]u=\sum_{k,j\geq 0}a_{k,j}^{(i)}x^ku^j.\]
Note that $a_{1,0}^{(i)}=1$, $a_{k,0}^{(i)}=0$ for $k\neq 1$, and $a_{0,1}^{(i)}=i$. For the $p$-series, we write
\[[p]u=\sum_{j\geq 0}c_{j}u^j.\]
We need to replace the right upper corner of \eqref{diagram:pullback-kriz} by the presentation $\Phi^{C_p}MU^{C_p}_\ast=MU_\ast[u_i^{\pm 1}, d_l^{(i)}]$ we obtained in the last section. Hence, \eqref{diagram:pullback-kriz} becomes

\begin{equation*}\label{eq:11.1}\tag{11}
    \begin{tikzcd}
MU^{C_p}_\ast \arrow[d] \arrow[r]       & {MU_\ast[u_i^{\pm 1}, d_l^{(i)}]} \arrow[d,"\phi"] \\
{MU_\ast [[u]] /[p]u} \arrow[r] & {(MU_\ast [[u]] /[p]u)[u^{-1}]}                              
\end{tikzcd}
\end{equation*}
We calculate now $\phi(d_{l}^{(i)}$). Denote by $i^{-1}$ the smallest positive integer such that $i^{-1}i=1+k_ip$ with $k_i\geq 0$. In other words, $i^{-1}$ is the lowest positive representative of $i^{-1}\in (\Z/p)^\times$. Then in $(MU_\ast [[u]] /[p]u)[u^{-1}]$
\begin{equation*}\label{eq:f}
    u=u+_F [k_i]([p]u)=[1+k_ip]u=[i^{-1}]([i]u)=\sum_{j\geq1}a_{0,j}^{(i^{-1})}([i]u)^j=\sum_{j\geq0}a_{0,j+1}^{(i^{-1})}([i]u)^{j+1}. 
\end{equation*}
If we set 
\[\tag{12}\label{eq:f_i}f_i(u)=\sum_{j\geq0}a_{0,j+1}^{(i^{-1})}([i]u)^j\]
we get $u^{-1}f_i(u)=([i]u)^{-1}$, so $[i]u$ is invertible in $(MU_\ast [[u]] /[p]u)[u^{-1}]$. Consider $x+_F [i]u$ in $(MU_\ast [[u]] /[p]u)[u^{-1}][[x]]$. Since its constant term on $x$ is $[i]u$, $x+_F [i]u$ is invertible in $(MU_\ast[[u]]/[p]u)[u^{-1}][[x]]$. By induction, it can be seen that the lowest possible power of $u$ in $\text{Coeff}_{x^l}(x+_F [i]u)^{-1}$ is $-(l+1)$, so we write
\[(x +_F [i]u)^{-1}=\sum_{k\geq 0,j\geq -(k+1)}t_{k,j}^{(i)}x^ku^j.\]
Note that $t_{k,j}^{(i)}\in MU_\ast$ depends on the choice of $i^{-1}\in \Z$ that we set above. Moreover, we have $t_{0,-1}^{(i)}=i^{-1}$ for all $i\in (\Z/p)^\times$, and $t_{0,j}^{(1)}=0$ for all $j\geq 0$. We can compute $\phi$ in the new generators $d_l^{(i)}$ by using the fact that 
\[(b_0^{(i)}+b_1^{(i)}x+\cdots)(d_0^{(i)}+d_1^{(i)}x+\cdots)=1.\]
We get 
\[\phi(d_l^{(i)})=\text{Coeff}_{x^l}(x+_F [i]u)^{-1}=\sum_{j\geq -(l+1)}t_{l,j}^{(i)}u^j.\]
The following is the main result of this section. Recall that we write $i^{-1}$ for the lowest positive representative of $i^{-1}\in (\Z/p)^\times$, which satisfies $i^{-1}i=1+k_ip$ for some $k_i\geq 0$.
\begin{teo}\label{teo:MU-calculation}
    As an $MU_\ast$-algebra, $MU^{C_p}_\ast$ has the following generators:
    \begin{align*}
        u \text{, } d_{l,j}^{(i)} \text{, } \eta_i \text{, } q_j \text{ with } |u|=-2, |\eta_i|=0, |q_j|=2(j-1), |d_{l,j}^{(i)}|=2(l+j+1).
    \end{align*}
for $l,j\geq 0$ and $i\in (\Z/p)^\times$, with the relations:
\begin{align*}
    d_{l,j}^{(i)}-t_{l,j}^{(i)} & =ud_{l,j+1}^{(i)} \tag{13}\label{eq:relations-d}\\
    d_{0,j}^{(1)} & =0 \text{ for all $j\geq 0$} \tag{14}\label{eq:relations-d-0}\\
    q_{j}-c_{j} & =uq_{j+1} \tag{15}\label{eq:relations-q}\\
    q_0 & =0 \tag{16}\label{eq:relations-q-0}\\
    \eta_1 & =1 \tag{17}\label{eq:eta-1}\\
   \eta_i(ud_{0,0}^{(i)}+i^{-1}) & =1+k_iq_1  \tag{18}\label{eq:relations-eta-d}\\
   \eta_i q_1 & =i q_1  \tag{19}\label{eq:relations-eta-q}
\end{align*}

\end{teo}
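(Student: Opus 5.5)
The proof will use the pullback square \eqref{eq:11.1}. It identifies $MU^{C_p}_\ast$ with the subring
\[P=\{(a,b)\in MU_\ast[[u]]/[p]u\times MU_\ast[u_i^{\pm1},d_l^{(i)}] \mid a=\phi(b) \text{ in } (MU_\ast[[u]]/[p]u)[u^{-1}]\}.\]
The plan is to write down explicit elements of $P$, check the relations, and show that the resulting map from the presented algebra $A$ to $P$ is an isomorphism.

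\textbf{Elements and relations.} Set
\begin{itemize}
\item $u=(u,u_1)$;
\item $q_j=\bigl(\sum_{k\ge j}c_ku^{k-j},\,0\bigr)$, the tail of the $p$-series divided by $u^j$, which is $u^{-j}[p]u-\sum_{k<j}c_ku^{k-j}$ in the localisation;
\item $d^{(i)}_{l,j}=\bigl(\sum_{s\ge j}t^{(i)}_{l,s}u^{s-j},\;u_1^{-j}(\cdots)\bigr)$, lifting the tail of $\phi(d^{(i)}_l)$ beyond $u^{j-1}$.
\end{itemize}
More precisely, the fixed-point coordinate of $d^{(i)}_{l,j}$ is $u^{-j}\bigl(d^{(i)}_l-\sum_{s<j}t^{(i)}_{l,s}u^s\bigr)$. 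Here I first express $u_i$ in terms of $u_1$: since $\phi(u_i)=[i]u$ and $u$ is invertible there, and the upper corner is a polynomial ring, I use the ring $MU_\ast[u_i^{\pm1},d_l^{(i)}]$ with $u$ meaning $u_1$. The terms with $s<0$ are put into $d^{(i)}_{l,0}$, so $d^{(i)}_{l,0}$ is $d^{(i)}_l$ minus its polar part $\sum_{s=-(l+1)}^{-1}t^{(i)}_{l,s}u^s$. That polar part must itself lie in the fixed-point coordinate; I handle this via the elements $\eta_i=(\,\cdot\,,\,u_1^{-1}u_i\cdot i^{-1}\ldots)$. Concretely, $\eta_i$ should be the element whose fixed-point coordinate is $u_1/u_i$ up to the normalisation forced by \eqref{eq:relations-eta-d}, and whose $u$-adic coordinate is $uf_i(u)/u$ adjusted by $k_iq_1$. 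Relations \eqref{eq:relations-d}, \eqref{eq:relations-q}, \eqref{eq:relations-q-0} and \eqref{eq:eta-1} then hold by construction. Relation \eqref{eq:relations-d-0} follows from $t^{(1)}_{0,j}=0$ and $d^{(1)}_0=u_1^{-1}$. Relations \eqref{eq:relations-eta-d} and \eqref{eq:relations-eta-q} are verified coordinatewise, using $[i^{-1}][i]u=u+_F[k_i][p]u$ and $q_1\equiv p \pmod u$ in the $u$-adic factor.

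\textbf{Surjectivity.} I follow Strickland and Hu. Take $(a,b)\in P$ and write $b$ as a Laurent polynomial in $u$ with coefficients monomials in $d_l^{(i)}$ and $u_i^{\pm1}$. Using $\eta_i$, I rewrite $u_i^{\pm1}$ through $u$ and $d^{(i)}_{0,0}$. Using \eqref{eq:relations-d}, I replace each $d_l^{(i)}$ by $d^{(i)}_{l,0}+(\text{polar terms})$. Then I subtract an element of $A$ with the same fixed-point image, chosen to be a polynomial in $u^{-1}$-shifted generators $d^{(i)}_{l,j}$ with $uq_{j+1}$ clearing denominators. The difference has fixed-point coordinate $0$, so it is $u$-power torsion in $MU_\ast[[u]]/[p]u$, and hence lies in the ideal generated by the $q_j$'s. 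By the same argument, $\ker\bigl(MU_\ast[[u]]/[p]u\to\text{localisation}\bigr)$ is spanned by the $q_j$'s times $MU_\ast$.

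\textbf{Injectivity (the main obstacle).} Here I build a normal form for $A$. Every element of $A$ is a sum of $u^{n}$ times monomials in the $d_{l,j}$'s, $q_j$'s and $\eta_i$'s; using the relations, this reduces to a sum of
\begin{itemize}
\item $u^{n}$ times monomials in the $d^{(i)}_{l,j}$ of a controlled form, with $\eta_i$ eliminated except in degree-$0$ corrections;
\item $q_j$-terms times $MU_\ast$ only, since $q_jd^{(i)}_{l,s}$ and $q_jq_k$ can be reduced using \eqref{eq:relations-eta-q} together with $uq_j\cdot(\ldots)$ manipulations.
\end{itemize}
I then show that normal forms map injectively: the fixed-point coordinate detects the $d$-part via a filtration by $u$-degree, and the $q$-part is detected in $MU_\ast[[u]]/[p]u$ by comparing with the kernel computed above. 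Compatibility with Hu's presentation of $MU^{C_p}_\ast$ (which uses $b$-generators) provides a check: since $b^{(i)}$ and $d^{(i)}$ are related by series inversion, the generators should correspond triangularly. Where the normal-form argument becomes messy, I would instead prove injectivity by showing that the map to Hu's presented algebra is well defined, with Hu's result providing the isomorphism.
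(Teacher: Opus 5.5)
There are concrete errors in your elements, so the relations do not hold ``by construction''.

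The pair $\bigl(\sum_{k\ge j}c_ku^{k-j},\,0\bigr)$ is not in the pullback for $j\ge 2$. As you note yourself, its first coordinate becomes $-\sum_{k<j}c_ku^{k-j}$ once $u$ is inverted; for $j=2$ this is $-pu^{-1}\neq 0$. So the fixed-point coordinate of $q_j$ must be $-\sum_{k=1}^{j}c_{j-k}u_1^{-k}$. With $0$ there, relation (15) for $j=1$ reads $-p=0$ in the second factor.

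Your $\eta_i$ is conflated with its inverse. Since $ud^{(i)}_{0,0}+i^{-1}$ has fixed-point coordinate $u_1u_i^{-1}$ and $q_1$ has coordinate $0$, relation (18) forces the fixed-point coordinate $u_i/u_1$, not $u_1/u_i$ and with no extra factor. The $u$-adic coordinate must be $[i]u/u\equiv i\pmod u$. The series $f_i(u)\equiv i^{-1}\pmod u$ is the $u$-adic coordinate of $ud^{(i)}_{0,0}+i^{-1}$, not of $\eta_i$, and with that choice relation (19) fails in general.

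Also, the kernel of $MU_\ast[[u]]/[p]u\to(MU_\ast[[u]]/[p]u)[u^{-1}]$ consists only of the $MU_\ast$-multiples of $[p]u/u$, which is the $u$-adic coordinate of $q_1$. The $q_j$ with $j\ge 2$ are not in it.

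These errors are repairable. The substantive gap is injectivity, together with the integrality part of surjectivity.

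Your normal form cannot eliminate $\eta_i$. For $i\ge 2$, the fixed-point images of $u$, $d^{(i)}_{l,j}$ and $q_j$ all lie in $MU_\ast[u_1^{\pm1},u_i^{-1},d^{(i)}_l]$, whereas $\eta_i\mapsto u_i/u_1$ does not. So monomials with positive powers of $\eta_i$ must occur in any $MU_\ast$-spanning set. You have neither produced such a set nor shown that its image is $MU_\ast$-independent. The fallback through Hu's presentation only gives a map out of your presented algebra $A$; it gives injectivity only if you also construct an inverse.

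In the surjectivity step, ``subtract an element of $A$ with the same fixed-point image'' presupposes what is to be proved. You must use that $\phi(b)$ is integral: for example $pu_1^{-1}$ is hit, by $-q_2$, but $u_1^{-1}$ is not. You do not say how this is used.

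The missing idea, which is the heart of the paper's proof, is that $u$ has bounded torsion in $A$, namely $\bigcup_k\mathrm{Ann}(u^k)=\mathrm{Ann}(u)=(q_1)$. The paper proves this by showing $u$ is a non-zero-divisor on $A/(q_1)$. It does so by identifying the finite stages of $A/(q_1)$ with a localized polynomial algebra over $MU_\ast[u]$ modulo one element $g_1$ whose constant term is $p$. One then identifies $A^\wedge_u\cong MU_\ast[[u]]/[p]u$ and $A[u^{-1}]\cong MU_\ast[u_i^{\pm1},d^{(i)}_l]$ compatibly with $\phi$. Strickland's pullback criterion for bounded torsion then gives injectivity and surjectivity at once.

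In your own terms, injectivity goes as follows. An element with zero fixed-point image is $u$-power torsion. Hence it equals $mq_1$ with $m\in MU_\ast$, because every generator $x$ satisfies $xq_1\in MU_\ast q_1$. Finally, $mq_1$ is detected as $m\,[p]u/u$ in $MU_\ast[[u]]/[p]u$, so it vanishes only when $m=0$.
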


\begin{remark}
    Relations \eqref{eq:relations-eta-d} and \eqref{eq:relations-eta-q} for $i=1$ are redundant. So, we could omit $\eta_1$ and take the mentioned relations for $i\geq 2$. Similarly, we could omit $q_0$ and change relation \eqref{eq:relations-q-0} by $uq_1=0$. We could also omit the elements $d_{0,j}^{(1)}$ and just take relations \eqref{eq:relations-d} for all $l,j$ if $i\geq 2$, and for all $l,j$ if $l\geq 1$ and $i=1$. Moreover, we can obtain a copy of the Euler classes as $u_i=u\eta_i$.
\end{remark}

The proof of Theorem \ref{teo:MU-calculation} uses the following result from commutative algebra.

\begin{prop}\cite{strickland}*{Theorem 4}\label{prop:pullback-strickland}
    Let $R$ be a commutative ring and $x\in R$ such that $x$ has bounded $x$-torsion, meaning that there is an $N$ such that
    \[\bigcup_{k\geq 0}\text{Ann}(x^k)=\text{Ann}(x^N).\]
    Then, the following square is a pullback
    \[\begin{tikzcd}
R \arrow[r] \arrow[d] & {R[x^{-1}]} \arrow[d] \\
R^{\wedge}_x \arrow[r]  & {R^{\wedge}_x[x^{-1}]} 
\end{tikzcd}\]
\end{prop}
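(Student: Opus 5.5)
The plan is to show directly that the canonical map $R\to R[x^{-1}]\times_{R^\wedge_x[x^{-1}]}R^\wedge_x$ is bijective. Here $R^\wedge_x=\lim_k R/x^k$. I will use two standard facts about completion at a principal ideal:
\begin{itemize}
\item[(a)] $R^\wedge_x/x^M R^\wedge_x\cong R/x^M$ for every $M$, since the ideal $(x)$ is finitely generated;
\item[(b)] the kernel of $R\to R^\wedge_x$ is $\bigcap_k x^kR$.
\end{itemize}
Throughout, $N$ denotes the torsion bound from the hypothesis, so that $x^jr=0$ for some $j$ implies $x^Nr=0$.

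\emph{Injectivity.} Suppose $r\in R$ maps to $0$ in both $R[x^{-1}]$ and $R^\wedge_x$. From the first condition, $x^kr=0$ for some $k$, hence $x^Nr=0$. From the second and (b), $r\in\bigcap_k x^kR$, so we can write $r=x^Ns$. Then $x^{2N}s=x^Nr=0$, so $s\in\mathrm{Ann}(x^{2N})=\mathrm{Ann}(x^N)$. Therefore $r=x^Ns=0$.

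\emph{Surjectivity.} Take a compatible pair $(a/x^m,\hat b)$. Compatibility means $x^k(x^m\hat b-a)=0$ in $R^\wedge_x$ for some $k$. Fix a large $M$ and choose $b\in R$ with $b\equiv\hat b$ modulo $x^MR^\wedge_x$, which is possible by (a). Then $x^k(x^mb-a)\equiv 0$ modulo $x^M$ in $R^\wedge_x$, so by (a) again $x^k(x^mb-a)=x^Mc$ for some $c\in R$. For $M\ge k+m$ this gives
\[
x^k\bigl(x^mb-a-x^{M-k}c\bigr)=0,
\]
so $y:=x^mb-a-x^{M-k}c$ satisfies $x^Ny=0$. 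Set $r=b-x^{M-k-m}c$. Then $x^mr=a+y$, and $y$ is $x$-power torsion, so $r=a/x^m$ in $R[x^{-1}]$. Moreover $r\equiv\hat b$ modulo $x^{M-k-m}R^\wedge_x$. It remains to see that the image of $r$ equals $\hat b$ exactly. Put $\delta=r-\hat b\in R^\wedge_x$. Multiplying the relations above shows $x^{k+m+N}\delta=0$, and $\delta\in x^{M'}R^\wedge_x$ with $M'=M-k-m$ as large as we like.

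\emph{Main obstacle.} The crux is to conclude $\delta=0$, and for this I need bounded torsion to pass to the completion. Precisely, if $z\in R^\wedge_x$ and $x^jz=0$, then $x^{N}z=0$, with the same $N$ (or at least a uniform bound). The argument I would try first is as follows.
\begin{itemize}
\item Represent $z$ by compatible lifts $z_n\in R$. The condition $x^jz=0$ says $x^jz_n\in x^nR$.
\item Write $x^jz_n=x^nw_n$. For $n\ge j$ this gives $x^j\bigl(z_n-x^{n-j}w_n\bigr)=0$, so $z_n-x^{n-j}w_n\in\mathrm{Ann}(x^N)$.
\item Hence $x^Nz_n\in x^{n-j}R$ for all $n$, and this is exactly the statement that $x^Nz=0$ in the limit.
\end{itemize}
Granting this, the injectivity argument above runs verbatim in $R^\wedge_x$. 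Since $\delta\in x^NR^\wedge_x$, write $\delta=x^N\epsilon$. Then $x^{k+m+2N}\epsilon=0$, so $x^N\epsilon=0$, i.e.\ $\delta=0$. This establishes surjectivity and hence the pullback property.
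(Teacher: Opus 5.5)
Your proof is correct. The paper gives no argument of its own for this statement. It is quoted from Strickland and used as a black box in the proof of Theorem~\ref{teo:MU-calculation}, so your element chase is a self-contained replacement for the citation rather than a variant of anything in the text. It needs only (a), (b) and the hypothesis on $\mathrm{Ann}(x^N)$. It also correctly isolates the one point that needs care: the torsion bound $N$ must persist in $R^\wedge_x$.

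That step works as you sketched once the reindexing is made explicit. Since $z_{n+j}\equiv z_n$ mod $x^n$, the level-$n$ component of $x^Nz$ is represented by $x^Nz_{n+j}$, and your estimate gives $x^Nz_{n+j}\in x^nR$, so $x^Nz=0$.

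Two minor simplifications are available.
\begin{enumerate}
\item You actually have $x^{k+m}\delta=x^ky=0$ on the nose, so $M\ge k+m+N$ suffices.
\item The completion lemma can be bypassed. Take two admissible choices $M,M'\ge k+m+N$. The difference $r_M-r_{M'}$ vanishes in $R[x^{-1}]$, and it lies in $x^NR^\wedge_x\cap R=x^NR$ by (a). Your injectivity argument then forces $r_M=r_{M'}$. Hence $r-\hat b\in\bigcap_L x^LR^\wedge_x$, which is zero by (a).
\end{enumerate}
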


Let $R$ be the $MU_\ast$-algebra defined by the generators and relations given of Proposition \ref{teo:MU-calculation}. We will show that $u\in R$ satisfies the hypothesis in \cref{prop:pullback-strickland}. Also, $R^{\wedge}_u\cong MU_\ast[[u]]/[p]u$, and $R[u^{-1}]\cong MU_\ast[u_i^{\pm 1},d_l^{(i)}]$ in a compatible way. We start by proving that $u$ has bounded torsion in $R$.

\begin{lema}\label{lema:bounded-torsion}
    In $R$, we have
    \[\bigcup_{k\geq 0}\text{Ann}(u^k)=\text{Ann}(u)=(q_1)\]
\end{lema}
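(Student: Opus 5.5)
Two things need proving: that $(q_1)\subseteq \text{Ann}(u)$, and that anything killed by some power of $u$ already lies in $(q_1)$. The first inclusion is immediate from \eqref{eq:relations-q} with $j=0$ together with \eqref{eq:relations-q-0}, since $uq_1=q_0-c_0=0-0=0$ (here $c_0=0$, as $[p]u$ has no constant term). For the reverse inclusion, the strategy is to put every element of $R$ into a normal form and then read off how multiplication by $u$ acts on it. It suffices to show $\text{Ann}(u^k)\subseteq (q_1)$ for all $k$, which in turn follows from $\text{Ann}(u)\subseteq(q_1)$: if $u^k r=0$, then $u^{k-1}r\in (q_1)$, say $u^{k-1}r=sq_1$; multiplying by $u$ gives nothing new, so instead I would run an induction using a normal form in which $u$ acts injectively modulo $(q_1)$.

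\textbf{Normal form.} I would first eliminate generators. By \eqref{eq:relations-d} and \eqref{eq:relations-q}, every $d^{(i)}_{l,j}$ equals $t^{(i)}_{l,j}+u d^{(i)}_{l,j+1}$ and every $q_j$ equals $c_j+uq_{j+1}$. Hence any monomial in $u$ and the $d$'s and $q$'s can be rewritten so that $u$ appears only to the power $0$, or $u^m$ appears only multiplied by generators of sufficiently high second index. Relations \eqref{eq:eta-1}--\eqref{eq:relations-eta-q} let us handle the $\eta_i$ as well. From \eqref{eq:relations-eta-d} together with $u d^{(i)}_{0,0}+i^{-1}=d^{(i)}_{0,-1}$-type identities, $\eta_i$ is essentially the inverse of $[i^{-1}]$ modulo torsion.

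\textbf{Comparing with the candidate quotients.} A cleaner route is to map $R$ to two targets. The first is $A=MU_\ast[u_i^{\pm1},d^{(i)}_l]$, sending $d^{(i)}_{l,j}\mapsto u^{-j-1}\bigl(d^{(i)}_l-\sum_{s<j}t^{(i)}_{l,s}u^{s}\bigr)$ (suitably normalised), $q_j\mapsto u^{-j}\bigl([p]u$ truncated$\bigr)$, $\eta_i\mapsto u_i/u$. The second is $B=MU_\ast[[u]]/[p]u$ via the power series expansions. The claim to establish is that $R/(q_1)$ embeds into $A\times B$ and that $(q_1)\cong MU_\ast[\ldots]/(u)$ is a free module over $R/(u)$ generated by $q_1$, namely $R q_1\cong (R/u)q_1$. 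Since $u$ acts injectively on $A$ and its action on $B$ is injective on the image, because $[p]u=pu+\cdots$ and $u$ is a non-zero divisor in $MU_\ast[[u]]/[p]u$ ($MU_\ast$ is torsion-free and $[p]u/u$ has constant term $p$), we obtain $ur=0 \Rightarrow r\mapsto 0$ in $A\times B$, and so $r\in\ker(R\to A\times B)=(q_1)$.

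\textbf{Main obstacle.} The hard step is proving $\ker(R\to A\times B)\subseteq(q_1)$. I would do this with the normal form: modulo $(q_1)$, write $r$ as an $MU_\ast$-polynomial in $u$, the $d^{(i)}_{l,j}$ with fixed large $j$, and $q_j$ with large $j$, and check linear independence after mapping to $A$ after inverting $u$. The relation $\eta_iq_1=iq_1$ shows that on $(q_1)$ everything collapses to $MU_\ast$-multiples of monomials in $q_1$ and the $q_j,d$'s with $u=0$. Once this kernel is identified, the chain $\bigcup_k\text{Ann}(u^k)\subseteq\ker\subseteq(q_1)\subseteq\text{Ann}(u)$ closes, and the statement follows, with $N=1$ in \cref{prop:pullback-strickland}.
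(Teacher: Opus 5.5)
\textbf{There is a genuine gap in the central step.} Your easy inclusion $(q_1)\subseteq\text{Ann}(u)$ is right, and you do reach the reformulation that matters: $u$ must be a non-zero-divisor on $R/(q_1)$. The argument you give for it fails, however.

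\emph{The claim about $B$ is false.} You assert that $u$ is a non-zero-divisor in $B=MU_\ast[[u]]/[p]u$. Write $[p]u=u\,g$ with $g=p+c_2u+c_3u^2+\cdots$. Then $ug=0$ in $B$, yet $g\neq 0$ in $B$, because $MU_\ast[[u]]$ is a domain and $u$ is not a unit. Your constant-term-$p$ reasoning shows that $u$ is regular on $MU_\ast[[u]]/(g)$, not on $MU_\ast[[u]]/(ug)$. Concretely, $r=q_1$ satisfies $ur=0$ but maps to $g\neq 0$ in $B$. So ``$ur=0\Rightarrow r\mapsto 0$ in $A\times B$'' fails, and so does $\ker(R\to A\times B)=(q_1)$; once the pullback theorem is known, that kernel is actually zero. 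Your chain $\bigcup_k\text{Ann}(u^k)\subseteq\ker\subseteq(q_1)$ therefore breaks at its first link, and $R\to A\times B$ does not even factor through $R/(q_1)$.

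\emph{Dropping $B$ fixes the logic but leaves the real work undone.} If you use only $A=MU_\ast[u_i^{\pm1},d^{(i)}_l]$, where $u$ is a unit, you then need $\ker(R\to A)\subseteq(q_1)$, i.e.\ that $R/(q_1)\to A$ is injective. Your sketch, ``check linear independence after mapping to $A$ after inverting $u$'', is circular: injectivity of $R/(q_1)\to (R/(q_1))[u^{-1}]$ is exactly the regularity of $u$ on $R/(q_1)$.

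\emph{The missing idea is an explicit presentation of finite stages.} Let $R_k\subseteq R/(q_1)$ be generated by the $d^{(i)}_{l,j},q_j,\eta_i$ with $j\le k$. Relations \eqref{eq:relations-d} and \eqref{eq:relations-q} express every generator of index $j<k$ as a polynomial in $u$ and the index-$k$ generators. Relation \eqref{eq:relations-eta-d} with $q_1=0$ makes $\eta_i$ the inverse of $h^{(i)}_{0,0}$, the rewritten form of $ud^{(i)}_{0,0}+i^{-1}$. This gives
\[R_k\cong MU_\ast[u][d^{(i)}_{l,k},q_k\mid (i,l)\neq(1,0)]\bigl[(h^{(i)}_{0,0})^{-1}\bigr]/(g_1),\qquad g_1=\sum_{s=0}^{k-2}c_{1+s}u^s+u^{k-1}q_k,\]
where $g_1$ is $q_1$ rewritten via \eqref{eq:relations-q}. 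The single surviving relation $g_1$ has $u$-constant term $c_1=p$, which is a non-zero-divisor modulo $u$. So if $uh=g_1k'$, comparing $u$-constant terms gives $k'=uk''$, and hence $h=g_1k''$. Thus $u$ is regular on each $R_k$, hence on $R/(q_1)=\text{colim}_kR_k$. Then $u^ny=0$ immediately gives $y\in(q_1)$.
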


\begin{proof}
    The proof of this result is similar to those of \cite{strickland} and \cite{hu2025equivariant}*{Lemma 2.7}. We will show that $u$ is not a zero divisor in $R/(q_1)$. Note that \eqref{eq:relations-eta-d} over $R/(q_1)$ becomes
    \[\eta_i(ud_{0,0}^{(i)})+i^{-1})=1.\]
    Fix $k\geq 2$ and set $R_k$ to be the $MU_\ast[u]$-subalgebra of $R/(q_1)$ generated by $d_{l,j}^{(i)}$, $q_j$, and $\eta_i$ for $j\leq k$. Set
    \[A_k=MU_\ast[u][d_{l,k}^{(i)},q_k\mid (i,l)\neq (1,0)].\]
For $j\leq k-1$, consider the following element of $A_k$
\[h_{l,j}^{(i)}=u(d_{l,k}^{(i)}u^{k-j}+\sum_{s=0}^{k-j-1}t_{l,j+s}^{(i)}u^{s})+i^{-1}.\]
Define a map of $MU_\ast[u]$-algebras
\begin{align*}
    \Phi: A_k[(h_{0,0}^{(i)})^{-1}\mid i\neq 1]\to R_k \\
    d_{l,k}^{(i)}\mapsto d_{l,k}^{(i)} \\
    q_k \mapsto q_k \\
    (h_{0,0}^{(i)})^{-1} \mapsto \eta_i
\end{align*}
By induction on $n\geq 1$, relation \eqref{eq:relations-d} gives
\[d_{l,j}^{(i)}=\sum_{s= 0}^{n-1}t_{l,j+s}^{(i)}u^s+u^nd_{l,j+n}^{(i)}.\]
This relation holds in $R_k$, so we get
\[d_{l,j}=\Phi(d_{l,k}^{(i)}u^{k-j}+\sum_{s=0}^{k-j-1}t_{l,j+s}^{(i)}u^{s}).\]
In particular, $\Phi(h_{0,0}^{(i)})=ud_{0,0}^{(i)}+i^{-1}$, which is consistent with $\Phi(h_{0,0}^{(i)})^{-1}=\eta_i$. A similar argument using \eqref{eq:relations-q} shows that $q_j$ is in the image of $\Phi$. We conclude that $\Phi$ is surjective. In particular, if we set
\[g_1=\sum_{s= 0}^{k-2}c_{1+s}u^s+u^{k-1}q_{k}\]
we see that $\Phi(g_1)=q_1=0$. Thus, the map $\Phi$ factors through a surjective map
\[\overline{\Phi}:A_k[(h_{0,0}^{(i)})^{-1}]/(g_1)\to R_k.\]
We want to show that $\overline{\Phi}$ is an isomorphism by giving its inverse. Define
\begin{align*}
    \overline{\Phi}^{-1}:R_k & \to A_k[(h_{0,0}^{(i)})^{-1}]/(g_1)\\
    d_{l,j}^{(i)} & \mapsto d_{l,k}^{(i)}u^{k-j}+\sum_{s=0}^{k-j-1}t_{l,j+s}^{(i)}u^{s} \\
    q_j & \mapsto \sum_{s= 0}^{k-j-1}c_{j+s}u^s+u^{k-j}q_{k}\\
    \eta_i & \mapsto h_{0,0}^{(i)}.
\end{align*}
The relations in $R_k$ are given by taking those in $R$ and setting $q_1=0$. Hence, $\overline{\Phi}^{-1}$ respects those relations by construction. It can be seen that $\overline{\Phi}^{-1}$ is indeed the inverse of $\overline{\Phi}$. 

The element $u$ is clearly not a zero divisor in $A_k[(h_{0,0}^{(i)})^{-1}]$. We claim that $u$ is not a zero divisor in $A_k[(h_{0,0}^{(i)})^{-1}]/(g_1)$. Suppose  $h=a_0+a_1 u+\cdots +a_n u^n\in A_k[(h_{0,0}^{(i)})^{-1}]$ is such that $uh\equiv 0$ mod $g_1$. Then, there is $k$ such that $uh=g_1k$. Note that $g_1$ has $p$ as constant term, and this is not a zero divisor in $MU_\ast$. By comparing coefficients, we see that $k=uk_1$ for some $k_1\in A_k[(h_{0,0}^{(i)})^{-1}]$. We get $h=g_1k_1$ as $u$ is not a zero divisor in $A_k[(h_{0,0}^{(i)})^{-1}]$. Therefore, $u$ is not a zero divisor in $A_k[(h_{0,0}^{(i)})^{-1}]/(g_1)\cong R_k$ as claimed. Since $R/(q_1)=\text{colim}R_k$, we conclude that $u$ is not a zero divisor in $R/(q_1)$.

Finally, suppose that $u^ny=0$ in $R$. Then, by the previous paragraph we get $u^{n-1}y=0$ in $R/(q_1)$. By induction on $n\geq 1$, we see that $y=0$ in $R/(q_1)$. This proves that $\text{Ann}(u^k)=(q_1)$ for all $k\geq 0$.
\end{proof}

We identify now the completion $R^{\wedge}_u$ and the localization $R[u^{-1}]$ with the lower left corner and the right upper coner of \eqref{eq:11.1} respectively.

\begin{lema}
    There is a map of $MU_\ast$-algebras 
    \begin{align*}
    \rho:R & \to MU_\ast[[u]]/[p]u \\
    u & \mapsto u \\
    d_{l,j}^{(i)} & \mapsto \sum_{k\geq 0} t_{l,j+k}^{(i)}u^k \\
    q_j & \mapsto \sum_{k\geq 0} c_{j+k}u^k \\
    \eta_i & \mapsto \sum_{k\geq 0} a_{0,1+k}^{(i)}u^k 
    \end{align*}
    
which induces an isomorphism $\widehat{\rho}:R^{\wedge}_u\to MU_\ast[[u]]/[p]u$.
\end{lema}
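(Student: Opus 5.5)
First I will check that $\rho$ is well defined, i.e. that the assigned images satisfy relations \eqref{eq:relations-d}--\eqref{eq:relations-eta-q} in $MU_\ast[[u]]/[p]u$. Relations \eqref{eq:relations-d} and \eqref{eq:relations-q} hold by telescoping: $\sum_{k\geq0}t^{(i)}_{l,j+k}u^k - t^{(i)}_{l,j} = u\sum_{k\geq0}t^{(i)}_{l,j+1+k}u^k$, and similarly for $c_j$. Relation \eqref{eq:relations-d-0} holds since $t^{(1)}_{0,j}=0$ for $j\geq0$. For \eqref{eq:relations-q-0}, $\rho(q_0)=\sum_k c_ku^k=[p]u=0$. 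For \eqref{eq:eta-1}, $\sum_k a^{(1)}_{0,1+k}u^k=u^{-1}(0+_F u)=1$. For \eqref{eq:relations-eta-q}, note $\rho(q_1)=u^{-1}[p]u$ as a genuine power series and $\rho(\eta_i)=[i]u/u$. Then $\rho(\eta_i)\rho(q_1)-i\rho(q_1)=\bigl(([i]u/u)-i\bigr)\cdot([p]u/u)$. The first factor is divisible by $u$, so the product is a multiple of $[p]u$, hence zero.

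The key relation is \eqref{eq:relations-eta-d}. I will interpret $u\rho(d^{(i)}_{0,0})+i^{-1}$ as the power series $u\cdot\bigl(\text{Coeff}_{x^0}(x+_F[i]u)^{-1}\bigr)$, namely $u/[i]u$ computed via $f_i$ in \eqref{eq:f_i}. Concretely, $u([i]u)^{-1}=f_i(u)$, which is an honest power series in $u$ with constant term $i^{-1}$. I then need the identity $\frac{[i]u}{u}\cdot f_i(u)=1+k_i\,\frac{[p]u}{u}$ in $MU_\ast[[u]]$, reduced modulo $[p]u$. This follows from $[i]u\cdot f_i(u)=[i^{-1}]([i]u)=[1+k_ip]u=u+_F[k_i]([p]u)$. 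Here $u+_F[k_i]([p]u)$ equals $u+k_i[p]u$ plus terms divisible by $u\,[p]u$. Dividing by $u$ gives $1+k_i[p]u/u$ modulo $[p]u$. I expect this to be the main obstacle. One must be careful that everything is computed in $MU_\ast[[u]]$ before reducing, and check that the chosen integer representative $i^{-1}$ matches the definition of the $t^{(i)}_{0,j}$.

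Next, for the completion, $\rho(u)=u$ and the target is $u$-adically complete, so $\rho$ extends to $\widehat\rho:R^\wedge_u\to MU_\ast[[u]]/[p]u$. For the inverse, I will define $MU_\ast[[u]]\to R^\wedge_u$ by $u\mapsto u$. This sends $[p]u$ to $\sum_j c_ju^j$. In $R^\wedge_u$, iterating \eqref{eq:relations-q} gives $q_0=\sum_{s<n}c_su^s+u^nq_n$ for every $n$. Hence $\sum_j c_ju^j=q_0=0$ $u$-adically, so the map factors through $MU_\ast[[u]]/[p]u$.

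To see that this inverse is surjective onto $R^\wedge_u$, I will show that each generator is a limit of polynomials in $u$. The same iteration gives $d^{(i)}_{l,j}=\sum_{s<n}t^{(i)}_{l,j+s}u^s+u^nd^{(i)}_{l,j+n}$, so $d^{(i)}_{l,j}$ is $u$-adically the image of $\rho(d^{(i)}_{l,j})$, and likewise for $q_j$. For $\eta_i$, I will use that $ud^{(i)}_{0,0}+i^{-1}$ is a unit mod $u$ and that \eqref{eq:relations-eta-d} gives $\eta_i=(1+k_iq_1)(ud^{(i)}_{0,0}+i^{-1})^{-1}$ in the completion. Thus $R^\wedge_u$ is topologically generated over $MU_\ast$ by $u$. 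The two maps are then mutually inverse, since both composites fix $u$ and are continuous.
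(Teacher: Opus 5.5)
Your check that $\rho$ respects \eqref{eq:relations-d}--\eqref{eq:relations-eta-q} agrees with the paper. So does your overall strategy for $\widehat\rho$: a map $MU_\ast[[u]]\to R^\wedge_u$ that kills $[p]u$ because $q_0=\sum_j c_ju^j$ there, combined with topological generation of $R^\wedge_u$ by $u$.

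The gap is the step for $\eta_i$. You claim that $ud^{(i)}_{0,0}+i^{-1}$ is a unit modulo $u$. But modulo $u$ it is the positive integer $i^{-1}$, which is not a unit in $MU_\ast$ unless $i=1$; already for $p=3$, $i=2$ one has $i^{-1}=2$. Concretely, compose $\widehat\rho$ with $MU_\ast[[u]]/[p]u\to MU_\ast$, $u\mapsto 0$ (well defined since $c_0=0$). This sends $ud^{(i)}_{0,0}+i^{-1}$ to $i^{-1}$, so it is not invertible in $R^\wedge_u$, and \eqref{eq:relations-eta-d} cannot be solved for $\eta_i$.

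This cannot be repaired using \eqref{eq:relations-eta-d} alone, and your surjectivity argument never uses \eqref{eq:relations-eta-q}. Let $R'$ be the ring obtained by dropping that relation. Then $R'/(u)\cong MU_\ast[\eta_i\mid i\neq 1]/\bigl(i^{-1}\cdot(\eta_i-i)\mid i\neq 1\bigr)$. In this quotient $\eta_i$ does not lie in the image of $MU_\ast$ for $i\neq 1$, so $\eta_i$ is not in the closure of the image of $MU_\ast[u]$ in $R'^\wedge_u$. Any argument that works for $R'$ therefore cannot prove the claim.

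The missing idea is to combine the two $\eta$-relations so as to eliminate the non-unit $i^{-1}$. Multiply \eqref{eq:relations-eta-d} by $i$, using $i\cdot i^{-1}=1+k_ip$. Subtract $k_i$ times \eqref{eq:relations-eta-q}, and substitute $q_1=p+uq_2$ from \eqref{eq:relations-q}. This yields, already in $R$,
\[\eta_i\bigl(1+u(i\,d^{(i)}_{0,0}-k_iq_2)\bigr)=i.\]
The factor $1+u(\cdots)$ is a unit in $R^\wedge_u$. Hence $\eta_i=i\bigl(1+u(i\,d^{(i)}_{0,0}-k_iq_2)\bigr)^{-1}$ lies in the closure of the image of $MU_\ast[u]$, since $d^{(i)}_{0,0}$ and $q_2$ do.

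This is exactly the paper's manipulation of \eqref{eq:1} and \eqref{eq:2}; see also Lemma \ref{lema:congruent}. With it in place, the rest of your argument goes through: topological generation by $u$, and both composites being the identity.
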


\begin{proof}
    We first prove that $\rho$ defined in this way satisfies the relations in \cref{teo:MU-calculation}. For \eqref{eq:relations-d}, we have
    \begin{align*}
        \rho(d_{l,j}^{(i)})-t_{l,j}^{(i)} & = \sum_{k\geq 0} t_{l,j+k}^{(i)}u^k - t_{l,j}^{(i)}\\
    & =t_{l,j}^{(i)}+\sum_{k\geq 1}t_{l,j+k}^{(i)}u^k-t_{l,j}^{(i)}\\
    & =\sum_{k\geq 1}t_{l,j+k}^{(i)}u^k\\
    & =u\sum_{k\geq 0}t_{l,j+1+k}^{(i)}u^k\\
    & =u\rho(d_{l,j+1}^{(i)}).
    \end{align*}
Relation \eqref{eq:relations-d-0} holds since $t_{0,j}^{(1)}=0$ for all $j\geq 0$. For \eqref{eq:relations-q}, we have
\[ \rho(q_{j})-c_{j}= \sum_{k\geq 0} c_{l,j+k}u^k - c_{j}=u\sum_{k\geq 0}c_{j+1+k}u^k=u\rho(q_{j+1}).\]
The relation $q_0=0$ follows since $\rho(q_0)=[p]u$ which is $0$ in $MU_\ast[[u]]/ [p]u$. Similarly, the relation $\eta_1=1$ follows since $a_{0,j}=0$ for $j\geq 2$ and $a_{0,1}=1$. Now we prove that relation \eqref{eq:relations-eta-d} holds. Since $\rho(\eta_i)$ is obtained by lowering by $1$ the power of $u$ in $[i]u$, we write
\[\rho(\eta_i)=\frac{[i]u}{u}.\]
From \eqref{eq:f_i}, we have the following in $MU_\ast[[u]]$
\[f_i(u)[i]u=[1+k_ip]u=u+_F [k_i]([p]u)=u+k_i [p]u+\cdots\]
The term $[p]u$ appears with power at least $2$ in all the higher terms of $[1+k_ip]u$. Thus, after lowering by $1$ the degree of $u$ in $[i]u$, we get
\[f_i(u)\frac{[i]u}{u}=1+k_i\frac{[p]u}{u}+\cdots\]
where $\frac{[p]u}{u}=\rho(q_1)$. Passing to the quotient $MU_\ast[[u]]/[p]u$, this expression becomes
\[f_i(u)\rho(\eta_i)=1+k_i\rho(q_1).\]
Now we identify $f_i(u)$ in terms of $d_{0,0}^{(i)}$. Since $([i]u)^{-1}=u^{-1}f_i(u)$ in $(MU[[u]]/[p]u)[u^{-1}]$, we get
\[([i]u)^{-1}=\sum_{k\geq-1}t_{0,k}^{(i)}u^k=u^{-1}\sum_{k\geq0}a^{(i^{-1})}_{0,k+1}([i]u)^k.\]
Over $MU_\ast[[u]]/[p]u$, we have
\[f_i(u)=\sum_{k\geq -1}t_{0,k}^{(i)}u^{k+1}= t_{0,-1}^{(i)}+u\sum_{k\geq0} t_{0,k}^{(i)}u^k=i^{-1}+u\rho(d_{0,0}^{(i)}).\]
This proves the desired relation. Finally, for \eqref{eq:relations-eta-q} note that $u\rho(q_1)=[p]u$, hence 
\begin{align*}
    \rho(\eta_i)\rho(q_1)& =(\sum_{k\geq 0}a_{0,1+k}^{(i)}u^k)\rho(q_1)=i\rho(q_1)+\sum_{k\geq 1}a_{0,k+1}^{(i)}u^k\rho(q_1)\\
    & =i\rho(q_1)+(\sum_{k\geq 0}a_{0,k+2}^{(i)}u^k)[p]u=i\rho(q_1).
\end{align*}
Now we show that $\rho$ induces an isomorphism $\widehat{\rho}:R^{\wedge}_u\to MU_\ast[[u]]/[p]u$. Since $MU_\ast[[u]]/[p]u$ is completed at $u$, the map $\rho$ induces a map 
\[\widehat{\rho}:R^{\wedge}_u\to MU_\ast[[u]]\to MU_\ast[[u]]/[p]u.\] 
We have a map $\widehat{\psi}:MU_\ast[[u]]\to R^{\wedge}_u$ induced by the map $\psi: MU_\ast[u]\to R$, given by $\psi(u)=u$. The composite $\widehat{\rho}\widehat{\psi}:MU_\ast[[u]]\to MU_\ast[[u]]/[p]u$ is the quotient map. To show that $\widehat{\psi}$ is onto, by induction on $n\geq 1$, relation \eqref{eq:relations-d} gives 
\[d_{l,j}^{(i)}=\sum_{k= 0}^{n-1}t_{l,j+k}^{(i)}u^k+u^nd_{l,j+n}^{(i)}.\]
The sequence $\{u^nd_{l,j+n}^{(i)}\}$ converges to $0$ in $R^{\wedge}_u$. So, we get
\[d_{l,j}^{(i)}=\sum_{k\geq0} t_{l,j+k}^{(i)}u^k\]
in $R^{\wedge}_u$. So, $d_{l,j}$ is in the image of $\widehat{\psi}$. A similar argument shows that $q_j$ is in the image of $\widehat{\psi}$ and that $q_j=\sum_{k\geq 0}c_{j+k} u^k$ in $R^{\wedge}_u$. In particular, it follows that $\widehat{\psi}([p]u)=q_0=0$, thus the map $\widehat{\psi}$ factors through a map
\[\widehat{\psi}:MU_\ast[[u]]/[p]u\to R^{\wedge}_u\]
It remains to show that $\eta_i$ is in the image of $\widehat{\psi}$. Rewriting relations \eqref{eq:relations-eta-d} and \eqref{eq:relations-eta-q} with what we just proved, we get in $R^{\wedge}_u$
\begin{equation*}
    \eta_i(i^{-1}+t_{0,0}^{(i)}u+\cdots) = 1+k_iq_1\tag{*}\label{eq:1}
\end{equation*}
\begin{equation*}
    \eta_i(p+c_{2}u+\cdots) = iq_1.\tag{**}\label{eq:2}
\end{equation*}
Multiplying \eqref{eq:1} by $i$ and \eqref{eq:2} by $k_i$ and subtracting the resulting expressions, we get
\[\eta_i(1+(it_{0,0}^{(i)}-k_ic_2)u+\cdots)=i\]
The power series $1+(it_{0,0}^{(i)}-k_ic_2)u+\cdots$ is a unit in $MU_\ast[[u]]$, so it is also a unit in $R^{\wedge}_u$. Therefore, we obtain
\[\eta_i=i(1+(it_{0,0}^{(i)}-k_ic_2)u+\cdots)^{-1}\]
and the right hand side is clearly in the image of $\widehat{\psi}$.
\end{proof}

\begin{lema}\label{lema:localization-R}
There is a map of $MU_\ast$-algebras
\begin{align*}
    \kappa: R & \to MU_\ast[u_i^{\pm 1},d_l^{(i)}] \\
u & \mapsto u \\
d_{l,j}^{(i)} & \mapsto u^{-j}d_{l}^{(i)}-\sum_{k=1}^{l+1+j}t_{l,j-k}^{(i)}u^{-k} \\
q_j & \mapsto -\sum_{k=1}^{j}c_{j-k}u^{-k} \\
\eta_i & \mapsto u^{-1}u_i 
\end{align*}
which induces an isomorphism $\overline{\kappa}:R[u^{-1}]\to MU_\ast[u_i^{\pm 1},d_l^{(i)}]$.
\end{lema}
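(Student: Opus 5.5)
First I check that $\kappa$ respects the defining relations of $R$ from Theorem \ref{teo:MU-calculation}. Then I build an inverse to $\overline{\kappa}$ on $R[u^{-1}]$. The guiding fact is that $\kappa(d_{l,j}^{(i)})$ is, up to the shift $u^{-j}$, the tail of the Laurent series $\phi(d_l^{(i)})=\sum_{j\geq -(l+1)}t^{(i)}_{l,j}u^j$ with the negative-degree part removed. Likewise $\kappa(q_j)$ is $-u^{-j}$ times the truncation $\sum_{k<j}c_ku^k$ of $[p]u$.

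For relation \eqref{eq:relations-d} I compute
\[\kappa(d_{l,j}^{(i)})-t_{l,j}^{(i)}=u^{-j}d_l^{(i)}-\sum_{k=0}^{l+1+j}t^{(i)}_{l,j-k}u^{-k}=u\,\kappa(d^{(i)}_{l,j+1}),\]
which is a reindexing of the finite sum. For \eqref{eq:relations-d-0}, note that $d_0^{(1)}=u_1^{-1}$. I will identify $u_1$ with $u$, since both are the Euler class of $\C_1$. The sum then telescopes to $u^{-j}u^{-1}-t^{(1)}_{0,-1}u^{-j-1}=0$, using $t^{(1)}_{0,-1}=1$ and $t^{(1)}_{0,k}=0$ for $k\ge0$. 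Relations \eqref{eq:relations-q} and \eqref{eq:relations-q-0} follow by the same reindexing and the empty sum.

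Relation \eqref{eq:relations-eta-d} needs $u d_{0,0}^{(i)}+i^{-1}\mapsto d_0^{(i)}u=u_i^{-1}u$. Indeed, the sum collapses to $t^{(i)}_{0,-1}u^{-1}=i^{-1}u^{-1}$, which cancels the $i^{-1}$. The right-hand side is $1+k_i\kappa(q_1)=1-k_ic_0u^{-1}=1$, because $c_0=0$, so both sides agree. For \eqref{eq:relations-eta-q} we have $\kappa(q_1)=-c_0u^{-1}=0$, so that relation holds trivially. Finally \eqref{eq:eta-1} holds because $u_1=u$.

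Since $u$ is a unit in the target, $\kappa$ extends to $\overline\kappa$ on $R[u^{-1}]$. For the inverse, I define $\lambda:MU_\ast[u_i^{\pm1},d_l^{(i)}]\to R[u^{-1}]$ by
\[u_i\mapsto u\eta_i,\qquad d_l^{(i)}\mapsto d_{l,0}^{(i)}+\sum_{k=1}^{l+1}t^{(i)}_{l,-k}u^{-k}.\]
Since the source is polynomial in the $d_l^{(i)}$ and Laurent in the $u_i$, the only thing to check is that $u\eta_i$ is invertible in $R[u^{-1}]$. This holds because $q_1=0$ there, as $uq_1=q_0-c_0=0$. Relation \eqref{eq:relations-eta-d} then exhibits $\eta_i^{-1}=ud^{(i)}_{0,0}+i^{-1}$. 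For $i=1$, one checks that $\lambda(d_0^{(1)})=u^{-1}$ and $\lambda(u_1)=u$, so these are consistent.

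It remains to check that the two composites are the identity. The composite $\overline\kappa\lambda$ is the identity on generators by a direct computation. For $\lambda\overline\kappa$, the key point is that in $R[u^{-1}]$ relation \eqref{eq:relations-d} iterates to express each $d^{(i)}_{l,j}$ through $d_{l,0}^{(i)}$ and powers of $u^{-1}$. Similarly \eqref{eq:relations-q} gives $q_j=-\sum_{k=1}^j c_{j-k}u^{-k}$ starting from $q_0=0$. Hence $R[u^{-1}]$ is generated by $u^{\pm1}$, the $d_{l,0}^{(i)}$ and the $\eta_i$, and on these generators $\lambda\overline\kappa$ is visibly the identity.

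The main obstacle is the index bookkeeping for $\kappa(d_{l,j}^{(i)})$: the lower limit $-(l+1)$ of the $t$'s must match the range of the sum. The degenerate case $i=1$, $l=0$ also requires $u_1=u$, so I would fix that identification before starting.
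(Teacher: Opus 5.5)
Your proposal is correct and follows essentially the same route as the paper. You check the relations of Theorem \ref{teo:MU-calculation} under $\kappa$ by reindexing, using $c_0=0$, $t^{(1)}_{0,-1}=1$, $t^{(1)}_{0,j}=0$ for $j\geq 0$ and $u_1=u$, and you invert $\overline{\kappa}$ by the same map $u_i\mapsto u\eta_i$, $d_l^{(i)}\mapsto d_{l,0}^{(i)}+\sum_{k=1}^{l+1}t^{(i)}_{l,-k}u^{-k}$. The only differences are that you get $q_1=0$ in $R[u^{-1}]$ directly from $uq_1=q_0-c_0=0$ rather than citing Lemma \ref{lema:bounded-torsion}, and you verify the composite on the smaller generating set $u^{\pm 1}, d_{l,0}^{(i)}, \eta_i$ instead of on every $d_{l,j}^{(i)}$; both are harmless streamlinings.
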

\begin{proof}
    Again, we start by proving that $\kappa$ defined in this way satisfies the relations in $R$. For relation \eqref{eq:relations-d}, we have
    \begin{align*}
          u\kappa(d_{l,j+1}^{(i)}) & =u(u^{-j-1}d_l^{(i)}-\sum_{k=1}^{l+1+j+1}t_{l,j+1-k}^{(i)}u^{-k})\\
          & =u^{-j}d_l^{(i)}-t_{l,j}^{(i)}-\sum_{k=2}^{l+1+j+1}t_{l,j+1-k}^{(i)}u^{1-k}\\
          & =u^{-j}d_l^{(i)}-\sum_{s=1}^{l+1+j}t_{l,j-s}^{(i)}u^{-s}-t_{l,j}^{(i)}\\
          & =\kappa(d_{l,j}^{(i)})-t_{l,j}^{(i)}.
    \end{align*}
Note that $\kappa(d_{0,j}^{(1)})=u^{-j}d_0^{(1)}-u^{-1-j}$ because $t_{0,j}^{(1)}=0$ for $j\geq 0$. Since $d_0^{(1)}=u_1^{-1}=u^{-1}$, we get $\kappa(d_{0,j}^{(1)})=0$. Similarly, for \eqref{eq:relations-q} we get
\[u\kappa(q_{j+1})=-u\sum_{k=1}^{j+1}c_{j+1-k}u^{-k}=-c_j - \sum_{k=2}^{j+1}c_{j+1-k}u^{1-k}=-c_j-\sum_{s=1}^j c_{j-s}u^{-s}=\kappa(q_j)-c_j.\]
The relation $q_0=0$ follows by convention, and $\eta_1=1$ clearly holds as $u_1=u$. For \eqref{eq:relations-eta-d}, note that $\kappa(q_1)=0$ since $c_0=0$. On the other hand, we have
\[\kappa(\eta_i)(u\kappa(d_{0,0}^{(i)})+i^{-1})=u^{-1}u_i(u(d_0^{(i)}-t_{0,-1}^{(i)}u^{-1})+i^{-1})\]
\[=u^{-1}u_i(u(u_i^{-1}-i^{-1}u^{-1})+i^{-1})=1=1+k_i\kappa(q_1).\]
Relation \eqref{eq:relations-eta-q} follows because $\kappa(q_1)=0$. 

It remains to prove that $\kappa$ induces an isomorphism $\overline{\kappa}:R[u^{-1}]\to MU_\ast[u_i^{\pm 1}, d_l^{(i)}]$. Since $u=u_1$ is a unit in $MU_\ast[u_i^{\pm 1}, d_l^{(i)}]$, from the universal property of localizations we get a map 
\begin{align*}
    \overline{\kappa}:R[u^{-1}] & \to MU_\ast[u_i^{\pm 1}, d_l^{(i)}]
\end{align*}
By Lemma \ref{lema:bounded-torsion}, every $q_1$-divisible element in $R$ becomes zero in $R[u^{-1}]$. In particular, relation \eqref{eq:relations-eta-d} gives that $\eta_i$ is a unit in $R[u^{-1}]$. With this in mind we define
\begin{align*}
    \overline{\kappa}^{-1}:MU_\ast[u_i^{\pm 1},d_l^{(i)}] & \to R[u^{-1}] \\
    u_i & \mapsto u\eta_i \\
    u_i^{-1} & \mapsto d_{0,0}^{(i)}+u^{-1}i^{-1} \\
    d_l^{(i)}& \mapsto d_{l,0}^{(i)}+\sum_{k=1}^{l+1}t_{l,-k}^{(i)}u^{-k}.
\end{align*}
This map is consistent with $u=u_1$ as $\eta_1=1$. Similarly, it is consistent with the fact that $d_0^{(i)}=u^{-1}$ since $\overline{\kappa}^{-1}(d_0^{(i)})=d_{0,0}^{(i)}+u^{-1}t_{0,-1}^{(i)}=d_{0,0}^{(i)}+u^{-1}i^{-1}$. On the other hand, $\overline{\kappa}^{-1}(u_i^{-1})=u^{-1}\eta_i^{-1}$ but from \eqref{eq:relations-eta-d}, we see that $\eta_i^{-1}=ud_{0,0}^{(i)}+i^{-1}$. It is easy to see that $\overline{\kappa}\circ \overline{\kappa}^{-1}$ is the identity. For the the other composition, it is straightforward to see that $\overline{\kappa}^{-1}(\overline{\kappa}(\eta_i))=\eta_i$, and $\overline{\kappa}^{-1}(\overline{\kappa}(q_j))=q_j$, using the fact that in $R[u^{-1}]$ we can write relations \eqref{eq:relations-q} as $q_{j+1}=u^{-1}(q_j-c_j)$. Finally,
\[\overline{\kappa}^{-1}(\overline{\kappa}(d_{l,j}^{(i)}))=u^{-j}(d_{l,0}^{(i)}+\sum_{k=1}^{l+1}t_{l,-k}u^{-k})-\sum_{k=1}^{j+1+l}t_{l,j-k}^{(i)}u^{-k}.\]
Using \eqref{eq:relations-d}, we have $d_{l,0}^{(i)}=\sum_{k= 0}^{j-1}t_{l,k}^{(i)}u^k+u^j d_{l,j}^{(i)}$. Therefore,
\[\overline{\kappa}^{-1}(\overline{\kappa}(d_{l,j}^{(i)}))=d_{l,j}^{(i)}+\sum_{k= 0}^{j-1}t_{l,k}^{(i)}u^{k-j}+\sum_{k=1}^{l+1}t_{l,-k}u^{-k-j}-\sum_{k=1}^{j+1+l}t_{l,j-k}^{(i)}u^{-k}\]
\[=d_{l,j}^{(i)}+\sum_{k= 0}^{j-1}t_{l,k}^{(i)}u^{k-j}+\sum_{k=1}^{l+1}t_{l,-k}u^{-j-k}- \sum_{k=1}^{j}t_{l,j-k}u^{-k}-\sum_{k=1}^{l+1}t_{l,-k}u^{-j-k}\]
\[=d_{l,j}^{(i)}.\]
\end{proof}

\begin{proof}[Proof of Theorem \ref{teo:MU-calculation}]
    All together, the previous lemmas state precisely that the right side and lower left corner of Proposition \ref{prop:pullback-strickland} are isomorphic in a compatible way (that is, all the resulting diagrams commute) to the respective terms in \eqref{eq:11.1}. Thus, we get a natural isomorphism $R\to MU^{C_p}_\ast$.
\end{proof}
We also record the following lemma.
\begin{lema}\label{lema:congruent}
    We have $\eta_i\equiv i$ mod $u$ in $R$. In particular, $R/(u)\cong MU_\ast$.
\end{lema}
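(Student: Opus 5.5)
The plan is to reduce the presentation of $R$ modulo $u$ and read off both claims directly from the relations of Theorem \ref{teo:MU-calculation}.

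\textbf{Step 1: reduce the relations mod $u$.} Setting $u=0$, relation \eqref{eq:relations-d} gives $d_{l,j}^{(i)}\equiv t_{l,j}^{(i)}$, and relation \eqref{eq:relations-q} gives $q_j\equiv c_j$. Hence every $d_{l,j}^{(i)}$ and every $q_j$ is congruent mod $u$ to an element of $MU_\ast$. In particular $q_1\equiv c_1=p$, since the $p$-series begins $[p]u=pu+\cdots$. Relation \eqref{eq:relations-eta-d} then reads
\[\eta_i\, i^{-1}\equiv 1+k_ip \pmod u,\]
and relation \eqref{eq:relations-eta-q} reads
\[\eta_i\, p\equiv ip \pmod u.\]

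\textbf{Step 2: show $\eta_i\equiv i$.} Multiply the first congruence by $i$ and the second by $k_i$, then subtract. Using $i\,i^{-1}-k_ip=1$, this gives
\[\eta_i=\eta_i(i\,i^{-1}-k_ip)\equiv i(1+k_ip)-k_i\,ip=i \pmod u.\]
This is the same manipulation used in the proof of the completion lemma, now taken mod $u$. For $i=1$ the claim is just \eqref{eq:eta-1}.

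\textbf{Step 3: identify $R/(u)$.} By Steps 1 and 2, the composite $MU_\ast\to R\to R/(u)$ is surjective. For injectivity I would use the map $\rho:R\to MU_\ast[[u]]/[p]u$ and reduce it mod $u$. Since $[p]u\in(u)$, we have
\[(MU_\ast[[u]]/[p]u)/(u)\cong MU_\ast[[u]]/(u)\cong MU_\ast.\]
The induced map $R/(u)\to MU_\ast$ is an $MU_\ast$-algebra map, so it is the identity on the image of $MU_\ast$. It is therefore a left inverse of $MU_\ast\to R/(u)$, which gives injectivity. Alternatively, one can check directly that sending $d_{l,j}^{(i)}\mapsto t_{l,j}^{(i)}$, $q_j\mapsto c_j$, $\eta_i\mapsto i$, $u\mapsto0$ respects all the relations. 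This reduces to $t_{0,j}^{(1)}=0$, $c_0=0$, $i\,i^{-1}=1+k_ip$ and $ip=ip$.

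\textbf{Main obstacle.} The only delicate point is bookkeeping for the constant terms: confirming $c_0=0$, $c_1=p$, $t_{0,-1}^{(i)}=i^{-1}$, and that no stray negative powers of $u$ enter when reducing \eqref{eq:relations-eta-d}. Everything else is formal.
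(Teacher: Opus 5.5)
Your proof is correct, and for $\eta_i\equiv i$ it is the paper's argument. The paper multiplies \eqref{eq:relations-eta-d} by $i$, substitutes $p=q_1-uq_2$, and uses \eqref{eq:relations-eta-q} to get the explicit identity $\eta_i-i=u(\eta_ik_iq_2-\eta_id_{0,0}^{(i)}i)$ in $R$. This is your combination $i\cdot\eqref{eq:relations-eta-d}-k_i\cdot\eqref{eq:relations-eta-q}$, done before reducing mod $u$.

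For $R/(u)\cong MU_\ast$, the paper only notes that every generator is congruent to an element of $MU_\ast$ mod $u$, which is surjectivity of $MU_\ast\to R/(u)$. Your Step 3 retraction, via $\rho$ mod $u$ or the direct map $d_{l,j}^{(i)}\mapsto t_{l,j}^{(i)}$, $q_j\mapsto c_j$, $\eta_i\mapsto i$, $u\mapsto 0$, supplies the injectivity the paper leaves implicit.
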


\begin{proof}
   From \eqref{eq:relations-q} we get $p=q_1-uq_2$. Then, multiplying relation \eqref{eq:relations-eta-d} by $i$ we get
   \begin{align*}
       \eta_i(ud_{0,0}^{(i)}i+1+k_ip)& =i+k_iiq_1\\
       \eta_i(ud_{0,0}^{(i)}i+1+k_i(q_1-uq_2)) & =i+k_iiq_1\\
       \eta_i+k_i(\eta_i q_1-iq_1)-i & =u(\eta_i k_i q_2-\eta_i d_{0,0}^{(i)}i)
   \end{align*}
   The term $\eta_i q_1-iq_1$ is zero by \eqref{eq:relations-eta-q}, so we get
   \begin{align*}
       \eta_i-i =u(\eta_i k_i q_2-\eta_i d_{0,0}^{(i)}i).
   \end{align*}
    The second part of the statement follows from the fact that relations \eqref{eq:relations-d} and \eqref{eq:relations-q} imply that the generators $d_{l,j}^{(i)}$ and $q_j$ are congruent to some element in $MU_\ast$ mod $u$.
\end{proof}

\begin{remark}\label{remark:kernel}
    From Lemma \ref{lema:bounded-torsion}, we have that $\ker(\kappa)=(q_1)$. Moreover, from Lemma \ref{lema:congruent} every element $x\in MU^{C_p}_\ast$ is congruent to an of $MU_\ast$ modulo $u$. Therefore $(q_1)\subset MU^{C_p}_\ast$ is the $MU_\ast$-module generated by $q_1$.
\end{remark}

\vspace{0.5cm}

\section{Calculation of \texorpdfstring{$\Omega^{C_p}_\ast$}{Omega Cp *}}\label{section4}
In this section, we prove the main theorem of this paper, which describes the $MU_\ast$-algebra $\Omega^{C_p}_\ast$ in terms of its generators and relations. In the pullback diagram \eqref{eq:09}, the lower left corner now has the presentation as given by Theorem \ref{teo:MU-calculation}. The lower horizontal map in this diagram, which is localization at $u$, is described by $\kappa$ in Lemma \ref{lema:localization-R}.

\begin{teo}\label{thm:generators-geometric-cobordism}
    As an $MU_\ast$-algebra, $\Omega^{C_p}_\ast$ has generators
    \[d_{l,j}^{(i)}, q_j \text{, with } |d_{l,j}^{(i)}|=2(l+j+1), |q_j|=2(j-1)\]
which are subject of the following relations
    \begin{align*}
        d^{(i)}_{l,j+1}(d^{(i')}_{k,s}-t^{(i')}_{k,s}) & =d^{(i')}_{k,s+1}(d^{(i)}_{l,j}-t^{(i)}_{l,j})\tag{20}\label{eq:19} \\
    d^{(i)}_{l,j+1}(q_k-c_k) & =q_{k+1}(d^{(i)}_{l,j}-t^{(i)}_{l,j}) \tag{21}\label{eq:20}\\
    q_{j+1}(q_k-c_k) & =q_{k+1}(q_j-c_j) \tag{22}\label{eq:21}\\
    q_0 & =0 \tag{23}\label{eq:22}\\
    d_{0,j}^{(1)} & =0 \text{ for all } j\geq 0 \tag{24}\label{eq:23}.
    \end{align*}
\end{teo}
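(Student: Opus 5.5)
We use the pullback square \eqref{eq:09}: $\Omega^{C_p}_\ast$ is the subring of $MU^{C_p}_\ast=R$ consisting of the elements $x$ with $\kappa(x)\in MU_\ast[u_i^{-1},d_l^{(i)}]$. By \cref{remark:kernel} the kernel of $\kappa$ is the $MU_\ast$-module generated by $q_1$. Since $\kappa(q_1)=0$, this kernel lies in $\Omega^{C_p}_\ast$. Hence it is enough to determine $\kappa^{-1}(MU_\ast[u_i^{-1},d_l^{(i)}])$.

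\textbf{Step 1 (the elements lie in the subring).} By \cref{lema:localization-R},
\[\kappa(d_{l,j}^{(i)})=u^{-j}d_l^{(i)}-\sum_{k=1}^{l+1+j}t_{l,j-k}^{(i)}u^{-k}.\]
This lies in $MU_\ast[u_i^{-1},d_l^{(i)}]$, because $u^{-1}=u_1^{-1}$ is a nonnegative power there. Likewise $\kappa(q_j)=-\sum_{k=1}^{j}c_{j-k}u^{-k}$ lies there. So the $MU_\ast$-subalgebra $S\subset R$ generated by the $d_{l,j}^{(i)}$ and $q_j$ is contained in $\Omega^{C_p}_\ast$. We treat $u$ and $\eta_i$ as excluded generators.

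\textbf{Step 2 (surjectivity onto the pullback).} We first show that $\kappa|_S$ hits $MU_\ast[u_i^{-1},d_l^{(i)}]$. We have $\kappa(d_{0,0}^{(1)}+\dots)$: indeed $d_0^{(1)}=u^{-1}$ is hit up to constants via $\kappa(q_1-\dots)$, and more directly $u_i^{-1}=\kappa(d_{0,0}^{(i)})+i^{-1}u^{-1}$. Also $u^{-1}$ is $\kappa$ of a suitable combination, e.g. $-\kappa(q_2)/c_0$ fails since $c_0=0$, so we use $d_{1,0}^{(1)}$ or $\kappa(d_{0,0}^{(i)})$ combinations. Finally $d_l^{(i)}=\kappa(d_{l,0}^{(i)})+\sum t_{l,-k}^{(i)}u^{-k}$.

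We then argue by induction on the $u^{-1}$-degree. Take $x\in R$ with $\kappa(x)$ in the subring. We subtract a polynomial $y$ in the $d_{l,j}$ and $q_j$ with $\kappa(y)=\kappa(x)$, using that the monomials $u^{-j}d_l^{(i)}$ and $u^{-j}$ are leading terms of $\kappa(d_{l,j}^{(i)})$ and $\kappa(q_{j+1})$ (the latter has leading coefficient $-c_0$, so instead we use $d^{(1)}_{l,j}$ to produce pure powers of $u^{-1}$). Then $x-y\in\ker\kappa=MU_\ast q_1\subset S$. The main obstacle here is producing negative powers of $u$ and the $u_i^{-1}$ factors with $MU_\ast$ coefficients. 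We would handle this by a filtration argument on $u^{-1}$-degree, together with \cref{lema:congruent}, which lets us reduce everything to $MU_\ast$ modulo $u$.

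\textbf{Step 3 (relations).} The relations \eqref{eq:19}--\eqref{eq:21} hold in $R$. Using \eqref{eq:relations-d} and \eqref{eq:relations-q}, each side of each relation equals $u\,d_{l,j+1}^{(i)}d_{k,s+1}^{(i')}$, $u\,d_{l,j+1}^{(i)}q_{k+1}$, and $u\,q_{j+1}q_{k+1}$ respectively. Relations \eqref{eq:22} and \eqref{eq:23} are inherited directly.

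For completeness, let $T$ be the abstract algebra defined by the stated presentation, and consider the map $T\to S$. We show injectivity by comparing with the pullback. We will localize: in $T$, the relations say that the ratios $(d_{l,j}-t_{l,j})/d_{l,j+1}$ and $(q_k-c_k)/q_{k+1}$ all agree, and this common ratio formally plays the role of $u$. We then construct an inverse $S\to T$ by working inside $T[u]$ with $u$ adjoined as this common ratio. We use Step 2 to write any element of the pullback as $\kappa$ of a polynomial plus an $MU_\ast$-multiple of $q_1$. Finally, we check that the kernel in $T$ of the map to $MU_\ast[u_i^{-1},d_l^{(i)}]$ is $MU_\ast q_1$, mirroring \cref{lema:bounded-torsion}; this is the delicate step, and we would attempt it by the same filtration-by-$k$ argument used there.
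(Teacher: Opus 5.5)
Your outline is the paper's: realize $\Omega^{C_p}_\ast$ as $\kappa^{-1}(MU_\ast[u_i^{-1},d_l^{(i)}])$, eliminate leading terms using the $\kappa$-images of the $d^{(i)}_{l,j}$ and $q_j$, absorb the remainder into $\ker\kappa=MU_\ast q_1$, and compare kernels. But Step~2 rests on a false claim and misses the one non-formal input.

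\textbf{Step 2.} The map $\kappa$ does not hit all of $MU_\ast[u_i^{-1},d_l^{(i)}]$. Neither $u^{-1}=u_1^{-1}$ nor $u_i^{-1}=\kappa(d^{(i)}_{0,0})+i^{-1}u^{-1}$ lies in $\kappa(MU^{C_p}_\ast)$. Your workarounds also fail:
\begin{itemize}
\item $\kappa(q_1)=0$.
\item The leading term of $\kappa(q_{j+1})$ is $-p\,u^{-j}$, not $-c_0u^{-j-1}$, since $c_0=0$ and $c_1=p$.
\item The $d^{(1)}_{l,j}$ cannot produce pure powers of $u^{-1}$: $d^{(1)}_{0,j}=0$, and every other $\kappa(d^{(1)}_{l,j})$ has leading term $u^{-j}d^{(1)}_l$ with $l\ge1$.
\end{itemize}
So once every monomial involving some $u_i^{-1}$ ($i\ge2$) or some $d^{(i)}_l$ ($l\ge1$) is eliminated, you are left with $\kappa(x_R)=c\,u^{-n}+(\text{lower powers of }u^{-1})$. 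The only tool left is $q_{n+1}$, whose leading coefficient is $-p$.

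The missing idea is that $p\mid c$ whenever $n\ge1$. The paper imports this from Kosniowski's Lemma~3.4. Your appeal to Lemma~\ref{lema:congruent} can in fact prove it, but only if aimed at exactly this point:
\begin{itemize}
\item Let $P\in MU_\ast[u]\subset MU^{C_p}_\ast$ be the polynomial with $\kappa(P)=u^n\kappa(x_R)$.
\item Then $u^nx_R-P\in\ker\kappa=MU_\ast q_1$, say $u^nx_R-P=wq_1$ with $w\in MU_\ast$.
\item Reduce modulo $u$ via $MU^{C_p}_\ast/(u)\cong MU_\ast$, under which $q_1\mapsto c_1=p$. This gives $-c=wp$.
\end{itemize}
The case $c=1$ is exactly why $u^{-1}$ is not in the image.

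\textbf{Step 3 (injectivity).} This part is only a plan, and the suggested route is circular. The argument of Lemma~\ref{lema:bounded-torsion} works because $u$ is a generator of $MU^{C_p}_\ast$, so every $d^{(i)}_{l,j}$, $q_j$ with $j<k$ is a polynomial in $u$ and the top-index generators. Your abstract algebra $T$ has no $u$. Adjoining one as the ``common ratio'' only helps once you know $T$ injects into the enlarged ring, and that is the statement to be proved.

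The paper instead argues by normal forms inside $T$:
\begin{itemize}
\item Relations \eqref{eq:19}--\eqref{eq:21} let you move the second index from one factor of a monomial to another, modulo shorter monomials.
\item Together with $q_0=0=c_0$, they give $q_1d^{(i)}_{l,j}=t^{(i)}_{l,j}q_1$ and $q_1q_k=c_kq_1$.
\item Hence $T$ is spanned over $MU_\ast$ by the $q_k$ and the monomials $d_{I,j}=d^{(i_1)}_{l_1,0}\cdots d^{(i_{N-1})}_{l_{N-1},0}d^{(i_N)}_{l_N,j}$.
\item Their $\kappa$-images have pairwise distinct leading terms: $u^{-j}d^{(i_1)}_{l_1}\cdots d^{(i_N)}_{l_N}$ with all $(l_r,i_r)\neq(0,1)$, and $-p\,u^{1-k}$ for $k\ge2$. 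Meanwhile $\kappa(q_1)=0$.
\item Since $MU_\ast$ is a domain, the kernel of $T\to MU_\ast[u_i^{\pm1},d_l^{(i)}]$ is $MU_\ast q_1$. This equals $\ker(\kappa|_{\Omega^{C_p}_\ast})$, and the Four Lemma makes $T\to\Omega^{C_p}_\ast$ injective.
\end{itemize}
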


\begin{remark}\label{remark:4.2}
    As with Theorem \ref{teo:MU-calculation}, we can omit the generators $d_{0,j}^{(i)}$ for $j\geq 0$, and $q_0$, by adding the relations 
    \begin{align*}\tag{25}\label{eq:24}
        q_1(q_k-c_k) & =0
    \end{align*}
    \begin{align*}\tag{26}\label{25}
        q_1(d_{l,j}^{(i)}-t_{l,j}^{(i)}) & =0
    \end{align*} 
\end{remark}

The proof of this theorem will follow from the next lemmas. 
\begin{lema}\label{lema:generated}
    $\Omega^{C_p}_\ast$ is the $MU_\ast$-subalgebra of $MU^{C_p}_\ast$ generated by $q_j$ and $d_{l,j}^{(i)}$.
\end{lema}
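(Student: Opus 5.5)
We work inside the pullback square \eqref{eq:09}. By Theorem \ref{teo:PT-iso} and the injectivity of $j_\ast$, we regard $\Omega^{C_p}_\ast$ as the subring of $MU^{C_p}_\ast=R$ consisting of those $x$ with $\kappa(x)\in MU_\ast[u_i^{-1},d_l^{(i)}]$. Here $\kappa$ is the localization map of Lemma \ref{lema:localization-R}, which is the bottom map of \eqref{eq:09}. The plan has two inclusions.

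For the first inclusion, we show that $q_j$ and $d_{l,j}^{(i)}$ lie in $\Omega^{C_p}_\ast$. By Lemma \ref{lema:localization-R},
\[\kappa(q_j)=-\sum_{k=1}^{j}c_{j-k}u^{-k}, \qquad \kappa(d_{l,j}^{(i)})=u^{-j}d_l^{(i)}-\sum_{k=1}^{l+1+j}t_{l,j-k}^{(i)}u^{-k}.\]
Both involve only nonpositive powers of $u$. Since $u=u_1$, we have $u^{-1}=u_1^{-1}$, so both expressions lie in $MU_\ast[u_i^{-1},d_l^{(i)}]$. The pullback property then gives the first inclusion.

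For the reverse inclusion, take $x\in\Omega^{C_p}_\ast$. Then $\kappa(x)$ is a polynomial over $MU_\ast$ in the $u_i^{-1}$ and $d_l^{(i)}$. We express each of these generators as $\kappa$ of an element of the subalgebra $S$ generated by the $q_j$ and $d_{l,j}^{(i)}$:
\[u_i^{-1}=d_0^{(i)}=\kappa\bigl(d_{0,0}^{(i)}\bigr)+i^{-1}u^{-1}, \qquad d_l^{(i)}=\kappa\bigl(d_{l,0}^{(i)}\bigr)+\sum_{k=1}^{l+1}t_{l,-k}^{(i)}u^{-k}.\]
The powers $u^{-k}$ are themselves in $\kappa(S)$, because $u^{-1}=u_1^{-1}=d_0^{(1)}$ and
\[\kappa\bigl(d_{1,j}^{(1)}\bigr)=u^{-j}d_1^{(1)}-\cdots,\]
or more simply because $u^{-1}-\kappa(d_{0,0}^{(1)})=u^{-1}$ as $\kappa(d^{(1)}_{0,0})=0$. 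That last route fails, so we need another. We obtain $u^{-1}$ from $\kappa(d_{0,0}^{(i)})+i^{-1}u^{-1}=u_i^{-1}$ with $i$ varying, or directly from $\kappa(d_{l,j}^{(i)})$ for large $j$, by reading off the coefficients of $u^{-k}$ by induction on degree. This step needs care. The cleanest approach is to check that $u^{-1}=\kappa(y)$ for some $y\in S$, using the $q_j$: $\kappa(q_2)=-c_1u^{-1}=-pu^{-1}$ only gives $p u^{-1}$. So we instead use the $d$'s, for example
\[\kappa\bigl(d_{0,0}^{(2)}\bigr)=u_2^{-1}-2^{-1}u^{-1}.\]
Combining all such identities, we obtain a $y\in S$ with $\kappa(x)=\kappa(y)$.

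It remains to show $x-y\in S$. We have $x-y\in\ker\kappa=(q_1)$, and by Remark \ref{remark:kernel} this ideal is the $MU_\ast$-module $MU_\ast\cdot q_1$. Since $q_1\in S$, it follows that $x=y+mq_1\in S$.

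The main obstacle is the surjectivity step: showing that $\kappa(S)$ contains all of $MU_\ast[u_i^{-1},d_l^{(i)}]$, and in particular $u^{-1}=u_1^{-1}$ itself, not merely $p$-multiples of it. I would first verify this in the case $p=2$, where the relevant generator should come from the $d$'s with suitable indices. Failing that, I would argue that $\Omega^{C_p}_\ast\to\Phi^{C_p}\Omega^{C_p}_\ast$ is onto. Surjectivity follows from the isotropy long exact sequence in \eqref{eq:08}: the connecting map lands in $\Omega^{C_p}_{\ast-1}(EC_{p+})$, which is concentrated in odd degrees and vanishes on the even part. Given that, one reduces to lifting generators degree by degree using the explicit formulas above.
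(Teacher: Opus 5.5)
Your framework matches the paper's. You use $\Omega^{C_p}_\ast=\kappa^{-1}(MU_\ast[u_i^{-1},d_l^{(i)}])$, your easy inclusion is correct, and your final step via $\ker\kappa=MU_\ast\{q_1\}$ (Remark \ref{remark:kernel}) is exactly how the paper concludes.

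\textbf{The gap.} The middle step is not merely unproved; as you formulate it, it is false. Neither $\kappa(S)$ nor even $\kappa(\Omega^{C_p}_\ast)$ contains $u^{-1}=u_1^{-1}$. So no combination of identities such as $\kappa(d_{0,0}^{(2)})=u_2^{-1}-2^{-1}u^{-1}$ can produce it. To see this, suppose $\kappa(y)=u^{-1}$ for some $y\in MU^{C_p}_\ast$. Then $uy-1\in\ker\kappa=MU_\ast\{q_1\}$, say $uy=1+aq_1$ with $a\in MU_\ast$. Reduce modulo $u$ using Lemma \ref{lema:congruent}, together with $q_1\equiv c_1=p$ modulo $u$ from \eqref{eq:relations-q}. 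This gives $1+ap=0$ in $MU_\ast$, which is impossible.

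Geometrically, $u^{-1}$ is the fixed point datum of an isolated fixed point with normal representation $\mathbb{C}_1$. The boundary map of the isotropy sequence sends it to the free circle $S(\mathbb{C}_1)$, which generates $MU_1(BC_p)\cong\mathbb{Z}/p$. This is also why your fallback fails. The boundary map lowers degree by one, so even-degree classes land in the odd degrees of $\Omega^{C_p}_\ast(EC_{p+})\cong MU_\ast(BC_p)$, which is exactly where that group is nonzero torsion. Hence $\Omega^{C_p}_\ast\to\Phi^{C_p}\Omega^{C_p}_\ast$ is not onto.

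\textbf{What is needed.} You only have to hit the elements $\kappa(x)$ with $x\in\Omega^{C_p}_\ast$, and you must use a property of such elements. The paper runs a leading-monomial induction in the lexicographic order with $u^{-1}$ smallest.

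First, consider a leading term $c\,u^{-N}m$, where $m$ is a nontrivial monomial in the $u_i^{-1}$ ($i\geq 2$) and the $d_l^{(i)}$. It can be cancelled for arbitrary $c\in MU_\ast$. The factor $u^{-N}$ is absorbed into the second index of one factor, via $\kappa(d_{l,N}^{(i)})=u^{-N}d_l^{(i)}+\text{LOT}$ (with $d_0^{(i)}=u_i^{-1}$ when $l=0$).

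Once the leading term is a pure power, $\kappa(x)=\sum_{k=0}^{N}a_ku^{-k}\in MU_\ast[u^{-1}]$, and the essential extra input is $p\mid a_N$. The paper cites Kosniowski's Lemma 3.4 for this. It also follows from the argument above: $u^Nx-\sum_k a_ku^{N-k}\in MU_\ast\{q_1\}$, and reducing modulo $u$ gives $a_N\in p\,MU_\ast$. One then subtracts $c'q_{N+1}$, using $\kappa(q_{N+1})=-pu^{-N}+\text{LOT}$ with lower terms in $MU_\ast[u^{-1}]$, and continues the induction.

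This $p$-divisibility is precisely the obstruction your surjectivity claim overlooks.
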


\begin{proof}
Let $K_0$ be the subalgebra of $MU_\ast^{C_p}$ generated by $q_j$, and $d_{l,j}^{(i)}$. Since $\Omega^{C_p}_\ast$ is given by a pullback along an inclusion, we have 
\[\Omega^{C_p}_\ast=\kappa^{-1}(MU_\ast[u_i^{- 1},d_l^{(i)}]).\]
Where $\kappa: MU_\ast^{C_p}\to MU_\ast[u_i^{\pm 1},d_l^{(i)}]$ is the map given in Lemma \ref{lema:localization-R}. From the definition of $\kappa$, it is clear that $K_0\subset \Omega^{C_p}_\ast$. Suppose $x\in \Omega^{C_p}$. We will show that $x\in K_0$ by constructing a polynomial $f\in K_0$ such that $\kappa(x)=\kappa(f)$. Consider the lexicographic order on the monomials of $MU_\ast[u_i^{- 1},d_l^{(i)}]$ given by
\[u^{-1}< u_2^{-1}< \cdots < u_{p-1}^{-1}< d_l^{(i)}\]
\[d_l^{(i)}< d_{s}^{(i')} \text{ if $l<s$, or if $l=s$ and $i<i'$ }.\]
With this ordering, we have
\begin{align*}
    \kappa(d_{l,j}^{(i)}) & = u^{-j}d_{l}^{(i)}+\text{LOT}\\
    \kappa(d_{0,j}^{(i)}) & = u^{-j}u^{-1}_i+\text{LOT} \\
    \kappa(q_j) & = -pu^{-j+1}+\text{LOT}
\end{align*}
where LOT means lower order terms. Starting with $x=x_0$, suppose
\[\kappa(x)= c u^{-N}u_2^{-n_2}\cdots u_{p-1}^{-n_{p-1}}(d_{l_{1}}^{(i_1)})^{m_1}\cdots (d_{l_{k}}^{(i_k)})^{m_k}+ \text{LOT}\]
where $c\in MU_\ast$, and $d_{l_1}^{(i_1)}<\cdots <d_{l_k}^{(i_k)}$. We can find $f_0\in K_0$ such that $\text{LM}(\kappa(f_0))=\text{LM}(\kappa(x))$, where LM means leading monomial. For instance, we could take
\[f_0=c (d_{0,0}^{(2)})^{n_2}\cdots (d_{0,0}^{(p-1)})^{-n_{p-1}}(d_{l_{i_1},0}^{(i_1)})^{m_1}\cdots (d_{l_{i_{k-1}},0}^{(i_{k-1})})^{m_{i_{k-1}}}(d_{l_{i_k},0}^{(i_k)})^{m_k-1}(d_{l_{i_k},N}^{(i_k)}).\]
Set $x_1=x-f_0$. Then $\text{LM}(\kappa(x_1))<\text{LM}(\kappa(x))$. Note that as long as there are non-zero powers of $u^{-1}_2,\ldots, u_{p-1}^{-1}$  or $d_l^{(i)}$, we can apply the same argument to find $f_r\in K_0$ for $r\geq 1$ such that if $x_r=x_{r-1}-f_{r-1}$, then $\text{LM}(\kappa(f_r))=\text{LM}(\kappa(x_r))$. Since $\text{LM}(\kappa(x_r-f_r))< \text{LM}(\kappa(x_r))$ at each step $r$, there must be some $R$ such that 
\[\kappa(x_R)=c_Ru^{-n_R}+\text{LOT}\]
with $c_R\in MU_\ast$, $n_R\in \N$, and $\text{LOT}(\kappa(x_R))\in MU_\ast[u^{-1}]$. By \cite{kosniowski1976generators}*{Lemma 3.4}, $c_R=pc'_R$ is divisible by $p$ in $MU_\ast$. Setting $f_{R}=-c'_Rq_{n_R+1}\in K_0$, we get $\text{LM}(\kappa(f_{R}))=\text{LM}(\kappa(x_R))$. Since $\text{LOT}(\kappa(q_{j}))\in MU_\ast[u^{-1}]$, we can go on to define $x_{R+1}=x_R-f_R$ without adding new monomials containing $u^{-1}_2,\ldots,u_{p-1}^{-1}$,or $ d_l^{(i)}$, and the process can be continued. Thus, there is a $T\in \N$ such that $f=f_0+\cdots+f_R+f_{R+1}+\cdots+f_{R+T}\in K_0$ satisfies
\[\kappa(x-f)=0.\]
Since $\ker(\kappa)=(q_1)\subset MU^{C_p}_\ast$, there must be an $A\in MU^{C_p}_\ast$ with $Aq_1=x-f$. By Remark \ref{remark:kernel}, $Aq_1=aq_1$ for some $a\in MU_\ast$, so $x=f+aq_1\in K_0$.
\end{proof}

\begin{proof}[Proof of Theorem \ref{thm:generators-geometric-cobordism}]
    Let 
    \[R=MU_\ast[q_k,d_{l,j}^{(i)}\mid k\geq 1, (i,l)\neq (1,0)]/I\]
    where $I$ is the ideal generated by the relations \eqref{eq:19}, \eqref{eq:20}, \eqref{eq:21}, \eqref{eq:24}, and \eqref{25}. From the relations \eqref{eq:relations-d} and \eqref{eq:relations-q} in $MU^{C_p}_\ast$, we get
    \begin{align*}
         \tag{27}\label{last}ud_{l,j+1}^{(i)}d_{s,k+1}^{(i')} & =d^{(i)}_{l,j+1}(d^{(i')}_{k,s}-t^{(i')}_{k,s}) =d^{(i')}_{k,s+1}(d^{(i)}_{l,j}-t^{(i)}_{l,j}) \\
        ud_{l,j+1}^{(i)}q_{k+1}&= d^{(i)}_{l,j+1}(q_k-c_k) =q_{k+1}(d^{(i)}_{l,j}-t^{(i)}_{l,j})\\
        uq_{j+1}q_{k+1} & =q_{j+1}(q_k-c_k) =q_{k+1}(q_j-c_j).    
    \end{align*}
Hence, $\Omega^{C_p}_\ast$ satisfies the relations in $I$. We get a well-defined surjective $MU_\ast$-algebra map
\begin{align*}
    \Lambda :R & \to \Omega^{C_p}_\ast \\
    q_k & \mapsto q_k \\
    d_{l,j}^{(i)} & \mapsto d_{l,j}^{(i)}
\end{align*}
Consider the following commutative diagram
\[\begin{tikzcd}
0 \arrow[r] & \ker(\kappa|_{\Omega^{C_p}_\ast}\circ \Lambda) \arrow[d] \arrow[r] & R \arrow[d, "\Lambda"] \arrow[r, "\kappa|_{\Omega^{C_p}_\ast}\circ \Lambda"] & {MU_\ast[u_i^{\pm 1}, d_l^{(i)}]} \arrow[d, "id"] \\
0 \arrow[r] & \ker(\kappa|_{\Omega^{C_p}_\ast}) \arrow[r]                     & \Omega^{C_p}_\ast \arrow[r, "\kappa|_{\Omega^{C_p}_\ast}"]             & {MU_\ast[u_i^{\pm 1}, d_l^{(i)}]}                
\end{tikzcd}\]
where $\kappa|_{\Omega^{C_p}_\ast}$ is the restriction of the map $\kappa$ in Lemma \ref{lema:localization-R}. We will show that
\[\ker(\kappa|_{\Omega^{C_p}_\ast}\circ \Lambda)= \ker(\kappa|_{\Omega^{C_p}_\ast})\tag{28}\label{eq:kernel}=MU_\ast\{q_1\}.\]
By Remark \ref{remark:kernel}, $\ker(\kappa|_{\Omega^{C_p}_\ast})=\ker(\kappa)$ is the $MU_\ast$-module generated by $q_1$. If \eqref{eq:kernel} holds, by the Four Lemma, the surjective map $\Lambda$ is also injective, and $R\cong \Omega^{C_p}_\ast$ as $MU_\ast$-algebras. 

To show \eqref{eq:kernel}, we start by ordering the generators of $R$ as follows
\begin{align*}
    &q_1<q_2<q_3<\cdots \\
     < & d_{0,0}^{(2)}<d_{0,1}^{(2)}<d_{0,2}^{(2)}<\cdots <d_{1,0}^{(2)}<d_{1,1}^{(2)}<\cdots\\
     < & d_{0,0}^{(3)}<d_{0,1}^{(3)}<\cdots \\
     < & \cdots \\
     < & d_{0,0}^{(p-1)} < d_{0,1}^{(p-1)}<\cdots \\
     < & d_{1,0}^{(1)}<d_{1,1}^{(1)}<\cdots 
\end{align*}
Given this ordering, we can rename the generators $q_k$ and $d_{l,j}^{(i)}$ as $x_k$ for $k\leq \omega^2$ where $\omega$ is the first infinite ordinal. We have
\begin{align*}
    x_k & =q_{k+1} \text{ for $k\in \N$} \\
    x_{(l(p-1)+i-1)\omega + j} & =d_{l,j}^{(i)}.
\end{align*}
In the same way, define 
\begin{align*}
    p_k & =c_{k+1} \in MU_\ast \\
    p_{(l(p-1)+i-1)\omega + j} & =t_{l,j}^{(i)}\in MU_\ast.
\end{align*}
The relations \eqref{last} can be rewritten as
\begin{align*}
    x_sx_t=x_{s-1}x_{t+1}+p_tx_s-p_{s-1}x_{t+1}\tag{29}\label{eq:x-relations}
\end{align*}
for all $s,t<\omega^2$ and $s\neq n\omega $. In particular, for $q_1$ we have
\[q_1x_t=p_tq_1\tag{30}\label{eq:new-q}\]
for all $t<\omega^2$. Consider a monomial 
\[A= x_{k_1}x_{k_2}\cdots x_{k_N}\in R\]
where $k_1\leq k_2\leq \cdots \leq k_N$. We say that $A$ has length $N$. Suppose that $N\geq 2$. If for some $1\leq s\leq N-1$, $k_s=n\omega+j$ for some $j\geq 1$, we can use \eqref{eq:x-relations} to rewrite
\[A=x_{k_1}\cdots x_{k_{s}-1}\cdots x_{k_{N}+1}+ \text{LLT}\]
where LLT means lower length terms. Repeating this process, we can write
\[A=q_1^md_{l_{m+1},0}^{(i_{m+1})}\cdots d_{l_{N-1},0}^{(i_{N-1})}d_{l_N,j}^{(i_N)} +\text{LLT}.\]
In particular, we have 
\[(l_{m+1},i_{m+1})\leq \cdots \leq(l_N,i_N)\]
in the lexicographic ordering of pairs. If $m\geq 1$, we use \eqref{eq:new-q} to get
\[A=cq_1+\text{LLT}\]
for some $c\in MU_\ast$. 

For $I=\{(l_1,i_1)\leq \ldots\ \leq (l_{N},i_{N})\}$ and $j\geq 0$, denote 
\[d_{I,j}=d_{l_1,0}^{(i_1)}\cdots d_{l_{N-1},0}^{(i_{N-1})}d_{l_N,j}^{(i_N)}\]
Repeating the process for the lower length terms of $A$, we can write
\[A=\sum c_sd_{I_s,j_s}+\sum r_tq_{k_t}.\]
with $c_s,r_t\in MU_\ast$. In other words, the monomials
$d_{I,j}$ , $q_k$ generate $R$ as an $MU_\ast$-module. We give a new ordering to these elements by
\begin{align*}
    q_k & <q_{k'} \text{ if $k<k'$} \\
    q_k & < d_{I,j} \text{ for all $k, I, j$}\\
    d_{I,j} & < d_{I',j'} \text{ if $I<I'$}\\
    d_{I,j} & < d_{I,j'} \text{ if $j<j'$}
\end{align*}
We have 
\begin{align*}
    (\kappa\circ\Lambda)(d_{I,j}) & = u^{-j}d_{l_1}^{(i_1)}\cdots d_{l_n}^{(i_n)}+\text{LOT} \\
    (\kappa\circ\Lambda)(q_k) & = -pu^{-k+1}+\text{LOT}
\end{align*}
where the monomial order in $MU_\ast[u_i^{- 1},d_l^{(i)}]$ is the same as in Lemma \ref{lema:generated}.
In particular, if $d_{I,j}<d_{I',j'}$, then $\text{LM} ((\kappa\circ\phi)(d_{I,j}))< \text{LM}( (\kappa\circ\phi)(d_{I',j'}))$, where LM means leading monomial. Similarly, if $q_k<q_{k'}$ then $\text{LM}((\kappa\circ\phi)(q_k)< \text{LM}( (\kappa\circ\phi)(q_{k'})$. Therefore, if $\text{LM}(f)\neq q_1$ it must be $(\kappa\circ\phi)((f))\neq 0$. This shows that $\ker(\kappa\circ \phi)$ is the $MU_\ast$-module generated by $q_1$. 
\end{proof}

In \cite{kosniowski1976generators}*{Theorem 4.1}, Kosniowski gave a basis for $\Omega^{C_p}_\ast$ as a free $MU_\ast$-module. The proof of Theorem \ref{thm:generators-geometric-cobordism} gives a new basis.

\begin{cor}\label{corolario}
    As a free $MU_\ast$-module, $\Omega^{C_p}_\ast=MU_\ast\{q_k,d_{I,j}\}$ where $k\geq 1, j\geq 0$, and $I$ ranges over all sequences of pairs $\{(l_1,i_1)\leq \cdots\leq (l_N,i_N)\}$ with $N\geq 1$, $l_k\geq 0$, $1\leq i_k\leq p-1$, and $(l_k,i_k)\neq (0,1)$.
\end{cor}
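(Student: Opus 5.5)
We extract the basis from the proof of Theorem \ref{thm:generators-geometric-cobordism}, which gives two facts: the monomials $q_k$ ($k\geq 1$) and $d_{I,j}$ span $\Omega^{C_p}_\ast$ over $MU_\ast$, and their images under $\kappa$ have distinct leading monomials. The corollary should follow by combining spanning with linear independence. Two preliminary points need care. First, the index set must be clarified. Since $d^{(1)}_{0,j}=0$ by \eqref{eq:23}, and $d_0^{(1)}=u^{-1}$ is not one of the listed variables, pairs $(l_k,i_k)=(0,1)$ are excluded. Second, $q_1$ must be handled separately, because it lies in $\ker\kappa$.

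\textbf{Spanning.} The rewriting argument in the proof of Theorem \ref{thm:generators-geometric-cobordism} applies to an arbitrary monomial $x_{k_1}\cdots x_{k_N}$. It repeatedly applies \eqref{eq:x-relations} to move all $u$-shifts onto the largest factor, and applies \eqref{eq:new-q} to absorb any power of $q_1$ into an $MU_\ast$-multiple of $q_1$. Induction on length, and within fixed length on the position of the shifted index, then shows that every element of $\Omega^{C_p}_\ast\cong R$ is an $MU_\ast$-combination of the $q_k$ and the $d_{I,j}$. The induction terminates because each rewrite either lowers the length or moves a shift strictly rightward while keeping the multiset of pairs $(l,i)$ sorted. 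I will state this explicitly rather than rely on ``repeating the process''.

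\textbf{Independence.} Suppose $\sum c_s d_{I_s,j_s}+\sum_{k\geq1} r_k q_k=0$ in $\Omega^{C_p}_\ast\subset MU^{C_p}_\ast$, with finitely many nonzero coefficients in $MU_\ast$. Apply $\kappa$ from Lemma \ref{lema:localization-R}. The leading term of $\kappa(d_{I,j})$ is $u^{-j}d_{l_1}^{(i_1)}\cdots d_{l_N}^{(i_N)}$, with $d_0^{(i)}=u_i^{-1}$ for $i\neq 1$. The leading term of $\kappa(q_k)$ is $-p\,u^{-k+1}$. These monomials are pairwise distinct for distinct basis elements with $k\geq 2$. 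Here we use that $MU_\ast[u_i^{\pm1},d_l^{(i)}]$ is a polynomial (Laurent) ring, so $u^{-j}\prod d_{l_r}^{(i_r)}$ determines $j$ and the multiset $I$. We also use that $p$ is not a zero divisor in $MU_\ast$. Taking the largest leading monomial occurring among terms with nonzero coefficient, its coefficient must vanish, and downward induction kills every $c_s$ and every $r_k$ with $k\geq 2$.

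What remains is $r_1q_1=0$. This is the step I expect to be the main obstacle, since $\kappa(q_1)=0$ and $\kappa$ gives no information. I would instead use the completion map $\rho$, under which $\rho(q_1)=[p]u/u=p+c_2u+\cdots$. In $MU_\ast[[u]]/[p]u$ this element is not annihilated by any nonzero $a\in MU_\ast$. Indeed, $a([p]u/u)=g\cdot[p]u$ would give $a=gu$ after cancelling $[p]u/u$, which is a non-zero-divisor in $MU_\ast[[u]]$ because its constant term is $p$; comparing constant terms then forces $a=0$. Hence $r_1=0$. Combined with the injectivity of $j_\ast$ and Theorem \ref{teo:PT-iso}, this shows that the set $\{q_k,d_{I,j}\}$ is an $MU_\ast$-basis of $\Omega^{C_p}_\ast$.
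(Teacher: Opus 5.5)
\paragraph{Comparison with the paper.} Your spanning step and your leading-monomial argument under $\kappa$ match the paper. The real difference is how you show that $q_1$ is $MU_\ast$-torsion-free.

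The paper argues geometrically. Because $\kappa(q_1)=0$, $q_1$ is represented by a free $0$-dimensional $C_p$-manifold. The relation $q_1^2=pq_1$ is then used to identify it with a single free orbit $C_p$, and $\res(wq_1)=wp$ forces $w=0$.

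You instead push $q_1$ through $\rho$ into $MU_\ast[[u]]/[p]u$, where $\rho(q_1)=[p]u/u=p+c_2u+\cdots$, and compare constant terms. This is correct, since $MU_\ast[[u]]$ is a domain.

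The two arguments are the same idea. Since $[p]u\in(u)$, composing $\rho$ with $u\mapsto 0$ gives an $MU_\ast$-algebra map $MU^{C_p}_\ast\to MU_\ast$, namely the restriction. It sends $q_1\mapsto c_1=p$, which settles the torsion question at once. Your version is purely algebraic. It therefore avoids identifying $\ker\kappa$ with bordism of free manifolds and identifying $q_1$ with $[C_p]$, a step the paper treats somewhat informally.

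\paragraph{A correction to the spanning claim.} This applies both to your proposal and to the statement as printed: the listed elements do not span, because the unit $1$ is missing. Your rewriting handles monomials of length $N\geq 1$, and the relations \eqref{eq:x-relations} and \eqref{eq:new-q} have no constant terms, so the empty monomial is never accounted for.

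Concretely, every listed element except $q_1$ has degree at least $2$, and $MU_\ast$ is concentrated in nonnegative degrees. So the span in degree $0$ is only $\mathbb{Z}q_1$. But $1\in\Omega^{C_p}_0$ and $1\notin\mathbb{Z}q_1$, since $\kappa(1)=1$ while $\kappa(q_1)=0$.

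The basis should be $\{1,q_k,d_{I,j}\}$, or equivalently you should allow $I=\emptyset$ with $j=0$. This gives $\Omega^{C_p}_0=\mathbb{Z}\{1,q_1\}$, spanned by a fixed point and a free orbit, as expected geometrically.

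With this amendment your independence argument goes through unchanged. The monomial $\kappa(1)=1$ is a leading monomial distinct from every $u^{-j}d_{l_1}^{(i_1)}\cdots d_{l_N}^{(i_N)}$ and every $u^{-(k-1)}$ with $k\geq 2$. Its coefficient is therefore eliminated by the same leading-monomial induction, leaving only $r_1q_1=0$, which your $\rho$ argument handles.
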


\begin{proof}
    The proof of Theorem \ref{thm:generators-geometric-cobordism} shows that $\Omega^{C_p}_\ast$ is generated as an $MU_\ast$-module by these elements. Moreover, if
    \[\sum_{s=1}^n c_sd_{I_s,j_s}+\sum_{t=1}^m r_tq_{k_t} + wq_1=0\]
    with $c_s,r_t,w\in MU_\ast$, and $k_t\neq 1$; the same proof shows that $c_s=r_t=0$ for all $1\leq s\leq n$, $1\leq t\leq m$. Hence, it is enough to show that $q_1$ has no $MU_\ast$-torsion. Suppose $wq_1=0$ for some $w\in MU_\ast$. Since $q_1\in \ker(\kappa)$, it represents a $C_p$-manifold with free action. Moreover, since $q_1$ has degree $0$ and satisfies $q_1^2=pq_1$, it must contain $p$ points. So it is a single free orbit, that is, $q_1=C_p$. Consider the restriction map
\begin{align*}
    \res:\Omega^{C_p}_\ast & \to MU_\ast 
\end{align*}
which is given by forgetting the action on a manifold. Therefore, $\res(wq_1)=w\res(q_1)=wp=0$. Since $MU_\ast$ is a domain, we must have $w=0$. 
\end{proof}

\vspace{0.5cm}

\section{Geometric Generators for \texorpdfstring{$\Omega^{C_p}_\ast$}{Omega Cp *}}\label{section5}

In \cite{kosniowski1976generators}, Kosniowski gave a set of geometric $MU_\ast$-algebra generators for $\Omega^{C_p}_\ast$, though without relations among them. In this section, we relate Kosniowski's generators with those of Theorem \ref{thm:generators-geometric-cobordism}. 

If $V$ is a complex $C_p$-representation, then $\C P(V)=(V-\{0\})/\C^\times$ is the space of all $1$-dimensional complex subspaces in $V$. We will write 
\[\C P(n_0,n_1,\ldots , n_{p-1})=\C P(\C_0^{n_0}\oplus \C_1^{n_1}\oplus\cdots\oplus \C_{p-1}^{n_{p-1}}).\]
We will omit the index $n_k$ if $n_k=0$. Here, $\C_k$ is the $1$-dimensional complex $C_p$-representation given by the character $\zeta^k=e^{\frac{2\pi ik}{p}}$.  In particular, $\C P(n_0)=\C P^{n_0-1}$.

 We recall the following construction of \cite{kosniowski1976generators}. Let $M$ be a stably almost complex $C_p$-manifold such that the $C_p$-action extends to an $S^1$-action. Then $S^1$ acts freely on $M\times S^3$ by $\gamma\cdot (m,z_1,z_2)=(\gamma \cdot m,\gamma z_1,\gamma z_2)$. Here, $S^1\subset \C$ and $S^3\subset \C^2$ are the unit spheres, and $(z_1,z_2)\in S^3$. In \cite{kosniowski1976generators}, Kosniowski defined 
 \[\Gamma(M)=(M\times S^3)/S^1.\] 
 This is a stably almost complex $C_p$-manifold with action given by $\gamma\cdot [m,z_1,z_2]=[\gamma\cdot m,\gamma z_1,z_2]$. This construction can be iterated, with $\Gamma^n(M)=\Gamma(\Gamma^{n-1} M)$, and $\Gamma^0(M)=M$.

 The following is the main theorem of \cite{kosniowski1976generators}.

\begin{teo}\cite{kosniowski1976generators}*{Theorem 2.8}\label{teo-geo-generators}
    $\Omega^{C_p,\text{geo}}_\ast$ is generated as an $MU_\ast$-algebra by the bordism classes of the following manifolds:
    \begin{enumerate}[(i)]
        \item $C_p$
        \item $\Gamma^m(pt)$ with $m\geq 0.$
        \item $\Gamma^m(\C P(1_0,1_i))$ with $m\geq 0$ and $\frac{p+1}{2}\leq i<p-1$.
        \item $\Gamma^m(\C P(1_0,1_1,1_i))$ with $m\geq 0$ and $1< i\leq \frac{p+1}{2}$.
        \item $\Gamma^m(\C P(n_0,1_i))$ with $m\geq 0$, $1\leq  i\leq p-1$, and $n\geq 2$.
        \item $S_i$ with $1\leq i\leq \frac{p-1}{2}$ where $S_i$ is the Riemann surface of genus $\frac{(p-i^{-1}-1)(p-1)}{2}$ associated to the function 
        \[u^{p-i^{-1}}-z^p=-1\]
        where $i^{-1}$ is defined as in Section \ref{section:5}.
   \end{enumerate}
\end{teo}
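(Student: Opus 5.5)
We would prove this with the machinery of Sections~\ref{section2}--\ref{section4}, not by Kosniowski's original geometric argument. By Theorem~\ref{teo:PT-iso}, $\Omega^{C_p,\text{geo}}_\ast\cong\Omega^{C_p}_\ast$. By Theorem~\ref{thm:generators-geometric-cobordism}, this ring is generated as an $MU_\ast$-algebra by the $q_j$ and the $d_{l,j}^{(i)}$. So it suffices to show that the $MU_\ast$-subalgebra $K$ generated by the classes (i)--(vi) contains every $q_j$ and every $d_{l,j}^{(i)}$.

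\textbf{Computing the geometric fixed points.} Since $\kappa|_{\Omega^{C_p}_\ast}$ has kernel $MU_\ast\{q_1\}$ (Remark~\ref{remark:kernel}), we work with the images $\kappa([M])\in MU_\ast[u_i^{-1},d_l^{(i)}]$. By Proposition~\ref{prop: commutative-diagram-geo-fixed-pts} these are computed from fixed-point data, after applying $\iota_\ast$. A fixed component $F\subset M^{C_p}$ whose normal bundle splits into eigenbundles $\nu_i$ (on which the generator acts by $\zeta^i$) contributes the class of $F$ together with the inverses of the $\nu_i$, expressed in the $u_i^{-1}$, $d_l^{(i)}$. Concretely:
\begin{itemize}
\item An isolated fixed point with tangent representation $\bigoplus_k\C_{i_k}$ contributes $\prod_k u_{i_k}^{-1}$.
\item A fixed $\C P^{n-1}$ whose normal bundle is a tautological-type line bundle tensored with $\C_i$ contributes $u_i^{-1}$ times a class whose leading term is $d_{n-1}^{(i)}$.
\item $\Gamma$ adds one $\C_1$-direction at every fixed point, so the $u^{-1}$-degree rises by one.
\end{itemize}

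From this we expect the following leading monomials, in the order of Lemma~\ref{lema:generated}:
\begin{itemize}
\item $\Gamma^m(pt)=\C P(1_0,m_1)$ should give leading term $\pm u^{-m}$, up to multiples of $p$.
\item $\Gamma^m(\C P(n_0,1_i))$ should give $\pm u^{-m}d_{n-1}^{(i)}+\text{LOT}$.
\item The manifolds in (iii), (iv) and (vi) should give leading terms $\pm u^{-m}u_i^{-1}+\text{LOT}$ for the various $i\neq1$.
\item $C_p$ is $q_1$, as in the proof of Corollary~\ref{corolario}.
\end{itemize}
These match the leading monomials of $\kappa(d_{n-1,m}^{(i)})$, $\kappa(d_{0,m}^{(i)})$ and $\kappa(q_j)$ recorded in the proof of Lemma~\ref{lema:generated}.

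\textbf{The reduction.} With unit leading coefficients in hand, we run the reduction of Lemma~\ref{lema:generated} with $K$ in place of $K_0$. Given a target generator $x$, we subtract products of the listed classes with the same leading monomial. Whenever the remainder has all its monomials in $MU_\ast[u^{-1}]$, Kosniowski's Lemma 3.4 makes the top coefficient divisible by $p$. The powers $u^{-m}$ with coefficient a multiple of $p$ are then produced from suitable combinations of $\Gamma^m(pt)$ and $C_p$ (note that $\kappa(q_j)=-pu^{-j+1}+\text{LOT}$). The process ends with $\kappa(x-f)=0$ for some $f\in K$. Remark~\ref{remark:kernel} then gives $x-f\in MU_\ast\{q_1\}\subset K$, so $x\in K$.

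\textbf{The main obstacle.} Everything hinges on the fixed-point computation producing leading coefficients that are \emph{units} in $MU_\ast$, not merely nonzero. The delicate case is the degree-one generators $d_{0,j}^{(i)}$, whose leading monomial is $u^{-j}u_i^{-1}$. A naive candidate such as $\C P(1_0,1_i)$ has fixed points with tangent representations $\C_i$ and $\C_{-i}$. It therefore produces $u_i^{-1}+u_{p-i}^{-1}$, which pairs $i$ with $p-i$ instead of isolating each $i$. To separate the indices, we would first compute explicitly $\kappa$ of $\C P(1_0,1_1,1_i)$ and of the Riemann surfaces $S_i$, using the fixed points of $z\mapsto\zeta z$ on $u^{p-i^{-1}}-z^p=-1$ and their rotation numbers. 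We would then check that, together with the items in (iii), the resulting $(p-1)\times(p-1)$ matrix of coefficients of the $u_i^{-1}$, reduced modulo lower terms and modulo the relations $\eta_i\equiv i$ of Lemma~\ref{lema:congruent}, is invertible over $MU_\ast$ rather than only over $\mathbb{Q}$. If it is invertible only after inverting $p$, the $p$-divisible discrepancy should instead be absorbed using $q_1=C_p$ and the relation $q_1d_{0,0}^{(i)}=t_{0,0}^{(i)}q_1$.
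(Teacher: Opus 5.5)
\textbf{There is a genuine gap.} The paper does not prove this theorem; it quotes it from Kosniowski. Lemma \ref{lema:generated}, and hence Theorem \ref{thm:generators-geometric-cobordism}, already uses Kosniowski's Lemma 3.4. Your framework is reasonable as far as it goes: reduce to showing that the subalgebra $K$ contains all $q_j,d^{(i)}_{l,j}$, using $\ker\kappa=MU_\ast\{q_1\}\subset K$. But the reduction breaks exactly where the theorem has content, namely in producing the $q_{m+1}$, i.e.\ the pure terms $p\,u^{-m}$.

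You attribute these to $\Gamma^m(pt)$ and $C_p$, and both attributions fail. By Lemma \ref{lema:kos}(vi),
\[\kappa(\Gamma^m(pt))=u^{-m}+u_{p-1}^{-1}\sum_{k=0}^{m-1}\res(\Gamma^k(pt))\,u^{-(m-1-k)}.\]
So its leading monomial is $u^{-(m-1)}u_{p-1}^{-1}$, and its $u^{-m}$-coefficient is $1$, not a multiple of $p$. By Proposition \ref{prop:gamma}, $\Gamma^m(pt)\equiv d^{(p-1)}_{0,m-1}-q_{m+1}$ modulo $q_1$ and $MU_{>0}$-multiples of lower-degree classes. Also, $\Gamma^m(pt)\neq\mathbb{C}P(1_0,m_1)$ for $m\geq2$: the latter has a fixed $\mathbb{C}P^{m-1}$, and its $\kappa$ involves $d^{(p-1)}_{m-1}$ by Lemma \ref{lema:kos}(iv).

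Moreover, $\kappa(C_p)=\kappa(q_1)=0$, so $C_p$ cannot change any coefficient of $\kappa$. For the same reason your fallback is empty: absorbing discrepancies via $q_1d^{(i)}_{0,0}=t^{(i)}_{0,0}q_1$ does nothing, since every multiple of $q_1$ lies in $\ker\kappa$. Nothing in your sketch separates $q_{m+1}$ from $d^{(p-1)}_{0,m-1}$.

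What actually has to be shown is a mod $p$ rank condition, not a statement about unit leading coefficients. Relations \eqref{eq:20} and \eqref{eq:21} with $k=1$, together with $q_1x=\res(x)q_1$, give
\[p\,q_{n+1}=c_{n+1}q_1-q_2q_n+c_nq_2,\qquad p\,d^{(i)}_{0,n-1}=t^{(i)}_{0,n-1}q_1-q_2d^{(i)}_{0,n-2}+t^{(i)}_{0,n-2}q_2.\]
So for $n\geq2$ these classes are $p$-torsion modulo decomposables; in degree $4$, $q_3$ and the $d^{(i)}_{0,1}$ span a copy of $(\mathbb{Z}/p)^{p-1}$. The $\mathbb{C}P(n_0,1_i)$ are used up on the free classes $d^{(i)}_{n-1,0}$. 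Generation therefore requires that the classes $\Gamma^{n}(pt)$, $\Gamma^{n-1}(\mathbb{C}P(1_0,1_i))$ and $\Gamma^{n-2}(\mathbb{C}P(1_0,1_1,1_i))$ span this space mod $p$.

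The $q$-direction enters through leading coefficients divisible by $p$, which also refutes your expectation that the classes in (iv) have unit leading coefficients. For $p=3$, $\kappa(\mathbb{C}P(1_0,1_1,1_2))=3u^{-1}u_2^{-1}$. Then $\mathbb{C}P(1_0,1_1,1_2)-3\Gamma^2(pt)+3\res(\Gamma(pt))\Gamma(pt)$ has $\kappa=-3u^{-2}+3\res(\Gamma(pt))u^{-1}$, and this is what produces $q_3$.

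For general $p$ and all degrees this is a nontrivial determinant computation. The elimination must also handle the terms $\res(\,\cdot\,)u_{p-1}^{-1}$ that $\Gamma$ introduces, which are lex-larger than $u^{-m}u_i^{-1}$ for $i<p-1$. Your proposal neither sets up this matrix correctly (your $(p-1)\times(p-1)$ matrix of $u_i^{-1}$-coefficients omits the $q$-direction) nor evaluates it. Without that computation, which is the substance of Kosniowski's theorem, the argument does not prove the statement.
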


\begin{remark}
    Note that $\Gamma(pt)=\C P(1_0,1_{p-1})$. Also, $S_i$ is the projective closure of the algebraic curve given by $u^{p-i^{-1}}-z^p=-1$. The action of $C_p$ on $S_i$ is induced by $\zeta\cdot[z,u]=[\zeta z,u]$ where we see the primitive $p$-root of unity $\zeta=e^{\frac{2\pi i}{p}}$ as a generator of $C_p$. 
\end{remark}

We will relate our generators to these manifolds by comparing their image under the geometric fixed points map. Under the isomorphism in Lemma \ref{lema:identify-geo-fixed-points}, the map $\Phi^{\text{geo}}:\Omega^{C_p,\text{geo}}_\ast\to \Phi^{C_p}\Omega^{C_p,\text{geo}}_\ast$ is given by
\begin{align*}
  \Phi^{\text{geo}}:  \Omega^{C_p,\text{geo}}_\ast & \to \bigoplus_{m_1,\ldots,m_{n}\in \N_{\geq 0}}MU_{\ast-2(m_1+\ldots+m_{n})}( BU(m_1)\times\cdots\times BU(m_n)) \\
   [M] & \mapsto \bigoplus_k [M^{C_p}_k\to BU(m_1)\times\cdots\times BU(m_n)]
\end{align*}
where $M^{C_p}_k$ is a connected component of $M^{C_p}$, and $M^{C_p}_k\to BU(m_1)\times\cdots\times BU(m_n)$ classifies the normal bundle of the inclusion $M^{C_p}_k\hookrightarrow M$. Here, the $i$-th component map $M^{C_p}_k\to BU(m_i)$ classifies the $\C_i$-isotypical component of this normal bundle. 

In the presentation $\Phi^{C_p}\Omega^{C_p}_\ast=MU_\ast[u^{-1}_i,u^{-1}_ib_l^{(i)\prime}]$, the normal bundle of the inclusion 
\[\C P(n_0)\to \C P(n_0,1_i)\]
can be identified with $u^{-1}_ib_{n_0-1}^{(i)\prime}$ (see \cite{hanke2005geometric} or \cite{conner1967bordism}*{Lemma 2.2}). However, in the presentation $\Phi^{C_p}\Omega^{C_p}_\ast=MU_\ast[u_i^{-1},d_l^{(i)}]$ the map $\kappa: \Omega^{C_p}\to MU_\ast[u_i^{-1},d_l^{(i)}]$ is $\iota\circ \Phi^{\Omega^{C_p}}$ (see \eqref{eq:09}). Hence, in this presentation, the normal bundle of the inclusion
\[\C P(n_0)\to \C P(n_0,1_i)\]
can be identified with the generator $d_{n_0-1}^{(i)}=\iota(u^{-1}_ib_{n_0-1}^{(i)\prime})$. Using the top diagram in \eqref{diagram:big-square}, we can identify the map $\Phi^{\text{geo}}:\Omega^{C_p,\text{geo}}_\ast\to MU_\ast[u_i^{-1},d_l^{(i)}]$ with the map $\kappa$.

The following lemma is the main step used in \cite{kosniowski1976generators}.

\begin{lema}\cite{kosniowski1976generators}*{Lemma 3.1} \label{lema:kos}
\hfill
    \begin{enumerate}[(i)]
        \item $\Phi^{\text{geo}} (C_p)=0$.
        \item $\Phi^{\text{geo}}(\C P(1_0,1_i))=u_{p-i}^{-1}+u_i^{-1}$.
    \item $\Phi^{\text{geo}}(\C P(1_0,1_1,1_i))=u^{-1}u_i^{-1}+u_{p-1}^{-1}u^{-1}_{i-1}+u^{-1}_{p-i}u^{-1}_{p-i+1}$.
    \item $\Phi^{\text{geo}}(\C P(n_0,1_i))=d_{n-1}^{(i)}+u^{-n}_{p-i}$
    \item $\Phi^{\text{geo}}(S_i)=u_i^{-1}+(p-i^{-1})u^{-1}$.
    \item $\Gamma(M)=u^{-1}\Phi^{\text{geo}}(M)+ \res(M) u^{-1}_{p-1}$ where $\res:\Omega^{C_p}_\ast\to MU_\ast$ is the restriction to the trivial subgroup which forgets the $C_p$-action. 
    \end{enumerate} 
\end{lema}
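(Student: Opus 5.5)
The approach for every item is the same: determine the fixed point set $M^{C_p}$ and the $C_p$-representation on the normal bundle of each component, and read off $\Phi^{\text{geo}}$ via Lemma \ref{lema:identify-geo-fixed-points}. The result is then translated into the presentation $MU_\ast[u_i^{-1},d_l^{(i)}]$ through the identification $\Phi^{\text{geo}}=\kappa=\iota\circ\Phi^{\Omega^{C_p}}$ from diagram \eqref{diagram:big-square}.

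Three facts about this translation are used repeatedly.
\begin{itemize}
\item A fixed point whose normal representation is $\bigoplus_i\C_i^{m_i}$ contributes $\prod_i u_i^{-m_i}$. This is because $\iota$ fixes $u_i^{-1}=d_0^{(i)}$.
\item Since $MU_\ast(BU)$ is a commutative Hopf algebra, its antipode $\iota_\ast$ is an algebra map. Hence contributions of products and of direct sums of normal bundles multiply.
\item A component $\C P^{n_0-1}$ whose normal bundle is the tautological $\C_i$-twisted line bundle contributes $d_{n_0-1}^{(i)}$. This is the identification recalled just before the statement.
\end{itemize}

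Items (i)–(iv) are then direct fixed point computations.
\begin{itemize}
\item[(i)] $C_p$ acts freely on itself, so it has no fixed points and $\Phi^{\text{geo}}(C_p)=0$.
\item[(ii)] In $\C P(1_0,1_i)$ the fixed points are $[1:0]$ and $[0:1]$. The tangent representations are $\mathrm{Hom}(\C_0,\C_i)=\C_i$ and $\mathrm{Hom}(\C_i,\C_0)=\C_{p-i}$, which gives $u_i^{-1}+u_{p-i}^{-1}$.
\item[(iii)] In $\C P(1_0,1_1,1_i)$ the three coordinate points have tangent representations $\C_1\oplus\C_i$, $\C_{p-1}\oplus\C_{i-1}$ and $\C_{p-i}\oplus\C_{p-i+1}$. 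Together these give the stated sum.
\item[(iv)] In $\C P(n_0,1_i)$ the fixed set has two parts. One is $\C P^{n_0-1}$, with normal bundle $\mathrm{Hom}(\gamma,\C_i)$, contributing $d_{n_0-1}^{(i)}$. The other is the isolated point $[0:\cdots:0:1]$, with tangent representation $\C_{p-i}^{n_0}$, contributing $u_{p-i}^{-n_0}$.
\end{itemize}

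For (vi), I would analyse the fixed points of $\gamma\cdot[m,z_1,z_2]=[\gamma m,\gamma z_1,z_2]$ on $\Gamma(M)$.
\begin{itemize}
\item If $z_1z_2\neq 0$, being fixed would force the compensating element $\lambda\in S^1$ to satisfy both $\lambda=\gamma$ and $\lambda=1$. So there are no fixed points there.
\item On $z_1=0$, normalising $z_2$ identifies the locus with $M$, and the fixed points are exactly $M^{C_p}$. The normal bundle there is $\nu(M^{C_p}\subset M)\oplus\C_1$, where $\C_1$ comes from the $z_1$-direction. By multiplicativity of $\iota$ this contributes $u^{-1}\Phi^{\text{geo}}(M)$.
\item On $z_2=0$, the locus is identified with $M$ by normalising $z_1$. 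Every point is fixed, since the element $\gamma$ itself compensates. The normal line has coordinate $z_2/z_1$, which has weight $-1$ and is a trivial bundle. So this component is all of $M$ with normal bundle $\C_{p-1}$, contributing $\res(M)\,u_{p-1}^{-1}$.
\end{itemize}
Summing the two contributions gives (vi).

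The main obstacle is (v), since the projective closure of $u^{a}-z^p=-1$, where $a=p-i^{-1}<p$, is singular at infinity. I would work with its smooth model (the normalisation) $S_i$ and argue as follows.
\begin{itemize}
\item In the affine part, the fixed points have $z=0$. They are the $a$ roots of $u^a=-1$. At each of them $z$ is a local parameter, because $\partial/\partial u\neq 0$ there, and it has weight $1$. These contribute $a\,u^{-1}=(p-i^{-1})u^{-1}$.
\item At infinity, $\gcd(a,p)=1$ implies there is a single place. I would choose a local parameter $t$ with $z\sim t^{-a}$ and $u\sim t^{-p}$. Since $u$ is invariant, $t^p$ must be invariant, so $t$ has weight $c$ with $ca\equiv -1$ and hence $c\equiv i^{-1}\cdot i\,\ldots$; the computation should give $c\equiv i \pmod p$. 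This yields the term $u_i^{-1}$.
\end{itemize}
Pinning down this weight carefully, including the sign conventions relating weight $c$ to $\C_c$ versus $\C_{p-c}$, which must be consistent with (ii), is where I expect most of the effort. I would cross-check it against the holomorphic Lefschetz formula or the genus $\frac{(a-1)(p-1)}{2}$.
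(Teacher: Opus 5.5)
The paper gives no argument for this lemma. It imports it from Kosniowski and transcribes it into the $d^{(i)}_l$-notation, so your direct fixed-point computation is the natural way to supply a proof. Items (i), (ii), (iii) and (vi) are fine as you argue them. They involve only isolated fixed points or trivial normal line bundles, where your first two facts ($\iota_\ast$ fixes $u_i^{-1}$ and is multiplicative) are all that is needed. Your analysis of the strata $z_1=0$, $z_2=0$ and $z_1z_2\neq 0$ of $\Gamma(M)$, with normal weights $+1$ and $-1$, is also correct.

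\textbf{The gap: your third fact, on which (iv) rests.} Set $N=\mathrm{Hom}(\gamma,\mathbb{C}_i)$. For $n_0\geq 2$ the class $[\mathbb{C}P^{n_0-1}\xrightarrow{N}BU(1)]\in MU_\ast(BU(1))$ is not $b'_{n_0-1}$.
\begin{itemize}
\item Its augmentation, induced by $BU(1)\to pt$, is $[\mathbb{C}P^{n_0-1}]\neq 0$.
\item By contrast, $\epsilon(b'_l)=0$ for $l>0$ by the Hopf-algebra description in Section \ref{section2}.
\end{itemize}
Use the basis dual to the powers of $x$, which is the one for which $\phi(d^{(i)}_l)=\mathrm{Coeff}_{x^l}(x+_F[i]u)^{-1}$. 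In it, $[\mathbb{C}P^{n_0-1},N]=\sum_l\lambda_l b'_l$ with
\[
\lambda_l=\int_{\mathbb{C}P^{n_0-1}}c_1^{MU}(N)^l=[\mathbb{C}P^{n_0-1-l}].
\]
So this component contributes $\sum_{l=0}^{n_0-1}[\mathbb{C}P^{n_0-1-l}]\,d^{(i)}_l$. Equivalently, by localization, its image under $\phi$ is $\int_{\mathbb{C}P^{n_0-1}}(x+_F[i]u)^{-1}$.

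The paper makes the same identification just before the lemma, but it cannot be right. Take $n_0=2$, and put $e=i^{-1}$ and $e'=(p-i)^{-1}=p-e$. Then
\[
d^{(i)}_1+u_{p-i}^{-2}=\kappa\bigl(d^{(i)}_{1,0}+(d^{(p-i)}_{0,0})^2+2e'd^{(p-i)}_{0,1}-(p-2e)q_3\bigr)+Ru^{-1},
\]
where
\[
R=t^{(i)}_{1,-1}+2e't^{(p-i)}_{0,0}-(p-2e)c_2\equiv e\,a_{11}=-i^{-1}[\mathbb{C}P^1]\pmod p,
\]
and $a_{11}=-[\mathbb{C}P^1]$ is the $xy$-coefficient of $F$ for the standard orientation. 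Membership $Ru^{-1}\in\kappa(\Omega^{C_p}_\ast)$ forces $p\mid R$, since it would have to equal $-\tfrac{R}{p}\kappa(q_2)$ (cf. the proof of Lemma \ref{lema:generated}). So the right-hand side of (iv) is not $\Phi^{\text{geo}}$ of anything. The missing term $[\mathbb{C}P^1]u_i^{-1}$ adds exactly $i^{-1}[\mathbb{C}P^1]$ to $R$ and restores divisibility.

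What your method actually proves is
\[
\Phi^{\text{geo}}(\mathbb{C}P(n_0,1_i))=\sum_{l=0}^{n_0-1}[\mathbb{C}P^{n_0-1-l}]\,d^{(i)}_l+u_{p-i}^{-n_0}.
\]
This agrees with the stated (iv) only for $n_0=1$. Your third fact has to be replaced by the Kronecker-pairing computation above.

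\textbf{A smaller point in (v).} Your conclusion $c\equiv i$ is right, but the stated reason is not. Invariance of $u$ gives no constraint, since $t^p$ is always invariant. The constraint comes from $z$:
\begin{itemize}
\item Write $z=t^{-a}h(t)$ with $h(0)\neq 0$, and $g^\ast t=\zeta^c t+O(t^2)$ for the generator $g$.
\item The identity $g^\ast z=\zeta z$ gives $\zeta^{-ca}=\zeta$, that is, $ca\equiv -1$.
\item Since $a\equiv -i^{-1}$, this yields $c\equiv i$.
\end{itemize}
This uses the same convention as at the affine fixed points and in (ii): a local coordinate of weight $c$ gives tangent representation $\mathbb{C}_c$. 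So no further sign check is needed.
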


The following lemma gives some of Kosniowski’s geometrically-defined generators in terms of our generators.

\begin{lema}\label{lemma:last}
We have the following identifications.
\begin{enumerate}[(i)]
    \item $C_p=q_1$.
    \item $S_i=d_{0,0}^{(i)}-q_2$ for $1\leq i\leq \frac{p-1}{2}$ mod $q_1$.
    \item $\C P(1_0,1_i)=d_{0,0}^{(p-i)}+d_{0,0}^{(i)}-q_2$ for $\frac{p+1}{2}\leq i\leq p-1$ mod $q_1$.
    \item For $1<i\leq \frac{p-1}{2}$, 
    \[\C P(1_0,1_1,1_i)=d_{0,0}^{(p-1)}d_{0,0}^{(i-1)}+(i-1)^{-1}d_{0,1}^{(p-1)}+d_{0,0}^{(p-i+1)}d_{0,0}^{(p-i)}+(p-i)^{-1}d_{0,1}^{(p-i+1)}+(p-i+1)^{-1}d_{0,1}^{(p-i)}\]
    \[+d_{0,1}^{(i)}+(p-1)d_{0,1}^{(i-1)}-N_{p,i}q_3-M_{p,i}q_2\]
    mod $q_1$. Here, the terms $N_{p,i}\in \N$, and $M_{p,i}\in MU_\ast$ satisfy
\begin{align*}
    pN_{p,i} & =(p-i+1)^{-1}(p-i)^{-1}+i^{-1}+(p-1)(i-1)^{-1}\tag{31}\label{30}
\end{align*}
\begin{align*}
    p M_{p,i} & = (p-i)^{-1}t_{0,0}^{(p-i+1)}+(i-1)^{-1}t_{0,0}^{(p-1)}+(p-i+1)^{-1}t_{0,0}^{(p-i)}\tag{32}\label{31}\\
    & +t_{0,0}^{(i)}+(p-1)t_{0,0}^{(i-1)}-N_{p,i}c_2.
\end{align*}
\end{enumerate}
\end{lema}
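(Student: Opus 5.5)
The plan is to compare images under the injective-modulo-$q_1$ map $\kappa$ of Lemma \ref{lema:localization-R}. By the identification made just before Lemma \ref{lema:kos}, $\Phi^{\text{geo}}$ coincides with $\kappa$ restricted to $\Omega^{C_p}_\ast$. By Remark \ref{remark:kernel}, $\ker(\kappa)=MU_\ast\{q_1\}$. Hence to prove an identity $X=Y$ modulo $q_1$ in $\Omega^{C_p}_\ast$ it suffices to check $\kappa(X)=\kappa(Y)$ in $MU_\ast[u_i^{\pm1},d_l^{(i)}]$. For each item I will compute $\kappa$ of the proposed right-hand side from the formulas
\[\kappa(d_{l,j}^{(i)})=u^{-j}d_l^{(i)}-\sum_{k=1}^{l+1+j}t_{l,j-k}^{(i)}u^{-k},\qquad \kappa(q_j)=-\sum_{k=1}^{j}c_{j-k}u^{-k},\]
and then compare with Lemma \ref{lema:kos}.

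For (i), $\kappa(C_p)=0$ by Lemma \ref{lema:kos}(i), so $C_p\in MU_\ast\{q_1\}$, say $C_p=wq_1$. Applying $\res$ gives $p=\res(C_p)=wp$, and since $MU_\ast$ is torsion-free, $w=1$. (Alternatively one can use the argument at the end of Corollary \ref{corolario}.)

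For (ii) and (iii), I use $d_0^{(i)}=u_i^{-1}$, $t_{0,-1}^{(i)}=i^{-1}$, $c_0=0$ and $c_1=p$. These give
\[\kappa(d_{0,0}^{(i)})=u_i^{-1}-i^{-1}u^{-1},\qquad \kappa(q_2)=-pu^{-1}.\]
Therefore $\kappa(d_{0,0}^{(i)}-q_2)=u_i^{-1}+(p-i^{-1})u^{-1}$, which matches Lemma \ref{lema:kos}(v). For (iii), $\kappa(d_{0,0}^{(p-i)}+d_{0,0}^{(i)}-q_2)=u_{p-i}^{-1}+u_i^{-1}+(p-(p-i)^{-1}-i^{-1})u^{-1}$. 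To conclude I need $(p-i)^{-1}+i^{-1}=p$ as integers. This holds because both are the least positive representatives in $\{1,\dots,p-1\}$ of residues summing to $0$ mod $p$, so their sum is exactly $p$. The result then agrees with Lemma \ref{lema:kos}(ii).

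For (iv) I expand $\kappa$ of every term. The products satisfy $\kappa(d_{0,0}^{(a)}d_{0,0}^{(b)})=(u_a^{-1}-a^{-1}u^{-1})(u_b^{-1}-b^{-1}u^{-1})$, and the linear terms satisfy $\kappa(d_{0,1}^{(a)})=u^{-1}u_a^{-1}-t^{(a)}_{0,0}u^{-1}-a^{-1}u^{-2}$. The coefficients $(i-1)^{-1}$, $(p-i)^{-1}$, $(p-i+1)^{-1}$, $1$ and $p-1$ are chosen so that the cross terms $u^{-1}u_a^{-1}$ coming from the products are cancelled or supplied. What should remain is exactly $u^{-1}u_i^{-1}+u_{p-1}^{-1}u_{i-1}^{-1}+u_{p-i}^{-1}u_{p-i+1}^{-1}$ from Lemma \ref{lema:kos}(iii), plus leftover pure-$u$ terms $Au^{-2}+Bu^{-1}$ with $A\in\Z$ and $B\in MU_\ast$. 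These leftovers are removed by $-N_{p,i}q_3-M_{p,i}q_2$, since $\kappa(q_3)=-pu^{-2}-c_2u^{-1}$. Requiring the $u^{-2}$ and $u^{-1}$ coefficients to vanish yields precisely \eqref{30} and \eqref{31}.

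The main obstacle is this bookkeeping in (iv). First, I must check that the mixed $u^{-1}u_a^{-1}$ terms cancel exactly; for this I will use $(p-1)^{-1}=p-1$, $1^{-1}=1$, and the fact that $a^{-1}+(p-a)^{-1}=p$. Second, I must verify that the right-hand sides of \eqref{30} and \eqref{31} really are divisible by $p$, in $\Z$ and in $MU_\ast$ respectively. For this I plan to argue indirectly. The element $X=\C P(1_0,1_1,1_i)$ minus the $d$-terms lies in $\Omega^{C_p}_\ast$ and has $\kappa$-image in $MU_\ast[u^{-1}]$. By the step in the proof of Lemma \ref{lema:generated} using \cite{kosniowski1976generators}*{Lemma 3.4}, the leading coefficient of such an element is divisible by $p$. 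Subtracting the corresponding multiple of $q_3$ and repeating forces divisibility of the next coefficient as well.
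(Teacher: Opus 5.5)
Your proposal is correct and follows the paper's route. You identify $\Phi^{\text{geo}}$ with $\kappa$, use $\ker\kappa=MU_\ast\{q_1\}$, and match images. Your expansion in (iv) checks out: the $u^{-1}u_{i-1}^{-1}$ cross terms cancel via $(p-1)^{-1}=p-1$, and the leftover $u^{-2}$ and $u^{-1}$ coefficients give exactly (31) and (32). Your appeal to Kosniowski's Lemma 3.4 for the divisibility in (32) just spells out the ``elimination process'' the paper leaves to the reader.

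The one step to tighten is in (i). There you implicitly use $\res(q_1)=p$, but the paper only derives this from $q_1=C_p$, so citing it would be circular. Justify it independently, e.g.\ from $q_1^2=pq_1$ (relation (25) with $k=1$) together with $p=\res(C_p)=w\,\res(q_1)\neq 0$, which forces $\res(q_1)=p$ and $w=1$.
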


\begin{proof}
    \begin{enumerate}[(i)]
        \item This was proved in Corollary \ref{corolario}.
        
        \item Since $\kappa(q_2)=-pu^{-1}$ and $\Phi^{\text{geo}}S_1=pu^{-1}$, we conclude $q_2=-S_1$ mod $q_1$. For $1<i\leq \frac{p-1}{2}$,  We have
\[\Phi^{\text{geo}}S_i=u^{-1}_i+(p-i^{-1})u^{-1}=u^{-1}_i-i^{-1}u^{-1}+pu^{-1}=\kappa(d_{0,0}^{(i)}-q_2).\]

\item Using that $p-i=-i$ mod $p$ and that $(p-i)(p-i)^{-1}=1$ mod $p$, we can conclude $i^{-1}+(p-i)^{-1}=p$ since $1\leq i^{-1}, (p-i)^{-1}\leq p-1$. Hence,
\begin{align*}
    \Phi^{\text{geo}}\C P(1_0,1_i) & =u^{-1}_{p-i}+u^{-1}_i=u^{-1}_{p-i}+u^{-1}_i-pu^{-1}+pu^{-1}\\
    &=u^{-1}_{p-i}-(p-i)^{-1}u^{-1}+u^{-1}_i-i^{-1}u^{-1}+pu^{-1}\\
    &= \kappa(d_{0,0}^{(p-i)}+d_{0,0}^{(i)}-q_2).
\end{align*}
\item The reader can check that both sides have the same image under the geometric fixed point map by following the elimination process described in Lemma \ref{lema:generated}, with the order $u^{-1}< u^{-1}_i< u^{-1}_{p-i}< u^{-1}_{p-i+1}< u_{p-1}^{-1}$. 
\end{enumerate}
\end{proof}

\begin{remark}
The right hand side of \eqref{30} is congruent to 
\[(i-1)^{-1}i^{-1}-(i-1)^{-1}+i^{-1}=(i-1)^{-1}(i^{-1}-1)+i^{-1}\]
modulo $p$. Since $i\neq 1$, multiplying the previous expression by $i-1$ gives
\[i^{-1}-1+i^{-1}(i-1)\]
which is congruent to $0$ modulo $p$. Therefore, we see directly that the right hand side of \eqref{30} is indeed divisible by $p$.
\end{remark}

Recall that we have a restriction map $\res: \Omega^{C_p}_\ast\to MU_\ast$, which is given by forgetting the action on a $C_p$-manifold. Using the relations \eqref{eq:24}, and \eqref{25}, as well as the fact that $\res(q_1)=p$, we get 
\begin{align*}
    p\cdot \res(d_{l,j}^{(i)}) & =pt_{l,j}^{(i)} \\
    p\cdot \res(q_k) & =pc_k.
\end{align*}
Since $MU_\ast$ is a domain, we obtain
\begin{align*}
    \res(d_{l,j}^{(i)})& =t_{l,j}^{(i)}\\
    \res(q_k) & = c_k.
\end{align*}
We have the following.
\begin{prop}\label{prop:gamma}
    There is an additive operation $\Gamma: \Omega^{C_p}_\ast \to \Omega_{\ast+2}^{C_p}$ given by
    \begin{align*}
        \Gamma(q_j) & =\res(q_j)(d_{0,0}^{(p-1)}-q_2)+q_{j+1}\\
        \Gamma(d_{l,j}^{(i)})& =\res(d_{l,j}^{(i)})(d_{0,0}^{(p-1)}-q_2)+d_{l,j+1}^{(i)} \\
        \Gamma(d_{I,j}) & = \res(d_{I,j})(d_{0,0}^{(p-1)}-q_2)+d_{l_1,1}^{(i_1)}d_{l_{2},0}^{(i_{2})}\cdots d_{l_{n-1},0}^{(i_{n-1})}d_{l_n,j}^{(i_n)}+\res(d_{l_1,0}^{(i_1)})d_{l_2,1}^{(i_1)}\cdots d_{l_{n-1},0}^{(i_{n-1})}d_{l_n,j}^{(i_n)}+ \\
        & \res(d_{l_1,0}^{(i_1)}d_{l_2,0}^{(i_2)})d_{l_3,1}^{(i_1)}\cdots d_{l_n,j}^{(i_n)}+\cdots + \res(d_{l_1,0}^{(i_1)}d_{l_2,0}^{(i_2)}\cdots d_{l_{n-1},0}^{(i_{n-1})}) d_{l_n,j+1}^{(i_n)}
    \end{align*}
where  $d_{I,j}=d_{l_1,0}^{(i_1)}\cdots d_{l_{n-1},0}^{(i_{n-1})}d_{l_n,j}^{(i_n)}$. For $x\in \Omega^{C_p}_\ast$, this operation satisfies 
\[\kappa(\Gamma(x))=u^{-1}\kappa(x)+\res(x)u^{-1}_{p-1}.\]
\end{prop}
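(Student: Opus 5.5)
Since $\kappa$ restricted to $\Omega^{C_p}_\ast$ has kernel $MU_\ast\{q_1\}$ (Remark \ref{remark:kernel}), and Corollary \ref{corolario} gives an $MU_\ast$-basis $\{q_k,d_{I,j}\}$, the plan is to define $\Gamma$ as the $MU_\ast$-linear map given on this basis by the stated formulas, and then verify the identity $\kappa(\Gamma(x))=u^{-1}\kappa(x)+\res(x)u^{-1}_{p-1}$ on basis elements. Both sides of the identity are $MU_\ast$-linear in $x$, because $\kappa$ and $\res$ are $MU_\ast$-linear. So checking the identity on the basis suffices, and additivity holds by construction. We also need the right-hand sides to lie in $\Omega^{C_p}_\ast$. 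This is automatic, since they are polynomials in the generators $q_j,d_{l,j}^{(i)}$ (Lemma \ref{lema:generated}). For $q_1$ the formula reads $\Gamma(q_1)=p(d_{0,0}^{(p-1)}-q_2)+q_2$.

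First, compute $\kappa(d_{0,0}^{(p-1)}-q_2)$. By Lemma \ref{lema:localization-R}, $\kappa(d_{0,0}^{(p-1)})=u_{p-1}^{-1}-t_{0,-1}^{(p-1)}u^{-1}=u_{p-1}^{-1}-(p-1)^{-1}u^{-1}=u_{p-1}^{-1}-(p-1)u^{-1}$, and $\kappa(q_2)=-c_1u^{-1}=-pu^{-1}$. Hence $\kappa(d_{0,0}^{(p-1)}-q_2)=u_{p-1}^{-1}+u^{-1}$.

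Next, from the formulas in Lemma \ref{lema:localization-R},
\[\kappa(q_{j+1})=u^{-1}\kappa(q_j)-c_ju^{-1},\qquad \kappa(d_{l,j+1}^{(i)})=u^{-1}\kappa(d_{l,j}^{(i)})-t_{l,j}^{(i)}u^{-1}.\]
We also have $\res(q_j)=c_j$ and $\res(d_{l,j}^{(i)})=t^{(i)}_{l,j}$, as computed just before the proposition. Substituting gives
\[\kappa(\Gamma(q_j))=c_j(u_{p-1}^{-1}+u^{-1})+u^{-1}\kappa(q_j)-c_ju^{-1}=u^{-1}\kappa(q_j)+\res(q_j)u_{p-1}^{-1}.\]
The same computation handles $d_{l,j}^{(i)}$.

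For products $d_{I,j}$ we need a Leibniz-type rule. The key observation is that if $x,y$ satisfy the identity with operations $\Gamma(x),\Gamma(y)$, then $\Gamma(xy):=\Gamma(x)y+\res(x)\Gamma(y)-\res(x)\res(y)(d^{(p-1)}_{0,0}-q_2)$ also satisfies it. Indeed,
\[\kappa(\Gamma(x))\kappa(y)+\res(x)\kappa(\Gamma(y))-\res(x)\res(y)(u^{-1}_{p-1}+u^{-1})\]
equals
\[u^{-1}\kappa(x)\kappa(y)+\res(x)\res(y)u^{-1}_{p-1}+\res(x)u^{-1}\kappa(y)-\res(x)\res(y)u^{-1}.\]
Using $\res(x)=\res(y)=\ldots$ and $\kappa(y)$ this does not quite collapse: the leftover terms are $\res(x)u^{-1}(\kappa(y)-\res(y))$. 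So the correct product rule must be adjusted. The cleaner route is to set $\Gamma'(x)=\Gamma(x)-\res(x)(d_{0,0}^{(p-1)}-q_2)$. Then the required identity becomes $\kappa(\Gamma'(x))=u^{-1}(\kappa(x)-\res(x))$. Applying $\Gamma'$ to a product telescopes:
\[xy-\res(xy)=(x-\res x)y+\res(x)(y-\res y).\]
Iterating this over the factors $d_{l_1,0}^{(i_1)},\dots,d_{l_n,j}^{(i_n)}$ gives exactly the stated sum, in which the $k$-th factor gets $\Gamma'$ (raising its $j$-index by one), the earlier factors are replaced by their restrictions, and the later factors are kept. The needed identity for a single generator was already verified above.

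The main obstacle is making sure the formula is compatible with the chosen basis normal form. The telescoping argument is purely formal in $\kappa$-images, so the identity holds for the stated expression. Well-definedness is then only an issue on $\ker\kappa=MU_\ast\{q_1\}$, and it is sidestepped by defining $\Gamma$ directly on the free basis of Corollary \ref{corolario}.
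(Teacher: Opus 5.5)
Your proposal is correct and is essentially the paper's proof: you define $\Gamma$ $MU_\ast$-linearly on the free basis of Corollary \ref{corolario}, verify the identity on $q_j$ and $d_{l,j}^{(i)}$ via $\kappa(d_{0,0}^{(p-1)}-q_2)=u_{p-1}^{-1}+u^{-1}$ and the recursions for $\kappa(q_{j+1})$ and $\kappa(d_{l,j+1}^{(i)})$, and handle $d_{I,j}$ by telescoping. Your normalization $\Gamma'(x)=\Gamma(x)-\res(x)(d_{0,0}^{(p-1)}-q_2)$ with $\kappa(\Gamma'(x))=u^{-1}(\kappa(x)-\res(x))$ organizes that telescoping more cleanly than the paper's term-by-term expansion, and the abandoned first product rule, whose expansion also contains a slip, should simply be deleted. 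One caveat is inherited from the paper: the unit $1$ is missing from the basis in Corollary \ref{corolario} (in degree $0$ that span is only $MU_0\,q_1$, while $\kappa(1)=1\neq 0=\kappa(q_1)$), so you must also set $\Gamma(1)=d_{0,0}^{(p-1)}-q_2$, which is the empty-product case $\Gamma'(1)=0$ of your formula and does satisfy the identity.
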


\begin{proof}
By Corollary \ref{corolario}, $\Omega^{C_p}_\ast$ is a free $MU_\ast$-module with basis given by the elements $q_j$ and the monomials $d_{I,j}$. Therefore, defining $\Gamma$ on these elements is enough to obtain an additive operation. It is also enough to check the desired property on $q_j$ and $d_{I,j}$. For $q_j$ we have 
\begin{align*}
 \kappa(\Gamma(q_j)) & = \res(q_j)(\kappa(d_{0,0}^{(p-1)})-\kappa(q_2))+\kappa(q_{j+1}) \\
& = c_j(u^{-1}_{p-1}-(p-1)u^{-1}+pu^{-1})-\sum_{k=1}^{j+1}c_{j+1-k}u^{-k}\\
&= c_ju^{-1}_{p-1}+c_ju^{-1}-c_ju^{-1}-u^{-1}\sum_{k=1}^j c_{j-k}u^{-k}\\
&= u^{-1}\kappa(q_j)+\res(q_j)u^{-1}_{p-1}.
\end{align*}
A similar argument shows the desired property for $d_{l,j}^{(i)}$. The key property is relation \eqref{eq:relations-d}), which gives
\[\kappa(d_{l,j+1}^{(i)})=-t_{l,j}^{(i)}u^{-1}+u^{-1}\kappa(d_{l,j}^{(i)}).\]
For $d_{I,j}$, we have
\begin{align*}
    \kappa(d_{I,j}) &= \res(d_{I,j})(u^{-1}_{p-1}+u^{-1})+\kappa(d_{l_1,1}^{(i_1)})\kappa(d_{l_{2},0}^{(i_{2})})\cdots \kappa(d_{l_{n-1},0}^{(i_{n-1})})\kappa(d_{l_n,j}^{(i_n)})\\
    &+ t_{l_1,0}^{(i_1)}\kappa(d_{l_2,1}^{(i_1)})\cdots \kappa(d_{l_n,j}^{(i_n)}) \\
    & +  t_{l_1,0}^{(i_1)}t_{l_2,0}^{(i_2)}\kappa(d_{l_3,1}^{(i_1)})\cdots \kappa(d_{l_n,j}^{(i_n)})\\
    & \ldots + t_{l_1,0}^{(i_1)}\cdots t_{l_{n-1},0}^{(i_{n-1})} \kappa(d_{l_n,j+1}^{(i_n)}) 
\end{align*}
\begin{align*}
    & = \res(d_{I,j})(u^{-1}_{p-1}+u^{-1})+(-u^{-1}t_{l_1,0}^{(i_1)}+u^{-1}\kappa(d_{l_1,0}^{(i_1)}))\kappa(d_{l_{2},0}^{(i_{2})})\cdots \kappa(d_{l_{n-1},0}^{(i_{n-1})})\kappa(d_{l_n,j}^{(i_n)})\\
    & + t_{l_1,0}^{(i_1)}(-u^{-1}t_{l_2,0}^{(i_2)}+u^{-1}\kappa(d_{l_2,0}^{(i_2)}))\cdots \kappa(d_{l_n,j}^{(i_n)})\\
    & + t_{l_1,0}^{(i_1)}t_{l_2,0}^{(i_2)}(-u^{-1}t_{l_3,0}^{(i_3)}+u^{-1}\kappa(d_{l_3,0}^{(i_3)}))\cdots \kappa(d_{l_n,j}^{(i_n)})\\
    & \ldots + t_{l_1,0}^{(i_1)}\cdots t_{l_{n-1},0}^{(i_{n-1})} (-u^{-1}t_{l_n,j}^{(i_n)}+u^{-1}\kappa(d_{l_n,j}^{(i_n)})) 
\end{align*}
\begin{align*}
    & = \res(d_{I,j})u^{-1}_{p-1}+u^{-1}\kappa(d_{I,j})+ \res(d_{I,j})u^{-1}-u^{-1}t_{l_1,0}^{(i_1)}\kappa(d_{l_{2},0}^{(i_{2})})\cdots \kappa(d_{l_{n-1},0}^{(i_{n-1})})\kappa(d_{l_n,j}^{(i_n)})\\
    & + u^{-1}t_{l_1,0}^{(i_1)}\kappa(d_{l_2,0}^{(i_2)})\cdots \kappa(d_{l_n,j}^{(i_n)})-u^{-1}t_{l_1,0}^{(i_1)}t_{l_2,0}^{(i_2)}\kappa(d_{l_3,0}^{(i_3)})\cdots \kappa(d_{l_n,j}^{(i_n)})\\
    & + u^{-1}t_{l_1,0}^{(i_1)}t_{l_2,0}^{(i_2)}\kappa(d_{l_3,0}^{(i_2)})\cdots \kappa(d_{l_n,j}^{(i_n)})-u^{-1}t_{l_1,0}^{(i_1)}t_{l_2,0}^{(i_2)}t_{l_3,0}^{(i_3)}\kappa(d_{l_4,0}^{(i_4)})\cdots \kappa(d_{l_n,j}^{(i_n)})\\
    & \ldots  + u^{-1} t_{l_1,0}^{(i_1)}\cdots t_{l_{n-1},0}^{(i_{n-1})}\kappa(d_{l_n,j}^{(i_n)})-\res(d_{I,j})u^{-1}
\end{align*}
\begin{align*}
    &= u^{-1}\kappa(d_{I,j})+\res(d_{I,j})u^{-1}_{p-1}.
\end{align*}
\end{proof}

The operation $\Gamma$ in Lemma \ref{lema:kos} is defined for classes $[M]\in \Omega^{C_p}_\ast$ which admit a representative $M$ whose $C_p$-action extends to an $S^1$-action. On the other hand, the operation $\Gamma$ in the previous proposition is defined for all $x\in \Omega^{C_p}_\ast$. Since both operations have the same image under the geometric fixed points map, they coincide module $q_1$ whenever the geometric operation is defined. Together with Lemma \ref{lemma:last}, the previous proposition gives interpretations, in terms of our generators, of the manifolds in Theorem \ref{teo-geo-generators}.
   
\bibliographystyle{amsxport}
\bibliography{references}
\end{document}